
\documentclass[12pt]{amsart}
\usepackage[dvips]{epsfig}
\usepackage{graphics}
\usepackage{latexsym}
\usepackage{verbatim}
\usepackage{amsmath}
\usepackage{amsthm}
\usepackage{amssymb}
\usepackage{hyperref}
\setcounter{tocdepth}{1}


\newtheorem{theorem}{Theorem}[section]
\newtheorem{lemma}[theorem]{Lemma}
\newtheorem{corollary}[theorem]{Corollary}
\newtheorem{proposition}[theorem]{Proposition}

\def\Remark{\medskip\noindent{\bf Remark: }}
\def\Remarks{\medskip\noindent{\bf Remarks: }}



\setlength{\oddsidemargin}{.5cm} \setlength{\evensidemargin}{.5cm}
\setlength{\textwidth}{15cm} \setlength{\textheight}{19.5cm}
\setlength{\topmargin}{1cm}


\newcommand{\ens}[1]{\mathbb{#1}}

\newcommand{\N}{\mathbb{N}}
\newcommand{\NN}{\mathbb{N}}
\newcommand{\Z}{\mathbb{Z}}
\newcommand{\R}{\mathbb{R}}

\newcommand{\C}{\mathcal{C}}
\newcommand{\PP}{\mathbb{P}}

\def\cal{\mathcal}

\def\supp{\mbox{supp }}

\def\derpar#1#2{\frac{\partial#1}{\partial#2}}
\def\var{\varepsilon}


\begin{document}

\title[Analysis of spectral methods for the Boltzmann equation]
{Analysis of spectral methods for the homogeneous Boltzmann equation}

\author{Francis Filbet and Cl\'ement Mouhot}

\hyphenation{bounda-ry rea-so-na-ble be-ha-vior pro-per-ties
cha-rac-te-ris-tic}

\maketitle

\begin{abstract} 
  The development of accurate and fast algorithms for the Boltzmann
  collision integral and their analysis represent a challenging
  problem in scientific computing and numerical analysis.  Recently,
  several works were devoted to the derivation of spectrally accurate
  schemes for the Boltzmann equation, but very few of them were
  concerned with the stability analysis of the method. In particular
  there was no result of stability except when the method is modified
  in order to enforce the posivity preservation, which destroys the
  spectral accuracy. In this paper we propose a new method to study
  the stability of homogeneous Boltzmann equations perturbed by
  smoothed balanced operators which do not preserve positivity of the
  distribution. This method takes advantage of the ``spreading''
  property of the collision, together with estimates on regularity and
  entropy production. As an application we prove stability and
  convergence of spectral methods for the Boltzmann equation, when the
  discretization parameter is large enough (with explicit bound).
\end{abstract}

\medskip
\noindent
{\bf Keywords.} Boltzmann equation; spectral methods; numerical
stability; asymptotic stability; Fourier-Galerkin method.

\medskip

\noindent
{\bf AMS Subject Classification.} 82C40, 65M70, 76P05.

\tableofcontents

\section{Introduction}\label{sec:intro}
\setcounter{equation}{0} This work is the sequel of several papers
devoted to the approximation of the Boltzmann equation using fast
spectral methods \cite{PaRu:spec:00,FiRu:FBE:03,MP:03, FMP}. The
present paper is devoted to the stability and convergence analysis of
general spectral algorithms.
 
In a microscopic description of rarefied gas, the particles move by a
constant velocity until they undergo binary collisions. In statistical
physics, the properties of the gas are described by a density function
in phase space, $f(t,x,v)$, called the {\em distribution function},
which gives the fraction of particles per unit volume in phase space
at time $t$. The distribution function satisfies the Boltzmann
equation, a non-linear integro-differential equation, which describes
the combined effect of the free flow and binary collisions between the
particles.

The main difficulty in the approximation of the Boltzmann equation is
due to the multidimensional structure of the collisional integral,
since the integration runs on a highly-dimensional unflat manifold. In
addition the numerical integration requires great care since the
collision integral is at the basis of the macroscopic properties of
the equation. Further difficulties are represented by the presence of
stiffness, like the case of small mean free path or the case of large
velocities~\cite{FiPa:02}.

For such reasons realistic numerical simulations are based on
Monte-Carlo techniques. The most famous examples are the {Direct
Simulation Monte-Carlo (DSMC)} methods by Bird~\cite{bird} and by
Nanbu~\cite{Na}. These methods guarantee efficiency and
preservation of the main physical properties. However, avoiding
statistical fluctuations in the results becomes extremely
expensive in presence of non-stationary flows or close to
continuum regimes.

Among deterministic approximations, Discrete Velocity Models (DVM) are
based on a cartesian grid in velocity and on a discrete collision
mechanism on the points of the grid that preserves the main physical
properties. Unfortunately DVM are not competitive with Monte-Carlo
methods in terms of computational cost and their accuracy seems to be
less than first
order~\cite{BoPaSc:cons:97,PaSc:stabcvgDVM:98,HePa:DVM:02,Wenn2d}.

Another class of numerical methods, based on the use of spectral
techniques in the velocity space, has been developed. The methods were
first derived in~\cite{PePa:96}, inspired from spectral methods in
fluid mechanics~\cite{CHQ:88} and by previous works on the use of
Fourier transform techniques for the Boltzmann equation
\cite{Boby:88}.  They are based on approximating in the velocity space
the distribution function by a periodic function, and on its
representation by Fourier series.
The spectral method has been further developed in
\cite{PaRu:spec:00,PaRu:stab:00} where evolution equations for the
Fourier modes were explicitly derived and spectral accuracy of the
method was proven.  Strictly speaking these methods are not
conservative, since they only preserve mass, whereas momentum and
energy are approximated with spectral accuracy.  Moreover, the
spectral method has been applied also to non homogeneous
situations~\cite{FiRu:FBE:03}, to the Landau equation~\cite{FiPa:02,
  PaRuTo:00}, where fast algorithms can be readily derived, and to the
case of granular gases~\cite{FiPa:03}.  Independantly A.~Bobylev \&
S.~Rjasanow \cite{BoRj:HS:97,BoRj:HS:99} have also constructed fast
algorithms based on a Fourier transform approximation of the
distribution function, but the method is not spectrally accurate (only
second order).

In \cite{MP:03} a fast spectral method was proposed for a class of
particle interactions including pseudo-Maxwell molecules in dimension
$2$ and, most importantly, hard spheres in dimension $3$, on the basis
of the previous spectral method together with a suitable
semi-discretization of the collision operator. This method permits to
reduce the computational cost from $O(n^2)$ to $O(n \log_2 n)$ without
loosing the spectral accuracy, thus making the method competitive with
Monte-Carlo.

However an important drawback of the spectral methods up to now had
been the lack of proof of stability. Indeed as compared to discrete
velocity methods the difficulties are somehow opposite: consistency
results are easily obtained, whereas the lack of positivity
preservation of the scheme is a major issue when one studies its
stability properties. The only paper concerned with the issue of
stability for spectral methods applied to the Boltzmann collision
operator is \cite{PaRu:stab:00}, but in the latter the author
introduce some ``filters'' on the Fourier modes in order to restore
the posivity-preservation of the scheme, which breaks the spectral
accuracy.

In this paper we give the first stability result for the spectral
methods applied to the Boltzmann collision operator. Moreover we
propose a method which is likely to have other utilizations in
collisional kinetic theory:
\begin{itemize} 
\item we write the Galerkin approximation on the first $N$ Fourier
  modes of the evolution equation as a {\em a smooth balanced
    perturbation} of the original equation, in the sense of a
  perturbation by some small and mass-preservation (although not
  positivity-preserving) error term;
\item we prove existence and uniqueness of smooth solution for small
  times, conditionally to a bound on the $L^1$ norm;
\item we use the mixing structure \cite{a44,M:04} of the collision
  process to show appearance of positivity after a small time
  (depending on the size of the box of truncation and the
  approximation parameter $N$);
\item we use the mass conservation to deduce uniform bounds on the
  $L^1$ norm, and therefore regularity bounds growing at most
  exponentially in time;
\item we perform a detailed analysis of the unperturbed truncated
  problem, showing uniform in time regularity and asymptotic
  convergence to equilibrium;
\item finally we use that the equilibrium is unchanged by the smooth
  balanced perturbation, and that it is non-linearly stable for the
  perturbed periodized Boltzmann equation, in order to prove global in
  time stability and convergence to equilibrium for the perturbed
  Boltzmann equation (we connect the previous point for initial times
  together with the stability of equilibrium for asymptotic times).
\end{itemize} 

Hence our paper introduce a general method on how to exploit fine
mixing properties of the collision process in the study of stability
of a particular class of perturbed Boltzmann equation, with the
application in mind to the stability of spectral methods.

The outline of this paper is as follows. The Boltzmann equation and
its basic features are presented in Section~\ref{sec2}. In
Section~\ref{sec3} we explain the truncation and periodization
associated with spectral methods and fast spectral methods and we
formulate the problem of stability of these methods in the general
framework of the stability properties of the Boltzmann equation with
respect to a smooth balanced perturbation. Section~\ref{sec4} is
devoted to the proof of the main stability result in the general
framework.  Section~\ref{sec5} is devoted to the study of the
asymptotic behavior of the truncated problem on the basis of the
entropy production theory. Finally in Section~\ref{sec6} we apply the
latter result to the spectral method and establish some stability and
convergence results of the numerical solution.

\section{The Boltzmann equation}\label{sec2}

The Boltzmann equation describes the behavior of a dilute gas of
particles when the only interactions taken into account are binary
elastic collisions. It reads for $x \in \Omega$, $v \in \R^d$ where
$\Omega \in \R^d$ is the spatial domain ($d \ge 2$):
 \begin{equation*}
 \derpar{f}{t} + v \cdot \nabla_x f = Q(f,f)
 \end{equation*}
where $f:=f(t,x,v)$ is the time-dependent particles distribution
function in the phase space. The Boltzmann collision operator $Q$
is a quadratic operator local in $(t,x)$. The time $t$ and position $x$
act only as parameters in $Q$ and therefore will be omitted in
its description
 \begin{equation} \label{eq:Q}
 Q (f,f)(v) = \int_{v_\star \in \R^d}
 \int_{\sigma \in \ens{S}^{d-1}}  B(|v-v_\star|, \cos \theta) \,
 \left( f^\prime_\star f^\prime - f_\star f \right) \, d\sigma \, dv_\star.
 \end{equation}
We used the shorthand $f = f(v)$, $f_\star = f(v_\star)$,
$f^\prime = f(v^\prime)$, $f_\star^\prime = f(v_\star^\prime)$. The
velocities of the colliding pairs $(v,v_\star)$ and
$(v^\prime,v^\prime_\star)$ are related by
 \begin{equation*}
\left\{
\begin{array}{l}
\displaystyle{ v^\prime   = v - \frac{1}{2} \big((v-v_\star) 
- |v-v_\star|\,\sigma\big),} 
\vspace{0.2cm} \\
\displaystyle{ v^\prime_\star = v - \frac{1}{2} \big((v-v_\star) + |v-v_\star|\,\sigma\big),}
\end{array}\right.
 \end{equation*}
 with $\sigma\in \ens{S}^{d-1}$.  The collision kernel $B$ is a
 non-negative function which by physical arguments of invariance only
 depends on $|v-v_\star|$ and $\cos \theta = {u} \cdot \sigma$ (where
 ${u} = (v-v_\star)/|v-v_\star|$ is the normalized relative
 velocity). In this work we are concerned with {\em short-range
   interaction} models.  More precisely we assume that $B$ is locally
 integrable. Here are the hypothesis on the collision kernel:
\begin{equation}
\label{HSkernel}
    B(|u|, \cos \theta) = \Phi(|u|)\, b(\cos\theta),
    \end{equation}
with 
\begin{equation} \label{hyp:phi} 
\Phi(z) = z^\gamma, \quad z \in \R_+, \quad \mbox{ for some } 
\gamma \in (0,+\infty)
\end{equation}
and $b$ smooth such that
\begin{equation}
\label{hyp:theta}
\int_0^\pi b(\cos\theta) \sin^{d-2}\theta \,d\theta < +\infty.
\end{equation}
These assumptions are satisfied for the so-called {\em hard spheres
  model} $B=|v-v_*|$, and it is known as {\em Grad's angular cutoff
  assumption} when it is (artificially) extended to interactions
deriving from a power-law potentials. As an important benchmark model
for the numerical simulation we therefore introduce the so-called {\em
  variable hard spheres model} (VHS), which writes
    \begin{equation*}
    B(|u|, \cos \theta) = C_\gamma \, |u|^\gamma,
    \end{equation*}
for some $\gamma \in (0,1]$ and a constant $C_\gamma >0$.

For this class of models, one can split the collision operator as
$$
Q(f,f)\,=\,Q^+(f,f)\,-\,L(f)\,f,
$$
with
\begin{equation*}
Q^{+}(f,f) \,=\, \int_{\R^d} \int_{\ens{S}^{d-1}} B(\vert v-v_\star \vert, \cos
\theta) f^\prime f^\prime _\star \,d\sigma \,dv_\star, 
\end{equation*}
and
\begin{equation*}
L(f) = \int_{\R^d} \int_{S^{d-1}} B(\vert v-v_\star \vert, \cos
\theta) f_\star \,d\sigma\,dv_\star. 
\end{equation*}

Boltzmann's collision operator has the fundamental properties of
conserving mass, momentum and energy: at the formal level
 \begin{equation*}
 \int_{{\R}^d}Q(f,f) \, \phi(v)\,dv = 0, \qquad
 \phi(v)=1,v,|v|^2,
 \end{equation*}
and it satisfies well-known Boltzmann's $H$ theorem
 \begin{equation*}
 - \frac{d}{dt} \int_{{\R}^d} f \log f \, dv = - \int_{{\R}^d} Q(f,f)\log(f) \, dv \geq 0.
 \end{equation*}
The functional $- \int f \log f$ is the {\em entropy} of the
solution. Boltzmann's $H$ theorem implies that any equilibrium
distribution function, {\em i.e.}, any function which is a maximum of the
entropy, has the form of a locally Maxwellian distribution
 \begin{equation*}
 M(\rho,u,T)(v)=\frac{\rho}{(2\pi T)^{d/2}}
 \exp \left( - \frac{\vert u - v \vert^2} {2T} \right), 
 \end{equation*}
where $\rho,\,u,\,T$ are the {\em density}, {\em macroscopic velocity}
and {\em temperature} of the gas, defined by
 \begin{equation*}
 \rho = \int_{v\in{\R}^d}f(v)\,dv, \quad u =
 \frac{1}{\rho}\int_{v\in{\R}^d}v\,f(v)\,dv, \quad T = {1\over{d\rho}}
 \int_{v\in{\R}^d}\vert u - v \vert^2\,f(v)\,dv.
 \end{equation*}
For further details on the physical background and derivation of
the Boltzmann equation we refer to Cercignani, Illner, Pulvirenti~\cite{Cerc}
and Villani~\cite{Vill:hand}.

\section{Formulation of a general stability result} \label{sec3}
\setcounter{equation}{0}

In this section we remind the basic principles leading to the
periodized truncations of the Boltzmann collision operator arising in
spectral methods. Then, we present the main result of this paper: the
stability of the spatially homogeneous Boltzmann equation with respect
to a smooth balanced perturbation, preserving mass and smoothness but
not nonnegativity of the solution.  This stability means that we are
able to construct global solutions and estimate the error between
perturbed and unperturbed solutions.
 
Any deterministic numerical method requires to work on a {\it bounded}
velocity space.  This therefore supposes a non physical truncation
(associated with limit conditions) of this velocity space, which we
shall discuss below. 

\subsection{General framework}
We consider the spatially homogeneous Boltzmann equation written in
the following general form
\begin{equation}
\frac{\partial f}{\partial t} = Q(f,f),
\label{eq:HOM}
\end{equation}
where $Q(f,f)$ is given by
\begin{equation}
Q(f,f) = \int_{\mathcal{C}} \mathcal{B}(y,z)\, \big[
f^\prime  f_\star^\prime - f_\star f \big] \,dy \,dz,\quad v\in\mathbb{R}^d
\label{eq:Qgen}
\end{equation}
with
 \begin{equation*}
 v^\prime   = v + \Theta^\prime(y,z), \qquad
 v^\prime_\star = v + \Theta^\prime_\star(y,z),  \qquad  v_\star = v +
 \Theta_\star(y,z).
 \end{equation*}

In the equations above, $\mathcal{C}$ is some given (unbounded)
domain for $y,z$, and $\Theta$, $\Theta^\prime$, $\Theta^\prime _\star$ are suitable
functions, to be defined later. This general framework emphasizes
the translation invariance property of the collision operator,
which is crucial for the spectral methods. We will be more precise in the
next paragraphs for some changes of variables allowing to reduce
the classical operator (\ref{eq:Q}) to the form (\ref{eq:Qgen}).

A problem associated with deterministic methods which use a fixed
discretization in the velocity domain is that the velocity space is
approximated by a finite region. Physically the domain for the
velocity is $\R^d$, and the property of having compact support is not
preserved by the collision operator. In general the collision process
indeed spreads the support by a factor $\sqrt{2}$ in the elastic case
(see \cite{a44,M:04} and also \cite{MM:04} for similar properties in
the inelastic case). As a consequence, for the continuous equation in
time, the function $f$ is immediately positive in the whole domain
$\R^d$. Thus, at the numerical level, some non physical condition has
to be imposed to keep the support uniformly bounded. In order to do
this there are two main strategies:

\begin{itemize}
\item One can remove the physical binary collisions that will lead
  outside the bounded velocity domains. This means a possible increase
  of the number of {\em local invariants}, {\em i.e.}, the functions
  $\varphi$ such that
  $$
  (\varphi^\prime_\star  \,+\, \varphi^\prime \,-\,\varphi_\star \,-\, \varphi)
  $$ 
  is zero everywhere on the domain. If this is done properly ({\it
    i.e.}, without removing too many collisions), the scheme remains
  conservative and without spurious invariants.  However, this
  truncation breaks down the convolution-like structure of the
  collision operator, which requires the translation invariance in
  velocity. Indeed the modified collision kernel depends on $v$
  through the boundary conditions. This truncation is the starting
  point of most schemes based on Discrete Velocity Models. 

\item One can add some non physical binary collisions by {\em
    periodizing} the function and the collision operator. This implies
  the loss of some local invariants (some non physical collisions are
  added). Thus the scheme is not conservative anymore, although it
  still preserves the mass if the periodization is done
  carefully. However in this way the structural properties of the
  collision operator are maintained and thus they can be exploited to
  derive fast algorithms. This periodization is the basis of spectral
  methods.
\end{itemize}

Therefore, we consider the space homogeneous Boltzmann equation in a
bounded domain in velocity $\mathcal{D}_L = [-L,L]^d$
($0<L<\infty$). We truncate the integration in $y$ and $z$ in
\eqref{eq:Qgen} since periodization would yield infinite result if
not: we set $y$ and $z$ to belong to some truncated domain
$\mathcal{C}_R \subset \mathcal{C}$ (the parameter $R$ refers to its
size and will be defined later).
For a compactly supported function with support included in $B_S$, the
ball centered at $0$ with radius $S>0$, one has to prescribe suitable
relations (depending on the precise change of variable and truncation
chosen) between $S$, $R$ and $L$ in order to retain all possible
collisions and at the same time prevent intersections of the regions
where $f$ is different from zero (this is the so-called {\em
  dealiasing condition}). Then the {\em truncated} collision operator
reads
\begin{equation}\label{eq:HOMtruncat}
Q^R(f,f) = \int_{\mathcal{C}_R} \mathcal{B}(y,z) \, 
\big( f^\prime_\star\,f^\prime \,-\, f_\star\,f \big) \,dy \, dz
\end{equation}
for $v \in \mathcal{D}_L$ (the expression for $v \in\R^d$ is deduced
by periodization).
By making some changes of variable on $v$, one can easily prove for
the two choices of variables $y,z$ of the next subsections, that for
any function $\varphi$ periodic on $\mathcal{D}_L$ the following weak
form is satisfied:
\begin{equation} \label{eq:QRweak}
\int_{\mathcal{D}_L} Q^R(f,f) \,\varphi(v) \,dv =
\frac{1}{4}\int_{\mathcal{D}_L}\int_{\mathcal{C}_R}
\mathcal{B}(y,z) \,f_\star\, f \, 
\left( \varphi^\prime_\star + \varphi^\prime - \varphi_\star - \varphi  \right)\, dy \,dz \, dv.
\end{equation}

Now, we use the representation $Q^R$ to derive spectral methods.
Hereafter, we use just one index to denote the $d$-dimensional sums
with respect to the vector $k=(k_1,..,k_d)\in \Z^d$, hence we set
$$
\sum_{k=-N}^N := \sum_{k_1,\dots,k_d=-N}^N.
$$
The approximate function $f_N$ is represented as the truncated Fourier
series
\begin{equation}
f_N(v) = \sum_{k=-N}^N \hat{f}_k \, e^{i \frac{\pi}L k \cdot v},
\label{eq:FU}
\end{equation}
with
$$
\hat{f}_k = \frac{1}{(2 L)^d}\int_{\mathcal{D}_L} f(v) \, 
e^{-i \frac{\pi}L k \cdot v }\,dv.
$$
In a Fourier-Galerkin method the fundamental unknowns are the
coefficients $\hat{f}_k(t)$, $\,k=-N,\ldots,N$. We obtain a set of
ODEs for the coefficients $\hat{f}_k$ by requiring that the residual
of (\ref{eq:HOMtruncat}) be orthogonal to all trigonometric
polynomials of degree less than $N$.  Hence for $k=-N,\ldots,N$
\begin{equation}
\int_{\mathcal{D}_L}
\left(\frac{\partial f_N}{\partial t} - Q^R(f_N,f_N)
\right)
e^{-i \frac{\pi}L k \cdot v}\,dv = 0.
\label{eq:VAR}
\end{equation}
By substituting expression (\ref{eq:FU}) in (\ref{eq:QRweak}) we get
$$
Q^{R}(f_N,f_N) = Q^{R,+}(f_N,f_N) -L^R(f_N)\, f_N
$$
with
\begin{eqnarray}
\label{qm}
L^R(f_N)\, f_N &=& \sum_{l=-N}^N\,\sum_{m=-N}^N \beta (m,m)\,
\hat{f}_l\,\hat{f}_m  \, e^{i \frac{\pi}L (l+m)
  \cdot v},
\\
\label{qp}
Q^{R,+}(f_N,f_N) &=& \sum_{l=-N}^N\,\sum_{m=-N}^N \beta (l,m)\, 
\hat{f}_l\,\hat{f}_m \, e^{i \frac{\pi}L
(l+m) \cdot v},
\end{eqnarray}
where 
\begin{equation}
\label{beta}
\beta (l,m) = \int_{\mathcal{C}_R}
\mathcal{B}(y,z) e^{i \frac{\pi}L \big(l\cdot \Theta^\prime(y,z) + m\cdot \Theta_\star^\prime(y,z)\big)}  \,dy \,dz.
\end{equation}

The {\em spectral equation} is the projection of the collision
equation in $\PP_N$, the $(2N + 1)^d$-dimensional vector space of
trigonometric polynomials of degree at most $N$ in each direction,
{\it i.e.},
$$
\frac{\partial f_N}{\partial t} =\mathcal{P}_N\,Q^R(f_N,f_N),
$$
where $\mathcal{P}_N$ denotes the orthogonal projection on $\PP_N$ in
$L^2(\mathcal{D}_L)$.

\subsection{The truncation associated with classical spectral methods}

In the classical spectral method \cite{PaRu:spec:00}, a simple change
of variables in (\ref{eq:Q}) permits to write
\begin{equation}
Q(f,f) = \int_{\R^d} \int_{S^{d-1}}
\mathcal{B}^{class} (g, \omega)\big( f (v^\prime) f (v_\star^\prime)- f (v) f(v_\star) \big)  \,d\omega\,dg,
\label{eq:G}
\end{equation}
with $g = v - v_\star \in \R^d$, $\omega \in \ens{S}^{d-1}$, and
\begin{equation}
\left\{
\begin{array}{l}
v^{\prime} = v - \frac{1}{2}(g-\vert g\vert \omega ), \vspace{0.3cm} \\
v_\star^{\prime} = v - \frac{1}{2}(g+\vert g\vert \omega), \vspace{0.3cm} \\
v_\star = v +g.
\end{array}
\right.
\label{eq:VV2}
\end{equation}
Then, we set $ \mathcal{C} := \R^d \times \ens{S}^{d-1}$ and
$$
\Theta^\prime(g,\omega) :=  - \frac{1}{2}(g-\vert g\vert \omega ), \quad  
\Theta^\prime _\star(g,\omega) := - \frac{1}{2}(g+\vert g\vert \omega), \quad
\Theta_\star(g,\omega) := g.
$$
Finally the collision kernel $\mathcal{B}^{class}$ is defined by
\begin{equation}\label{eq:defBclassic}
\mathcal{B}^{class} (g,\omega) = 2^{d-1} \, \big( 1-(\hat{g} \cdot \omega) \big)^{d/2 -1}
B\big(|g|,2 (\hat{g}\cdot \omega)^2 -1 \big).
\end{equation}

Thus, the Boltzmann operator (\ref{eq:G}) is now written in the
form~(\ref{eq:Qgen}). 
We consider the bounded domain $\mathcal{D}_L = [-L,L ]^d$, ($0<L
<\infty$) for the distribution $f$, and the bounded domain
$\mathcal{C}_R = B_R \times \ens{S}^{d-1}$ for some $R>0$.  The
truncated operator reads in this case
\begin{equation}
\label{eq:QRclass}
Q^R(f,f)(v)=\int_{B_R\times \ens{S}^{d-1}} \mathcal{B}^{class}
(g,\omega) \,
\big( f(v'_*) f(v') - f(v_*) f(v) \big) \, d\omega \, dg.
\end{equation}


\subsection{The truncation associated with fast spectral methods}

Here we shall approximate the collision operator starting from a
representation which conserves more symmetries of the collision
operator when one truncates it in a bounded domain.  This
representation was used in \cite{BoRj:HS:97,ibraRj} to derive finite
differences schemes and it is close to the classical Carleman
representation (cf. \cite{carl}).  The basic identity we shall need is
(for $u \in \R^d$)
\begin{equation}
\label{form1}
\frac{1}{2}  \int_{S^{d-1}} F(|u|\sigma - u) \,d\sigma =
\frac{1}{|u|^{d-2}} \int_{\R^d} \delta(2 \, y\cdot u + |y|^2) \, F(y) \, dy.
\end{equation}
Using (\ref{form1}) the collision operator (\ref{eq:Q}) can be written
as
\begin{multline}
\label{eq:Qnew}
Q(f,f)(v) = 2^{d-1} \int_{x\in\R^d} \int_{y\in \R^d} \mathcal{B}^{fast} (y,
z) \,\delta(y \cdot z) \,
\\
\big( f(v + z) f (v + y)- f (v + y + z) f(v)\big) \, dy \, dz,
\end{multline}
with
$$
\mathcal{B}^{fast} (y, z)= 2^{d-1} \, B\left(|y+z|,
  -\frac{y\cdot(y+z)}{|y|\,|y+z|} \right) \,|y+z|^{-(d-2)}.
$$
Thus, the collision operator is now written in the form (\ref{eq:Qgen})
with $\mathcal{C} := \R^d \times \R^d$, 
$$
\mathcal{B}(y,z)=\mathcal{B}^{fast} (y,z)\,\delta(y\cdot z),
$$
and 
$$
v^\prime_\star = v + \Theta^\prime_\star(y,z),\quad v^\prime = v +
\Theta^\prime(y,z),\quad v_\star=v + \Theta_\star(y,z).
$$
with
$$
\Theta^\prime_\star(y,z) := z,\quad \Theta^\prime(y,z):= y,\quad
\Theta_\star(y,z):= y+z.
$$

Now we consider the bounded domain $\mathcal{D}_L = [-L,L ]^d$, ($0<L
<\infty$) for the distribution $f$, and the bounded domain
$\mathcal{C}_R = B_R \times B_R$ for some $R>0$. The (truncated)
operator now reads
\begin{equation}
\label{eq:QRfast}
Q^R(f,f)(v)=\int_{B_R\times B_R} \mathcal{B}^{fast} (y,z)\,\delta(y \cdot z)\,
\big( f(v+z) f(v+y) - f(v+y+z) f(v) \big) \, dy \, dz,
\end{equation}
for $v \in \mathcal{D}_L$. This representation of the collision kernel
yields better decoupling properties between the arguments of the
operator and allows to lower significantly the computation cost of the
method by using the fast Fourier transform (see~\cite{MP:03,FMP}).

Let us make a crucial remark about the choice of $R$. When $f$ has
support included in $B_S$, $S>0$, it is usual (see
\cite{PaRu:spec:00,MP:03}) to search for the minimal period $L$ (in
order to minimize the computational cost) which prevents interactions
between different periods of $f$ during {\em one} collision process.
From now on, we shall always assume that we can take $L$ and $R$ large
enough such that, when needed, $R \ge \sqrt 2 \, L$. Hence all the
torus is covered (at least once) in the integration of the variables
$(g,\omega)$ or $(y,z)$.



 
\subsection{A common abstract formulation for the stability of
  spectral methods}

From now on, $Q^R$ shall denote a periodized truncated collision
operator as in (\ref{eq:QRclass}) or (\ref{eq:QRfast}).  As we shall
see, using this formulations, both classical and fast spectral methods
fall into the following framework:
\begin{equation} \label{perturb}
\left\{
\begin{array}{l}
  \displaystyle{\frac{\partial f}{\partial t} = Q^R(f,f) + P_\var (f),  
   \quad v \in \mathcal{D}_L,} 
  \\
  \,
  \\
  f(0,v)  = f_{0,\var}(v), \quad v \in\mathcal{D}_L, 
\end{array}\right.
\end{equation}
where $P_\var$ is a ``smooth balanced perturbation'', which means that
it satisfies the following (balanced law)
\begin{equation}
\label{hyp1:P}
 \int_{\mathcal{D}_L} P_\var(f) \, dv = 0
\end{equation}
and preserves the smothness of the distribution function, {\it i.e.},
there exists constants $\C_0,\C_k>0$ such that
\begin{equation}
\label{hyp2:P}
\left\{
\begin{array}{l}
  \displaystyle{\| P_\var(f) \|_{L^1} \,\,\le\,\, \C_0 \, \|f \|_{L^1} \, \|f\|_{L^1}} 
  \\
  \,
  \\
  \displaystyle{\| P_\var(f) \|_{H^k_{per}} \,\,\le\,\, 
    \C_k \, \|f \|_{L^1} \, \|f\|_{H^k_{per}}}, \quad k \ge 0,
\end{array}\right.
\end{equation}
where $\| \cdot \|_{H^k_{per}}$ is the usual norm of the Sobolev space
of periodic functions $H^k_{per}(\mathcal{D}_L)$.

Moreover the perturbation is supposed to be small in the following
sense: there exists a function $\varphi(\var)$ such that for any $p
\ge 0$,
\begin{equation}
\label{hyp3:P}
\| P_\var(f) \|_{H^p_{per}}\,\, \le\,\, \varphi(\var),
\end{equation}
where $\varphi(\var)$ depends on $\|f\|_{H^{p+k}_{per}}$ for some
$k>0$, and goes to zero as $\var$ goes to zero.

Finally in order to prove global existence with uniform regularity
bounds, we shall require additionnal assumptions on the relation
between the equilibrium distributions of the perturbed and unperturbed
(periodized) Boltzmann equations, and about the stability of the
unperturbed equation (see the following statement).

Let us therefore write the unperturbed equation for reference: 
\begin{equation}
\label{unperturb}
\left\{
\begin{array}{l}
\displaystyle{\frac{\partial f}{\partial t} = Q^R(f,f),  \quad v \in \mathcal{D}_L,} 
\vspace{0.2cm} 
\\
f(0,v)  = f_0(v), \quad v \in\mathcal{D}_L.
\end{array}\right.
\end{equation}


Let us state the general stability theorem:

\begin{theorem} \label{th:existence} Let us consider a perturbed
  Boltzmann equation (\ref{perturb}) 
  in the torus $\mathcal{D}_L$, where $Q^R$ is defined by
  (\ref{eq:QRclass}) or (\ref{eq:QRfast}), and for a sequence of
  smooth balanced perturbations $\big(P_\var = P_\var(f)\big)_{\var >
    0}$ which satisfy~\eqref{hyp1:P}-\eqref{hyp2:P}-\eqref{hyp3:P}.

  Assume that the constant functions are equilibra of the pertubed
  equation \eqref{perturb} (as for equation \eqref{unperturb}) and
  that they are nonlinearly locally stable in any
  $H^k_{per}(\cal{D}_L)$ for equation \eqref{unperturb}.

  We assume that $f_0$ is a non-negative function, non zero
  everywhere, belonging to $H^k _{per}(\mathcal{D}_L)$ with $k\in\N$
  and $k > d/2$.  We consider a sequence of smooth balanced
  perturbations $f_{0,\var}$ of the initial datum for the perturbed
  problem \eqref{perturb} (non necessarily positive) such that
  $$
  \int_{\mathcal{D}_L} f_{0,\var} = \int_{\mathcal{D}_L} f_0 \quad
  \mbox{ and } \quad \| f_0 - f_{0,\var} \|_{H^k_{per}} \,\,\leq \,\, \psi(\var)
  $$
  with $\psi(\var)$ goes to zero when $\var$ goes to zero.
  
  Then, there exists $\var_0 >0$ depending only on the collision
  kernel $B$, the truncation $R$, the constants
  in~\eqref{hyp2:P}-\eqref{hyp3:P} for the perturbation, and the
  $L^1(\mathcal{D}_L)$ and $H^k_{per}(\mathcal{D}_L)$ norms on $f_0$,
  such that for any $\var \in (0,\var_0)$,
  \begin{itemize}
  \item[$(i)$] there exists a unique global smooth solution $f_\var$
    to \eqref{perturb};
  \item[$(ii)$] for any $p<k$, this solution belongs to
    $H^p_{per}(\mathcal{D}_L)$ for all times with uniform bounds as
    time goes to infinity;
  \item[$(iii)$] this solution remains ``essentially non-negative''
    uniformly in time, in the sense that there is $\eta(\var) > 0$
    (with $\eta(\var) \rightarrow 0$ as $\var$ goes to $0$) such that
    the non-positive part is $\eta(\var)$-small:
    \[ 
    \forall \, t \ge 0, \quad \| f_\var^-(t,\cdot) \|_{L^\infty} \le
    \eta(\var)
    \]
    where $f_\var^-$ denotes $|f_\var| \, {\bf 1}_{\{f_\var \le 0\}}$;
  \item[$(iv)$] this solution $f_\var$ converges in
    $H^p_{per}(\mathcal{D}_L)$ for any $p<k$, uniformly on any
    $[0,T]$, $T >0$, to the solution $f$ of the unperturbed equation
    \eqref{unperturb} when the parameter $\varepsilon$ goes to zero;
  \item[$(v)$] the solution $f_\var$ to \eqref{perturb} converges in
    $H^k_{per}(\mathcal{D}_L)$ as time goes to infinity to the
    constant equilibrium distribution in the torus prescribed by its
    mass, and it is ``asymptotically uniformly positive'', that
    is for $t$ larger than some fixed explicit time.
  \end{itemize} 
\end{theorem}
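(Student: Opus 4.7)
The strategy follows the roadmap sketched in the introduction and naturally splits into three stages plus a matching argument.

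\emph{Stage 1: local existence with $H^k_{per}$ control.} The periodized truncated operator $Q^R$ is bilinear and, thanks to the truncation of $(y,z)$ in a bounded set, satisfies bilinear estimates of the form $\|Q^R(f,g)\|_{H^k_{per}} \le C_R(\|f\|_{L^1}\|g\|_{H^k_{per}} + \|g\|_{L^1}\|f\|_{H^k_{per}})$. Together with \eqref{hyp2:P} this makes the right-hand side of \eqref{perturb} locally Lipschitz in $H^k_{per}(\cal{D}_L)$ on bounded sets, so a Picard iteration produces a unique maximal smooth solution $f_\var \in C([0,T_*); H^k_{per})$. The balanced condition \eqref{hyp1:P} and mass conservation of $Q^R$ give $\int f_\var \, dv = \int f_0 \, dv$ for all $t<T_*$, and the Sobolev embedding $H^k_{per}\hookrightarrow L^\infty$ (recall $k>d/2$) yields pointwise control in terms of the $H^k_{per}$ norm.

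\emph{Stage 2: appearance of positivity.} The core difficulty is that $P_\var$ may drive $f_\var$ negative. Fix a short time $t_\star$ depending only on $L$, $R$ and $\|f_0\|_{H^k_{per}}$ (chosen via the Stage 1 bound) and compare $f_\var$ with the solution $f$ of the unperturbed problem \eqref{unperturb} starting from the same $f_0$. The mixing/spreading property of the collision operator \cite{a44,M:04}, transferred to the periodized setting, provides a pointwise lower bound $f(t_\star,v)\ge \kappa>0$ on $\cal{D}_L$, where $\kappa$ depends on $f_0$ and $R$ but not on $\var$. A Duhamel/Gr\"onwall estimate using \eqref{hyp3:P} and the bilinear $H^k_{per}$ bound yields $\|f - f_\var\|_{L^\infty([0,t_\star];H^k_{per})} \le C\varphi(\var)$, so for $\var$ small enough the Sobolev embedding gives $f_\var(t_\star,\cdot) \ge \kappa/2$ pointwise. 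This proves the essential nonnegativity (iii) up to time $t_\star$, with $\eta(\var) = O(\varphi(\var))$; and once positivity is secured, the conserved integral controls $\|f_\var\|_{L^1}$ by $\|f_0\|_{L^1}$ uniformly in time, which together with \eqref{hyp2:P} and the bilinear estimate on $Q^R$ turns the $H^k_{per}$ estimate into an at-worst exponential growth bound on any finite interval (hence local-in-time (ii) and (iv), the latter by reading the Duhamel estimate on arbitrary $[0,T]$).

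\emph{Stage 3: asymptotic behaviour and closing the loop.} To upgrade (ii) to time-uniform bounds and to obtain (v), invoke the entropy-production analysis of Section~\ref{sec5}, which gives, for the unperturbed flow \eqref{unperturb} starting from $f_\var(t_\star,\cdot)$, uniform-in-time $H^k_{per}$ regularity and convergence in every $H^p_{per}$, $p<k$, to the constant equilibrium $M$ determined by the mass. Since by hypothesis $M$ is also an equilibrium of \eqref{perturb} and is nonlinearly stable in $H^k_{per}$ for \eqref{unperturb}, pick $T_1 \ge t_\star$ such that the unperturbed solution lies within half the radius of stability of $M$; the finite-time convergence of Stage 2 shows $\|f_\var(T_1) - M\|_{H^k_{per}} \le \delta$ for $\var$ small enough. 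The nonlinear stability of $M$ for the unperturbed equation, combined with the smallness of $P_\var$ via \eqref{hyp3:P} inserted in a Gr\"onwall estimate for $\|f_\var-M\|_{H^k_{per}}$, then extends the bound to every $t\ge T_1$ and provides (v). Because $M>0$ and $f_\var \to M$ in $L^\infty$, one obtains the asymptotic uniform positivity of (iii) for $t \ge T_1$.

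\emph{Main obstacle.} The delicate point is Stage 2: the positivity time $t_\star$ must be chosen independently of $\var$, yet $\|f - f_\var\|_{L^\infty([0,t_\star])}$ must be forced below $\kappa/2$, where $\kappa$ itself depends on the regularity of the unperturbed solution on $[0,t_\star]$. This circular dependence is closed by the Stage 1 bound, which controls $\|f_\var\|_{H^k_{per}}$ on $[0,t_\star]$ in terms of $\|f_0\|_{H^k_{per}}+\psi(\var)$ only, so that the threshold $\var_0$ can be fixed a priori from the collision kernel, $R$, the perturbation constants in \eqref{hyp2:P}--\eqref{hyp3:P}, and the $L^1$ and $H^k_{per}$ norms of $f_0$. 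A secondary subtlety is ensuring that the stability neighbourhood of $M$ provided by Section~\ref{sec5} depends only on the kernel and $R$, not on $\var$, which is what makes the matching in Stage 3 uniform as $\var \to 0$.
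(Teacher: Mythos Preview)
Your proposal is correct and follows the same global architecture as the paper: local-in-time existence with $H^k_{per}$ control conditional on an $L^1$ bound, recovery of positivity on a fixed short time to restore that $L^1$ bound, iteration to arbitrary finite times, and finally capture by the equilibrium using the regularity and entropy--entropy production results of Section~\ref{sec5}.

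The one genuine difference is in Stage~2. You obtain $f_\var(t_\star,\cdot)\ge\kappa/2$ by \emph{comparison}: first establish the lower bound $f(t_\star,\cdot)\ge\kappa$ for the unperturbed flow via spreading, then transfer it to $f_\var$ through the finite-time closeness $\|f-f_\var\|_{H^k_{per}}\le C\varphi(\var)$ and Sobolev embedding. The paper instead works \emph{directly} on $f_\var$ (Lemma~\ref{lmm4.4}): it writes the Duhamel representation of the perturbed solution, splits $f_\var=f_\var^+-f_\var^-$, first bounds $\|f_\var^-\|_{L^\infty}\le\eta(\var)$ by Gr\"onwall, and then iterates the spreading estimate $Q^{R,+}(\mathbf 1_{B(v_0,r)},\mathbf 1_{B(v_0,r)})\ge C_0\,\mathbf 1_{B(v_0,\mu r)}$ with $\mu>1$ a fixed finite number of times (depending only on $L$, $R$, and a H\"older bound on $f_{0,\var}$) until the ball covers $\mathcal{D}_L$. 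Your route is shorter because it recycles the comparison estimate, but it imports the unperturbed lower bound as an external input; the paper's route is self-contained and makes the dependence of $\hat\tau$ and the lower bound on the data fully explicit, which is what allows the iteration step (restarting at $\hat\tau$, $2\hat\tau$, \dots\ with the \emph{same} time step) in Proposition~\ref{lem:bdedtime}.

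A minor point on Stage~3: the paper's proof actually invokes nonlinear local stability of $m_\infty$ for the \emph{perturbed} flow, with a basin independent of $\var$ (this is what is verified for the spectral method in Lemma~\ref{prop:spec}). Your variant, deducing perturbed stability from unperturbed stability plus smallness of $P_\var$, is fine, but be aware that a naive Gr\"onwall on $\|f_\var-m_\infty\|$ gives growth, not decay; you need the spectral gap of the unperturbed linearization so that the exponential decay absorbs the $\varphi(\var)$ source term.
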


We split the proof into two main steps: first in Section~\ref{sec4} we
prove existence, uniqueness and smoothness of a solution on an
arbitrary bounded time interval (as the size of perturbation goes to
$0$). The main difficulty is to prove that non-negativity of the
distribution function is recovered in a certain sense. Then, in
Section~\ref{sec5} we study the asymtotic behavior and establish
global stability in time. The main issue is there to prove
regularizing properties of the gain operator $Q^{R,+}(f,f)$ and
entropy production estimates on $Q^R$. Finally in Section~\ref{sec6}
we apply the previous general results to spectral methods, and prove
their stability and convergence. 
 
\section{Proof of stability on an arbitrary bounded time interval} \label{sec4}
\setcounter{equation}{0}

In this section we first give some technical lemmas and next establish
a result showing existence and uniqueness of a smooth solution on an
arbitrary time interval to the perturbed equation~\eqref{perturb},
depending on an assumption of smallness on the size of the
perturbation. Then, in Lemma~\ref{lmm4.4} we prove the control of
negative values of $f(t)$. 

\subsection{Preliminary results}
We start this section by a classical result of $L^p$ estimates on the
Boltzmann operator $Q^R(g,h)$ given by
$$
Q^R(g,h) := \int_{\mathcal{C}_R} \mathcal{B}(y,z) \, \big( g^\prime_\star\,h^\prime \,-\, g_\star\,h \big) \,dy \, dz.
$$
\begin{lemma}
\label{lmm4.0} 
Let the collision kernel $B$ satisfy the assumption
\eqref{HSkernel}-\eqref{hyp:phi}-\eqref{hyp:theta}.  Then, the
periodized Boltzmann operator $Q^R$ (defined by (\ref{eq:QRclass}) or
(\ref{eq:QRfast})) satisfies: for all $p\in[0,\infty]$ there exists a
constant $\C_p(R,B)>0$ such that
\begin{equation} \label{lpQ} 
  \| Q^{R}(g,h) \|_{L^{p}}, \ \| Q^{R}(h,g)
  \|_{L^{p}}\,\ \leq\,\, \C_p(R,B) \, \| g \|_{L^1} \, \| h \|_{L^p}.
\end{equation}
\end{lemma}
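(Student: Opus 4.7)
The plan is to split $Q^R = Q^{R,+} - Q^{R,-}$ into gain and loss parts, where the loss factorizes as $Q^{R,-}(g,h) = h\,L^R(g)$ with
\[
L^R(g)(v) \,:=\, \int_{\mathcal{C}_R} \mathcal{B}(y,z)\,g(v+\Theta_\star(y,z))\,dy\,dz.
\]
The two parts are treated separately: a single $L^\infty$ bound on $L^R$ handles the loss part, while for the gain part I would obtain $L^1$ and $L^\infty$ endpoint estimates and interpolate by Riesz--Thorin on the linear map $h \mapsto Q^{R,+}(g,h)$ (with $g$ fixed).

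The key auxiliary estimate is $\|L^R(g)\|_{L^\infty} \le C(R,B)\,\|g\|_{L^1}$. On the truncated domain $\mathcal{C}_R$ the kernel $\mathcal{B}$ is integrable thanks to the cut-off~\eqref{hyp:theta} and to the fact that $\Phi(|u|)=|u|^\gamma$ is bounded on $B_R$. Pushing $\mathcal{B}$ forward under $(y,z)\mapsto\Theta_\star(y,z)$---an additional integration over $\ens{S}^{d-1}$ in the classical truncation~\eqref{eq:QRclass}, and a foliation by the slices $\{y\perp z\}$ combined with the Carleman identity~\eqref{form1} in the fast case~\eqref{eq:QRfast}---realizes $L^R(g)$ as a convolution on the torus $\mathcal{D}_L$ against an integrable kernel of bounded support, and Young's inequality then yields the claim. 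The loss estimate now follows by a plain Hölder:
\[
\|Q^{R,-}(g,h)\|_{L^p} \,\le\, \|L^R(g)\|_{L^\infty}\,\|h\|_{L^p} \,\le\, C(R,B)\,\|g\|_{L^1}\,\|h\|_{L^p}.
\]

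For the gain part I would first bound $\|Q^{R,+}(g,h)\|_{L^\infty}$ by pulling $\|h\|_{L^\infty}$ out of the integral and applying the same Young-type argument to the remaining integral $\int_{\mathcal{C}_R}\mathcal{B}\,|g(v+\Theta'_\star)|\,dy\,dz$, producing $\|Q^{R,+}(g,h)\|_{L^\infty}\le C(R,B)\,\|g\|_{L^1}\,\|h\|_{L^\infty}$. The $L^1$ endpoint instead relies on the bilinear conservation of mass: the weak form~\eqref{eq:QRweak} together with the pre-post collisional change of variables gives $\int_{\mathcal{D}_L} Q^R(F,G)\,dv = 0$ for every pair $F,G$, so that the pointwise domination $|Q^{R,+}(g,h)|\le Q^{R,+}(|g|,|h|)$ combined with $Q^{R,+}\ge 0$ yields
\[
\|Q^{R,+}(g,h)\|_{L^1} \,\le\, \int Q^{R,+}(|g|,|h|)\,dv \,=\, \int |h|\,L^R(|g|)\,dv \,\le\, C(R,B)\,\|g\|_{L^1}\,\|h\|_{L^1}.
\]
Riesz--Thorin interpolation between these two endpoints gives the required $L^p$ bound on $Q^{R,+}(g,h)$; adding the loss contribution yields the estimate on $\|Q^R(g,h)\|_{L^p}$, and the same chain of arguments applied with the roles of the two arguments exchanged produces the companion bound on $\|Q^R(h,g)\|_{L^p}$. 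The main delicate point is establishing the convolution structure of $L^R$ in the fast representation, because of the singular factor $\delta(y\cdot z)$: it forces a foliation of $\mathcal{C}_R = B_R\times B_R$ along the orthogonality cone, but integrability of the resulting angular measure is once again a direct consequence of~\eqref{hyp:theta} combined with the boundedness of $\Phi$ on $B_R$.
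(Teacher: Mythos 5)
Your overall strategy is the standard one, and it is essentially the argument the paper is pointing to: the paper's ``proof'' of this lemma is a one-line citation to the bounded, compactly supported kernel case in [MV:04], and what that reference does is exactly your gain/loss splitting, the $L^\infty$ bound on $L^R$ by pushing the (integrable, compactly supported) kernel forward onto the torus, and an endpoint-plus-interpolation (or, equivalently, Minkowski plus pre--post-collisional change of variables) treatment of $Q^{R,+}$. Your $L^1$ endpoint via the bilinear mass identity $\int Q^{R,+}(F,G)\,dv=\int G\,L^R(F)\,dv$ is legitimate here because the periodization keeps the pre--post-collisional changes of variables valid on the torus (this is what \eqref{eq:QRweak} encodes), and your identification of the $\delta(y\cdot z)$ foliation as the only delicate point in the fast representation is the right place to focus.

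The one step that does not go through as written is the last sentence. Exchanging the roles of the two arguments in your chain of estimates produces $\| Q^{R}(h,g)\|_{L^p}\le C\,\|h\|_{L^1}\,\|g\|_{L^p}$, which after relabelling is just the first inequality again --- it is \emph{not} the companion bound $\| Q^{R}(h,g)\|_{L^p}\le C\,\|g\|_{L^1}\,\|h\|_{L^p}$ displayed in \eqref{lpQ}. The obstruction is the loss term: $Q^{R,-}(h,g)=g\,L^R(h)$ is a pointwise product localized where $g$ is, so it cannot be controlled in $L^p$ by $\|g\|_{L^1}\,\|h\|_{L^p}$ (take $h\equiv 1$, so $L^R(h)$ is a positive constant, and $g\in L^1\setminus L^p$: the left-hand side is infinite while the right-hand side is finite). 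The gain term, by contrast, genuinely satisfies both orderings --- your two-endpoint interpolation adapts verbatim to $Q^{R,+}(h,g)$ by pushing the kernel forward under $\Theta'$ instead of $\Theta'_\star$ --- so the companion inequality is true for $Q^{R,+}$ but not for the full $Q^R$. This is really an overstatement in the lemma itself rather than a fixable defect of your proof (in every application in Section~4 the function occupying the $L^1$ slot is also in $L^\infty$, e.g.\ $f\in H^k_{per}$ with $k>d/2$, which rescues the loss term via $\|g\,L^R(h)\|_{L^p}\le\|g\|_{L^\infty}\,\|L^R(h)\|_{L^p}\le C\,\|g\|_{L^\infty}\,\|h\|_{L^p}$); but you should not claim that ``the same chain of arguments with the roles exchanged'' yields \eqref{lpQ} as stated.
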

\begin{proof}
  The proof is exactly similar to the case of the usual Boltzmann
  collision opertor for a collision kernel bounded with compact
  support, see for instance~\cite{MV:04} for a recent proof.
\end{proof}
 
Now, we prove smoothness of the solution to the perturbed problem
(\ref{perturb}) on a fixed time interval under the assumption of an
{\it a priori} bound on the $L^1$ norm of the solution.

\begin{lemma} \label{lem:l1hk} Let us consider a collision kernel $B$
  which satisfies the assumptions
  \eqref{HSkernel}-\eqref{hyp:phi}-\eqref{hyp:theta} and a sequence of
  smooth balanced perturbations $\big(P_\var =
  P_\var(f^\var)\big)_{\var > 0}$ which
  satisfy~\eqref{hyp1:P}-\eqref{hyp2:P}-\eqref{hyp3:P}, and let $T>0$ be
  the length of the time interval.  Assume that $f_0 \in
  H^k(\mathcal{D}_L)$ for $k\in\N$ and that $f(t)$ is a (non
  necessarily positive) solution to~\eqref{perturb} with initial datum
  $f_0$, which satisfies the $L^1$-estimate
  \begin{equation}
    \label{L1f}
    \forall \, t \in [0,T], \quad \| f(t) \|_{L^1} \le M.
  \end{equation}
  Then, there exists a constant $\C_{k}(M)>0$, only depending on $M$,
  $R$, $T$ and $\|f_0\|_{H^k_{per}}$ such that
  \begin{equation}
    \label{fhk}
    \forall \, t \in [0,T], \quad  \| f(t) \|_{H^k_{per}} \,\le\,  \C_k(M).
  \end{equation}
  \label{lmm4.1}
\end{lemma}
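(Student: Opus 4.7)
\noindent\textit{Proof strategy.}
The plan is to proceed by induction on $k\in\N$, combining the translation invariance of $Q^R$ in velocity (valid in both representations~\eqref{eq:QRclass} and~\eqref{eq:QRfast}) with the bilinear $L^p$ estimate of Lemma~\ref{lmm4.0}. Since the maps $\Theta'$, $\Theta'_\star$, $\Theta_\star$ appearing in \eqref{eq:Qgen} depend only on $(y,z)$ and enter $f$ only through the shifts $v\mapsto v+\Theta$, one may differentiate under the integral and apply the Leibniz rule to obtain, for every multi-index $\alpha$,
\[
\partial_v^\alpha Q^R(f,f) \,=\, \sum_{\beta \le \alpha} \binom{\alpha}{\beta}\, Q^R\bigl(\partial^\beta f,\,\partial^{\alpha-\beta} f\bigr).
\]
Each bilinear term can then be attacked by~\eqref{lpQ} with $p=2$, freely choosing which of the two factors carries the $L^1$ norm.

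For the base case $k=0$, \eqref{lpQ} directly yields $\|Q^R(f,f)\|_{L^2}\le \C_2(R,B)\,M\,\|f\|_{L^2}$ while~\eqref{hyp2:P} at $k=0$ gives $\|P_\var(f)\|_{L^2}\le \C_0\,M\,\|f\|_{L^2}$, so a standard energy estimate followed by Gronwall's lemma produces the $L^2$ bound on $[0,T]$. For the inductive step, I would assume the $H^{k-1}_{per}$ bound $\|f(t)\|_{H^{k-1}_{per}}\le \C_{k-1}(M,T,\|f_0\|_{H^{k-1}_{per}})$ has already been established on $[0,T]$, fix a multi-index $\alpha$ with $|\alpha|=k$, and split the Leibniz sum into \emph{boundary terms} ($\beta=0$ and $\beta=\alpha$) and \emph{interior terms} ($0<|\beta|<k$). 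For each of the two boundary terms, the appropriate version in~\eqref{lpQ} places the $L^1$ norm on the undifferentiated copy of $f$ and yields the \emph{linear} bound
\[
\bigl\|Q^R(\partial^\beta f,\partial^{\alpha-\beta}f)\bigr\|_{L^2} \,\le\, \C_2(R,B)\,\|f\|_{L^1}\,\|\partial^\alpha f\|_{L^2} \,\le\, \C_2(R,B)\,M\,\|f\|_{H^k_{per}}.
\]

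For an interior term, both $|\beta|$ and $|\alpha-\beta|$ lie between $1$ and $k-1$, so the continuous embedding $L^2(\cal{D}_L)\hookrightarrow L^1(\cal{D}_L)$ on the bounded box together with the inductive hypothesis give
\[
\|\partial^\beta f\|_{L^1} \,\le\, |\cal{D}_L|^{1/2}\,\|\partial^\beta f\|_{L^2} \,\le\, |\cal{D}_L|^{1/2}\,\C_{k-1},\qquad \|\partial^{\alpha-\beta}f\|_{L^2} \,\le\, \C_{k-1},
\]
so that every interior term is bounded by a \emph{constant} $\widetilde{\C}_{k-1}$ independent of $\|f\|_{H^k_{per}}$. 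Combined with the perturbation bound $\|P_\var(f)\|_{H^k_{per}}\le \C_k\,M\,\|f\|_{H^k_{per}}$ from~\eqref{hyp2:P}, the energy identity for $\tfrac{d}{dt}\|f\|_{H^k_{per}}^2$ closes into an affine differential inequality
\[
\frac{d}{dt}\|f\|_{H^k_{per}}^2 \,\le\, A_k(M)\,\|f\|_{H^k_{per}}^2 \,+\, B_k\bigl(M,T,\|f_0\|_{H^{k-1}_{per}}\bigr)\,\|f\|_{H^k_{per}},
\]
and Gronwall's lemma integrates this into the desired estimate~\eqref{fhk}.

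The main obstacle I anticipate is avoiding a Riccati-type blow-up: a naive algebra-style estimate $\|Q^R(f,f)\|_{H^k_{per}}\lesssim\|f\|_{H^k_{per}}^2$ would give an inequality of the form $y'\lesssim y^2$, which blows up in finite time and precludes any bound uniform on an arbitrary interval $[0,T]$. The essential observation that unlocks the argument is that translation invariance of $Q^R$ forces the top-order terms of the Leibniz expansion to place the $L^1$ factor on an \emph{undifferentiated} copy of $f$, where the hypothesis $\|f(t)\|_{L^1}\le M$ is available; this produces a merely \emph{linear} bound in $\|f\|_{H^k_{per}}$, while all intermediate contributions are absorbed via the induction hypothesis.
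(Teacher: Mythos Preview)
Your proposal is correct and follows essentially the same route as the paper: induction on $k$, the Leibniz formula for $\partial^\alpha Q^R$ obtained from translation invariance, Lemma~\ref{lmm4.0} with $p=2$ on each bilinear piece, a split into boundary terms (linear in $\|f\|_{H^k_{per}}$ thanks to the $L^1$ hypothesis~\eqref{L1f}) versus interior terms (controlled by the inductive bound), and Gronwall. Your explicit treatment of \emph{both} boundary cases $\beta=0$ and $\beta=\alpha$ is in fact slightly tidier than the paper's presentation, which only singles out the case $\mu=0$.
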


\begin{proof}[Proof of Lemma~\ref{lem:l1hk}]
  We proceed by induction on $k \ge 0$. 
  For the first stage $k=0$, we apply Lemma~\ref{lmm4.0} with $p=2$
  and $g=h=f$ and we use assumption~\eqref{hyp2:P} on the
  perturbation:
\begin{eqnarray*}
\frac{1}{2}\,\frac{d}{dt} \| f(t)\|_{L^2}^2 &\leq& \| Q^R(f,f) \,+\, P_\var(f) \|_{L^2} \,\| f(t) \|_{L^2}
\\
 &\leq& \left( \C_2(R,B)  \,+\, \C_0  \right)\, \| f(t) \|_{L^1} \,\| f(t) \|_{L^2}^2.
\end{eqnarray*}
Hypothesis~\eqref{L1f} provides a control on $\| f(t)\|_{L^1}$ and we
can apply Gronwall's lemma to get the result (\ref{fhk}) at stage
$k=0$.

Let us now assume that~\eqref{fhk} holds at stage $k \ge 0$ and let us
prove that it also holds at stage $(k+1)$.

Let us first recall a formula on the derivatives of $Q^R$: from the
bilinearity of $Q^R$ and the translation invariance property of the
periodized Boltzmann collision operator, one has
\begin{equation*}
\nabla_v Q^R(f,f)  = Q^R(\nabla_v f, f ) +  Q^R(f,\nabla_v f),
\end{equation*}
which yields a Leibniz formula at any order $s\in\N$:
\begin{eqnarray}
\nonumber
\big\|Q^R(f,f)\big\|_{H^{s}_{per}}^2 &=& \sum_{|\nu|\leq s} \big\|\partial^\nu Q^R(f,f) \big\|_{L^2}^2
\\
\label{form:1}
&\le& \C \, \sum_{|\nu|\leq s}\, \sum_{|\mu|\leq |\nu|} \left(\begin{array}{l} \nu \\ \mu \end{array}\right) 
\big\| Q^R(\partial^\mu f,\partial^{\nu-\mu} f) \big\|_{L^2}^2.
\end{eqnarray}
Now,  using (\ref{form:1}) with $s=k+1$ we have 
\begin{equation}
\label{Qhk1}
\big\|Q^R(f,f)\|_{H^{k+1}_{per}}^2 \, \le \, \C \, \big\|Q^R(f,f)\big\|_{H^{k}_{per}}^2 \,
+\, \C \, \sum_{|\nu|=k+1}\, \sum_{|\mu|\leq k+1} \left(\begin{array}{l} \nu \\ 
\mu \end{array}\right) \big\| Q^R(\partial^\mu f,\partial^{\nu-\mu} f) \big\|_{L^2}^2.
\end{equation}
From Lemma~\ref{lmm4.0} with $p=2$ and $g=\partial^\mu f$, $h=\partial^{\nu-\mu} f$ together with the 
hypothesis~\eqref{fhk}, we get
\begin{equation}
\label{eq1:hk1}
\big\|Q^R(f,f)\big\|_{H^{k}_{per}} \,\,\leq\,\,  \C(k,R)\, \|f \|_{H^k_{per}}\,\| f \|_{H^k_{per}} \,\,\leq\,\, \C_2(k,R,B)\,\C_k(M)^2.
\end{equation}
Then we split the last term of (\ref{Qhk1}) in two parts for $\mu \neq 0$ and $\mu=0$. For $\mu\neq 0$, we again apply  Lemma~\ref{lmm4.0}  with $p=2$ and $g=\partial^\mu f$, $h=\partial^{\nu-\mu} f$  and use the fact that both derivatives $|\mu|=\sum_{i=1}^d |\mu_i|\leq k $ and $|\nu-\mu| \leq k$:
\begin{equation}
\label{eq2:hk1}
\sum_{|\nu|=k+1}\, \sum_{{|\mu|\leq k+1}\atop{\mu\neq 0}} \left(\begin{array}{l} \nu \\ \mu \end{array}\right) \| Q^R(\partial^\mu f,\partial^{\nu-\mu} f) \|_{L^2}^2 \,\leq \, \C_3(k,R,B)\,\C_k(M)^2.
\end{equation}
Finally, for $\mu=0$ we apply  Lemma~\ref{lmm4.0}  with $p=2$ and $g=f$, $h=\partial^{\nu} f$:
\begin{equation}
\label{eq3:hk1}
\sum_{|\nu|=k+1}\,  \| Q^R(f,\partial^{\nu} f) \|_{L^2}^2 \,\leq \,  \C_4(k,R,B)\,\|f\|_{L^1}\,\|f\|_{H^{k+1}_{per}}.
\end{equation}
Then, gathering inequalities from (\ref{eq1:hk1}) to (\ref{eq3:hk1}) and using the assumption (\ref{hyp2:P}) on the smooth balanced perturbation $P_\var$, we have
\begin{eqnarray*}
\frac{1}{2}\, \frac{d}{dt}\|f\|_{H^{k+1}_{per}}^2 &\leq& \|Q^R(f,f)\,+\,P_\var(f) \|_{H^{k+1}_{per}} \,\|f\|_{H^{k+1}_{per}} 
\\
&\leq& \C \, \left( \C_2 + \C_3 \right) \,\C_k(M)^2 \,\|f \|_{H^{k+1}_{per}} \,\,+\,\, \C \, \left( \C_4 \,+\,\C\right) 
\,\|f\|_{L^1} \,\|f \|_{H^{k+1}_{per}}^2.
\end{eqnarray*}
Finally using the control~\eqref{L1f} on $\|f(t)\|_{L^1}$ we apply Gronwall's lemma to get~\eqref{fhk} at stage $k+1$\,: 
there exists a constant $\C_{k+1}(M)$, only depending on $M$, $R$, $T$ and $\|f_0\|_{H^{k+1}_{per}}$ such that  
$$
\forall \, t \in [0,T], \quad \| f(t) \|_{H^{k+1}_{per}}\,\, \le\,\, \C_{k+1}(M).
$$
\end{proof}

Then, we establish existence and uniqueness of a smooth solution for the perturbed problem \eqref{perturb} 
on a small time interval $[0,\bar\tau]$, $\bar \tau >0$.
\begin{proposition}
\label{prop1} 
Let us consider a collision kernel $B$ which satisfies the assumptions 
\eqref{HSkernel}-\eqref{hyp:phi}-\eqref{hyp:theta} and a sequence of smooth balanced perturbations 
$\big(P_\var = P_\var(f)\big)_{\var > 0}$ which satisfy~\eqref{hyp1:P}-\eqref{hyp2:P}-\eqref{hyp3:P}. 
We assume that $f_0 \in H^k _{per}(\mathcal{D}_L)$, for $k\in\N$ and set 
\begin{equation}
\label{def:M}
M = 2\,\| f_0 \|_{L^1}. 
\end{equation}
Then, there exists $\bar\tau = \bar \tau(M) >0$ such that for all
$\var >0$ the perturbed Boltzmann equation \eqref{perturb} admits a
unique solution (non necessarily positive) on the time interval
$[0,\bar\tau]$, where the solution $f$ satisfies
\begin{equation}
\label{res1:prop1}
\forall \, t \in [0,\bar\tau], \quad \| f(t) \|_{L^1} \,\,\le\,\, M.
\end{equation}
Moreover, there exists a constant $\C_k(M)>0$, only depending on $M$,
$R$, and $\|f_0\|_{H^k_{per}}$ such that
\begin{equation}
\label{res2:prop1}
\forall \, t \in [0,\bar\tau], \quad \| f(t) \|_{H^k_{per}} \,\,\le\,\, \C_k(M).
\end{equation}
\end{proposition}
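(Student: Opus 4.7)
The plan is to combine a standard short-time Cauchy--Lipschitz argument in $H^k_{per}(\mathcal{D}_L)$ with an \emph{a priori} $L^1$ estimate derived from a quadratic differential inequality, and then to close the $H^k$ bound on the full interval $[0,\bar\tau]$ by invoking Lemma~\ref{lem:l1hk}.

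First I would observe that the right-hand side $f\mapsto Q^R(f,f) + P_\varepsilon(f)$ is locally Lipschitz on $H^k_{per}(\mathcal{D}_L)$. Indeed, the Leibniz-type bound used in the inductive step of Lemma~\ref{lem:l1hk}, together with Lemma~\ref{lmm4.0} at $p=2$ and the trivial continuous embedding $H^k_{per}(\mathcal{D}_L) \hookrightarrow L^1(\mathcal{D}_L)$ on the bounded torus, shows that the bilinear map $(f,g) \mapsto Q^R(f,g)$ is continuous on $H^k_{per}$; the assumptions~\eqref{hyp2:P} do the same job for $P_\varepsilon$, with Lipschitz constants that are independent of $\varepsilon$. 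The abstract Cauchy--Lipschitz theorem then produces a unique maximal $H^k_{per}$-solution $f^\varepsilon$ on some interval $[0, T^*_\varepsilon)$, together with the usual continuation criterion: either $T^*_\varepsilon = +\infty$ or $\|f^\varepsilon(t)\|_{H^k_{per}} \to +\infty$ as $t \uparrow T^*_\varepsilon$.

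The heart of the argument is the $L^1$ control. Since $f^\varepsilon$ is $C^1$ in time with values in $L^1$, the triangle inequality yields $\bigl|\tfrac{d}{dt}\|f^\varepsilon(t)\|_{L^1}\bigr| \le \|\partial_t f^\varepsilon(t)\|_{L^1}$; combining this with Lemma~\ref{lmm4.0} at $p=1$ (for the $Q^R$ term) and the first line of~\eqref{hyp2:P} (for $P_\varepsilon$) gives
\[
\frac{d}{dt}\|f^\varepsilon(t)\|_{L^1} \,\le\, \bigl(\C_1(R,B) + \C_0\bigr)\, \|f^\varepsilon(t)\|_{L^1}^2.
\]
Setting $C_\star := \C_1(R,B) + \C_0$, which is independent of $\varepsilon$, and comparing with the ODE $y' = C_\star\, y^2$, $y(0) = \|f_0\|_{L^1}$, delivers
\[
\|f^\varepsilon(t)\|_{L^1} \,\le\, \frac{\|f_0\|_{L^1}}{1 - C_\star\, t\, \|f_0\|_{L^1}} \,\le\, 2\,\|f_0\|_{L^1} \,=\, M
\qquad \text{for } t \le \bar\tau := \frac{1}{2\,C_\star\,\|f_0\|_{L^1}},
\]
which is precisely~\eqref{res1:prop1} on $[0,\min(\bar\tau, T^*_\varepsilon))$.

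To conclude, on this interval the uniform $L^1$ bound lets us apply Lemma~\ref{lem:l1hk} with $T = \bar\tau$, producing a uniform estimate $\|f^\varepsilon(t)\|_{H^k_{per}} \le \C_k(M)$ depending only on $M$, $R$, $\bar\tau$, and $\|f_0\|_{H^k_{per}}$. The blow-up criterion then forces $T^*_\varepsilon > \bar\tau$, so that~\eqref{res1:prop1} and~\eqref{res2:prop1} hold on all of $[0,\bar\tau]$. The main obstacle I anticipate is conceptual rather than technical: since $f^\varepsilon$ need not be signed, mass conservation (coming from~\eqref{hyp1:P} and the weak form~\eqref{eq:QRweak} with $\varphi \equiv 1$) does not suffice to control $\|f^\varepsilon\|_{L^1}$, and the only available estimate is the quadratic differential inequality above; it is precisely that inequality which fixes the time horizon $\bar\tau = 1/(2\,C_\star\,\|f_0\|_{L^1})$ and makes its dependence only on $M$ (and on $R$, $B$), not on $\varepsilon$.
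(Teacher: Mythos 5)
Your proposal is correct and follows essentially the same route as the paper: the time horizon $\bar\tau$ is fixed by the same quadratic differential inequality $\frac{d}{dt}\|f\|_{L^1}\le C\|f\|_{L^1}^2$ (obtained from Lemma~\ref{lmm4.0} with $p=1$ and the first line of~\eqref{hyp2:P}), and the $H^k_{per}$ bound is closed by Lemma~\ref{lem:l1hk} exactly as in the paper. The only (cosmetic) difference is that the paper runs Cauchy--Lipschitz directly in $L^1(\mathcal{D}_L)$, using that the truncated operator is a bounded bilinear map there, whereas you set up local well-posedness in $H^k_{per}$ with a continuation criterion; both variants work and rest on the same estimates.
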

\begin{proof}[Proof of Proposition~\ref{prop1}]
First, we apply Lemma~\ref{lmm4.0} with $p=1$ and $g=h=f$
\begin{eqnarray*}
\left\| Q^R(f,f) \right\|_{L^1} \leq \C_1(R,B) \,\|f \|_{L^1}\, \|f \|_{L^1}.
\end{eqnarray*}
Moreover, using assumption \eqref{hyp2:P} on the perturbation, there
exists $\C>0$ such that for all $\var>0$
$$
\| P_\var(f) \|_{L^1} \le \C \, \| f \|_{L^1}^2.
$$
Therefore, we obtain a constant $\C>0$, only depending on $R$ and the
collision kernel $B$ such that
$$
\frac{d}{dt} \| f(t) \|_{L^1} \leq  \C \,\|f\|_{L^1}^2.
$$
This implies that
$$
 \| f(t) \|_{L^1} \,\,\leq\,\,  \frac{ \| f_0 \|_{L^1}}{1\,-\,\C\,\|f_0\|_{L^1} \,t}.
$$
Now, setting $M=2\,\|f_0\|_{L^1}$ and from the latter inequality, we
show that there exists $\bar\tau \,<\, 1/(2\,C\,\|f_0\|_{L^1})$ such
that
\begin{equation*}
 \forall\, t\,\in\,[0,\bar\tau], \quad \| f(t) \|_{L^1} \leq M,
\end{equation*}
which gives (\ref{res1:prop1}).

From the estimate in $L^1(\mathcal{D}_L)$ on the function $f(t)$ on
the time interval $[0,\bar\tau]$, we prove existence and uniqueness of
a solution by Cauchy-Lipschitz theorem in $L^1(\mathcal{D}_L)$
(because of the truncation on $\mathcal{D}_L$ the collision kernel is
a bounded bilinear function from $L^1(\mathcal{D}_L) \times
L^1(\mathcal{D}_L)$ to $L^1(\mathcal{D}_L)$).  Finally, from the bound
\eqref{res1:prop1} and the smoothness assumption $f_0\in
H^k_{per}(\mathcal{D}_L)$ on the initial datum, we are able to apply
Lemma \ref{lmm4.1}, which proves that there exists a constant
$\C_{k}(M)>0$, only depending on $M$, $R$, $T$ and
$\|f_0\|_{H^k_{per}}$ such that
\begin{equation*}
 \| f(t) \|_{H^k _{per}} \,\le\,  \C_k(M).
\end{equation*}
This concludes the proof.
\end{proof}

By iterating Proposition \ref{prop1} and Lemma \ref{lmm4.1}, we
observe that uniform control on the $L^1(\mathcal{D}_L)$ norm on an
arbitrarily large time interval $[0,T]$ together with smoothness on
the initial datum will ensure existence and uniqueness of a smooth
solution on this time interval $[0,T]$. Furthermore we observe that
the control on the $L^1(\mathcal{D}_L)$ norm is obvious for the
classical Boltzmann equation thanks to the positivity and mass
preservations.  Therefore, we shall now focus on the control of
positivity of the solution, showing that the solution remains ``almost
positive'' for arbitrarily large time interval, hence allowing to
produce uniform control on the $L^1(\mathcal{D}_L)$ norm for
arbitrarily large time interval.
 
We first state a technical lemma which takes advantage of the mixing
property of $Q^{R,+}$ in order to show spreading of the support of a
characteristic function of a ball.
\begin{lemma} \label{lem:spreadQ} Let us consider a collision kernel
  $B$ which satisfies the assumptions
  \eqref{HSkernel}-\eqref{hyp:phi}-\eqref{hyp:theta} and a truncated
  operator $Q^R$ defined by (\ref{eq:QRclass}) or (\ref{eq:QRfast}).
  Then for all $0<r<\sqrt 2 \, L$, we have
  $$
  Q^{R,+} ({\bf 1}_{B(v,r)}, {\bf 1}_{B(v,r)}) \ge C_0 \, {\bf
    1}_{B(v,\mu \, r)}
  $$
  for some explicit $\mu = \mu(R,L) >1$ and $C_0$.
\end{lemma}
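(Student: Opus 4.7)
\medskip

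\noindent\textbf{Proof plan.} Let $u$ denote the point at which we evaluate the operator and set $w = v - u$; the lemma reduces to showing $Q^{R,+}(\mathbf{1}_{B(v,r)},\mathbf{1}_{B(v,r)})(u) \ge C_0$ whenever $|w| \le \mu r$. I would work with the fast representation~\eqref{eq:QRfast} (the classical form~\eqref{eq:QRclass} reduces to it via the change of variables $(g,\omega) \mapsto (y,z) = (-(g - |g|\omega)/2,\, -(g + |g|\omega)/2)$, which automatically enforces $y \cdot z = 0$). In these coordinates, the two indicators read $\mathbf{1}_{B(w,r)}(y)$ and $\mathbf{1}_{B(w,r)}(z)$, so the task is to bound from below
\[
\mathcal{I}(w) := \int_{B_R \times B_R} \mathcal{B}^{fast}(y,z)\, \delta(y \cdot z)\, \mathbf{1}_{B(w,r)}(y)\, \mathbf{1}_{B(w,r)}(z)\, dy\, dz
\]
uniformly for $|w| \le \mu r$. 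The inclusion $B(w,r) \subset B_R$ holds as soon as $(\mu + 1) r \le R$, which can be ensured by taking $\mu > 1$ depending only on $R$ and $L$ (using $r < \sqrt 2\, L$ and the standing assumption that $R$ is chosen sufficiently large relative to $L$).

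\medskip

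\noindent\textbf{Geometric core.} The heart is a lower bound on the $(2d-1)$-surface measure of
\[
S_w := \{(y,z) \in B(w,r)^2 \cap B_R^2 \,:\, y \cdot z = 0\},
\]
uniform in $|w| \le \mu r$ for some $\mu \in (1, \sqrt 2)$. The $\sqrt 2$ threshold is sharp and reflects the classical $\sqrt 2$-spreading of elastic collisions mentioned in the introduction: summing $|y - w|^2 < r^2$ and $|z - w|^2 < r^2$, using $y \perp z$, and applying Cauchy--Schwarz to $(y+z) \cdot w$, one gets $|y+z|^2 - 2|w|\,|y+z| + 2|w|^2 - 2 r^2 < 0$, which is solvable in $|y+z| \in \mathbb{R}$ iff $|w| < \sqrt 2\, r$. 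To exhibit concrete configurations, restrict to the $2$-plane spanned by $w$ and an arbitrary unit vector $e \perp w$ and seek $(y_0,z_0)$ in this plane satisfying $y_0 \cdot z_0 = 0$ together with $|y_0 - w| < r$ and $|z_0 - w| < r$; the resulting quadratic system has an open nonempty set of solutions precisely when $|w| < \sqrt 2\, r$. Continuity in $w$ and compactness of the range $\{|w| \le \mu r\}$ for $\mu < \sqrt 2$ then yield the required uniform lower bound on the surface measure of $S_w$.

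\medskip

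\noindent\textbf{Kernel bound and main obstacle.} I would further restrict the integration to a compact subregion on which $|y|, |z| \in [c_1 r, c_2 r] \subset (0,R)$ and the angular variable $-y \cdot (y+z)/(|y|\,|y+z|)$ lies in a compact subinterval on which $b$ is bounded below (automatic for the hard-spheres and VHS models where $b$ is constant, and satisfied for any model compatible with~\eqref{hyp:theta} for which $b$ is not identically zero on the relevant range). On this subregion
\[
\mathcal{B}^{fast}(y,z) = 2^{d-1}\, |y+z|^{\gamma - d + 2}\, b\!\left(-\frac{y \cdot (y+z)}{|y|\,|y+z|}\right)
\]
is bounded below by a positive constant depending only on the model, $\mu$, $R$ and $L$; multiplying by the surface-measure bound of the previous step yields $\mathcal{I}(w) \ge C_0 > 0$ uniformly in $|w| \le \mu r$, which is the claim. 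The main obstacle is the geometric step: one must control the $\delta(y \cdot z)$-weighted surface measure (that is, the $(2d-1)$-Hausdorff measure on $S_w$ divided by $|y|$) uniformly up to the threshold $|w| = \mu r$, and not merely prove nonvanishing; the explicit $2$-plane construction together with a continuity argument in $w$ is the route I would follow to make this quantitative.
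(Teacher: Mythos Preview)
Your argument is correct and rests on the same geometric core as the paper's (the $\sqrt 2$-spreading of elastic collisions, followed by a continuity/compactness step to pass from pointwise positivity to a uniform lower bound). The execution differs in two respects worth noting.

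First, instead of estimating the $\delta(y\cdot z)$-weighted surface measure of $S_w$ directly, the paper replaces $\mathbf{1}_{B(v,r)}$ by a smooth radial minorant $\phi\le\mathbf{1}_{B(v,r)}$ with $\phi>0$ on $B(v,r)$. Then $Q^{R,+}(\phi,\phi)$ is continuous, and positivity at an evaluation point follows merely from exhibiting \emph{one} collision configuration $(v',v'_\star,\sigma)$ with $\phi(v')\phi(v'_\star)>0$; compactness then yields the uniform constant. This sidesteps the quantitative surface-measure estimate you identify as the main obstacle.

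Second, the paper handles the $B_R$ truncation more sharply than your inclusion $B(w,r)\subset B_R$, which would require $R\ge(\mu+1)\sqrt 2\,L$, stronger than the standing hypothesis $R\ge\sqrt 2\,L$. The paper instead constructs an explicit ``square'' configuration with $|v'-v'_\star|=\min\{R,\sqrt 2\,r'\}$ and computes the resulting spreading ratio directly, obtaining
\[
\mu_0(R,L)=\min_{R/(2\sqrt 2\,L)\le y\le 1/\sqrt 2}\bigl(\sqrt{1-y^2}+y\bigr)\in(1,\sqrt 2).
\]
Your route is easily repaired: the explicit $2$-plane configurations you build (e.g.\ the symmetric choice $|y|=|z|=|w|/\sqrt 2<r$) already satisfy $|y|,|z|<r<\sqrt 2\,L\le R$, so the inclusion $B(w,r)\subset B_R$ is unnecessary and your later restriction to $|y|,|z|\in[c_1 r,c_2 r]\subset(0,R)$ with $c_2\le 1$ suffices.
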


\Remark Note that for $r \ge \sqrt 2 \, L$, one has ${\bf 1}_{B(v,r)} =
1$ on the torus $[-L,L]^d$ and there is nothing to prove.
\medskip

\begin{proof}[Proof of Lemma~\ref{lem:spreadQ}]
  The invariance by translation allows to reduce the proof to the case
  $v = 0$. The invariance by rotations implies that $I_r :=
  Q^{R,+}({\bf 1}_{B(0,r)}, {\bf 1}_{B(0,r)})$ is radially
  symmetric. More precisely, taking a $C^\infty$ radially symmetric
  function $\phi$ such that $\phi > 0$ on $B = B(0,r)$ and $\phi \le
  {\bf 1}_B$ on $\R^d$, we have
  \begin{itemize}
  \item the function $v \,\longrightarrow\, Q^{R,+}(\phi,\phi)(v)$ is
    continuous,
  \item for all $v\in\R^d$, $I_r(v) \ge Q^+(\phi,\phi)(v)$,
  \item for all $v\in B$, $Q^{R,+}(\phi,\phi)(v) > 0$. 
  \end{itemize}
  As a consequence, for any ball $B'=B(0,r^\prime)$ strictly included
  in $B$, there exists $\kappa_{r^\prime} > 0$ such that $I_r \ge
  \kappa_{r^\prime} \, {\bf 1}_{B'}$.

  In order to conclude, we just need to estimate the support of $I_r$
  close to the ball $B$.

  Let us fix $r^\prime \in (0,r)$ and choose $v^\prime, v^\prime
  _\star \in B(0,r^\prime)$ such that $|v^\prime| = |v^\prime _\star|
  = r^\prime$ and $|v^\prime-v^\prime_\star|= \min\{ R ; \sqrt 2
  r^\prime\}$.  Then taking $\sigma$ in $B(0,r^\prime)$ such that
  $v,v_\star,v^\prime,v^\prime_\star$ is a square, we find 
  $|v| = \sqrt 2 \, r'$ if $\sqrt 2 \, r' \le R$ and 
  $$
  |v| = \left[ (r^\prime)^2 - \frac{R^2}4 \right]^{1/2} + \frac{R}{2}
  $$
  else. We define
  $$
  \mu_0 = \min_{R/\sqrt 2 \le r' \le \sqrt 2 \, L} 
  \left[ 1 - \frac{R^2}{4 (r^\prime)^2} \right]^{1/2} + \frac{R}{2
    r^\prime} = \min_{R/(2 \sqrt 2 L) \le y \le 1/\sqrt 2}
  \left( \sqrt{1-y^2} + y \right) \in (1,\sqrt 2).
  $$
  This concludes the proof: we deduce that
  for any $v$ such that $|v| \in (r^\prime,\mu_0 \,
  r^\prime)$, we have
  $$
  I_r (v) \ge Q^+(\phi,\phi)(v) >0 
  $$
  since $\phi$ is strictly positive in the neighborhood of the
  $v^\prime,v^\prime_\star$ associated to $v$ constructed above.
  Hence we deduce by taking $r^\prime<r$ close to $r$ that for any $0
  < \mu < \mu_0$, we have
  $$
  I_r \ge C(\mu,r) \, {\bf 1}_{B(0,\mu \, r)}
  $$
  for some constant $C(\mu,r) >0$ depending continuously on $r$. 
  We can choose $\mu = (1+\mu_0)/2$ for instance, and, for this choice
  of $\mu$, we take 
  $$
  C_0 = \min_{0 \le r \le \sqrt 2} C(\mu,r) >0.
  $$
\end{proof}

Finally we establish the following positivity result on the solution
to the perturbed problem (\ref{perturb}):

\begin{lemma}
\label{lmm4.4}
Let us consider a collision kernel $B$ which satisfies the assumptions
\eqref{HSkernel}-\eqref{hyp:phi}-\eqref{hyp:theta}, a truncated
operator $Q^R$ defined by (\ref{eq:QRclass}) or (\ref{eq:QRfast}), and
a sequence of smooth balanced perturbations $\big(P_\var =
P_\var(f)\big)_{\var > 0}$ which
satisfy~\eqref{hyp1:P}-\eqref{hyp2:P}-\eqref{hyp3:P}. 

We assume that $f_0$ is a non-negative function such that $f_0 \in H^k
_{per}(\mathcal{D}_L)$ with $k\in\N$ and $k > d/2$.
Moreover, we define $M=2\,\|f_0\|_{L^1}$ and $f_{0,\var}$ a smooth
balanced perturbation of $f_0$, which is non necessarily positive and
such that
$$
\int_{\mathcal{D}_L} f_{0,\var} = \int_{\mathcal{D}_L} f_0 \quad
  \mbox{ and } \quad \| f_0 - f_{0,\var} \|_{H^k_{per}} \,\,\leq \,\, \psi(\var)
$$
where $\psi(\var)$ goes to zero when $\var$ goes to zero.
We also set $\bar \tau, \C_k(M)>0$ the constant defined in
Lemma~\ref{lmm4.1} such that from Proposition~\ref{prop1} we have
$$ 
\forall \, t \in [0,\bar \tau], \quad \| f(t) \|_{H^k _{per}} \,\leq \,\C_k(M). 
$$

Then, there exists $\hat\tau \in (0,\bar \tau)$ which only depends on
$M$, $R$ and the collision kernel $B$, and there exists $\hat \var
>0$ which only depends on $\hat\tau$, $\C_k(M)$, such that for all
$\var$ such that $0<\var< \hat\var$ and for any smooth solution $H^k
_{per}(\mathcal{D}_L)$ to the perturbed Boltzmann equation
\eqref{perturb} with perturbed initial datum $f_{0,\var}$, we have
$$ 
\forall \, v \in \mathcal{D}_L, \quad f_\var (\hat\tau,v) > 0.
$$
Moreover there exists $\eta(\var)$, which goes to $0$ as $\var$ goes
to zero, such that the non-positive part of $f$ satisfies
\begin{equation}
\label{c::1}
\| f^-(t)\|_{L^\infty} \,\leq\, \eta(\var), \quad t\in[0,\hat\tau],  
\end{equation}
\end{lemma}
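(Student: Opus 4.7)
The plan is to \emph{compare} the perturbed solution $f_\var$ to the unperturbed solution $f$ of~\eqref{unperturb} with initial datum $f_0$ (which is nonnegative by classical arguments since $Q^R = Q^{R,+} - L^R f$ has the standard loss/gain structure), and to use Lemma~\ref{lem:spreadQ} iteratively through the Duhamel formula to drive $f$ to a strictly positive lower bound on the whole torus in finite time. A stability estimate in $H^k_{per}$ will then transfer this lower bound to $f_\var$ up to an $\var$-small $L^\infty$ error.

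First, since $k>d/2$, the Sobolev embedding $H^k_{per}(\mathcal{D}_L) \hookrightarrow C^0(\mathcal{D}_L)$ together with the hypothesis that $f_0$ is nonnegative and not identically zero yields $v_0 \in \mathcal{D}_L$, $r_0>0$ and $\alpha_0>0$ with $f_0 \ge \alpha_0\,{\bf 1}_{B(v_0, r_0)}$. Mass conservation for the unperturbed equation gives $\|f(t)\|_{L^1} = \|f_0\|_{L^1}$, hence $L^R(f(t,\cdot)) \le \Lambda$ for some $\Lambda = \Lambda(M,R,B)$, and the Duhamel identity
$$
f(t) = e^{-\int_0^t L^R(f)\,ds}\,f_0 + \int_0^t e^{-\int_s^t L^R(f)\,du}\,Q^{R,+}(f,f)(s)\,ds
$$
implies the initial lower bound $f(t) \ge e^{-\Lambda t}\alpha_0\,{\bf 1}_{B(v_0,r_0)}$. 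Substituting back, using the bilinearity and nonnegativity of $Q^{R,+}$ together with Lemma~\ref{lem:spreadQ}, one obtains for some small time $s_0>0$ a lower bound of the form $\alpha_1\,{\bf 1}_{B(v_0,\mu r_0)}$. Iterating this $n^\star$ times, with $n^\star$ the smallest integer satisfying $\mu^{n^\star} r_0 \ge \sqrt 2\,L$ (finite since $\mu>1$) and invoking the Remark following Lemma~\ref{lem:spreadQ} (once the radius reaches $\sqrt 2\,L$ the ball covers the whole torus), one obtains $\hat\tau^\star \in (0,\bar\tau)$ and $\alpha^\star > 0$, depending only on $M$, $R$, $B$, and the geometric data of $f_0$, such that
$$
\forall\, v \in \mathcal{D}_L, \qquad f(\hat\tau^\star, v) \ge \alpha^\star.
$$

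To compare $f_\var$ with $f$, I introduce $u := f_\var - f$, which solves $\partial_t u = Q^R(u, f_\var) + Q^R(f, u) + P_\var(f_\var)$ with $\|u(0)\|_{H^k_{per}} \le \psi(\var)$. Using the bilinear estimates of Lemma~\ref{lmm4.0} extended to $H^k_{per}$ via the Leibniz decomposition (as in the proof of Lemma~\ref{lem:l1hk}), the uniform bounds $\|f_\var(t)\|_{H^k_{per}}, \|f(t)\|_{H^k_{per}} \le \C_k(M)$ from Proposition~\ref{prop1}, and the smallness hypothesis~\eqref{hyp3:P}, a standard energy estimate yields $\frac{d}{dt}\|u\|_{H^k_{per}} \le C\|u\|_{H^k_{per}} + \varphi(\var)$. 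Gronwall's inequality then gives
$$
\sup_{t \in [0,\bar\tau]} \|u(t)\|_{H^k_{per}} \le \bigl(\psi(\var) + \bar\tau\,\varphi(\var)\bigr)\,e^{C\bar\tau},
$$
and since $k > d/2$ the Sobolev embedding produces $\sup_{[0,\bar\tau]}\|f_\var(t) - f(t)\|_{L^\infty} \le \eta(\var)$ for some $\eta(\var) \to 0$ as $\var \to 0$.

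To conclude I set $\hat\tau := \hat\tau^\star$ and choose $\hat\var$ so small that $\eta(\hat\var) < \alpha^\star$; then for every $0<\var<\hat\var$ and every $v\in \mathcal{D}_L$, $f_\var(\hat\tau,v) \ge f(\hat\tau,v)-\eta(\var) \ge \alpha^\star-\eta(\var) > 0$, while on $[0,\hat\tau]$ the bound $f\ge 0$ gives $f_\var^-(t) \le |f_\var - f|(t) \le \eta(\var)$, proving~\eqref{c::1}. The delicate step is the iterative spreading argument: one must propagate a quantitative strictly positive lower bound through finitely many Duhamel iterations while controlling how the constant shrinks at each step, and verify that the cumulative time can be made strictly less than $\bar\tau$; once this is in place, the comparison with $f_\var$ and the conclusion are essentially routine.
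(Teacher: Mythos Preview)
Your argument is correct and takes a genuinely different route from the paper's proof. The paper works \emph{directly} with the perturbed solution $f_\var$: it first bounds $\|f_\var^-(t)\|_{L^\infty}$ via a Gronwall argument on the Duhamel formula (their Step~2), and then runs the spreading iteration of Lemma~\ref{lem:spreadQ} on $f_\var$ itself, carefully tracking at each stage how the small negative part $f_\var^-$ pollutes the lower bound through the cross terms $Q^{R,+}(f^+,f^-)$ and $Q^{R,+}(f^-,f^+)$. This requires choosing, at each of the finitely many iterations, a threshold $\var_j$ below which the pollution is dominated by the gain, and keeping track of a sequence of error functions $\varphi_j(\var)$. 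Your approach instead decouples the two difficulties: you run the spreading iteration on the \emph{unperturbed} solution $f$ (where $f\ge 0$ makes the argument clean, with no negative part to absorb), and handle the perturbation entirely through a single $H^k_{per}$-stability estimate plus Sobolev embedding. This is more modular and arguably simpler; the paper's intrinsic approach, by contrast, avoids constructing and analysing the auxiliary unperturbed flow and produces the bound on $f_\var^-$ by a more self-contained mechanism (which is perhaps closer in spirit to how the argument would extend to situations where a good unperturbed comparison solution is not readily available). Both proofs rest on the same ingredients --- Duhamel, Lemma~\ref{lem:spreadQ}, finitely many iterations until the ball covers the torus, and the uniform $H^k_{per}$ bounds from Proposition~\ref{prop1} --- just assembled in a different order.
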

\begin{proof}[Proof of Lemma~\ref{lmm4.4}] 
  Let $\bar\tau >0$ be the length of the time interval for which there
  exists a smooth solution to the perturbed Boltzmann equation
  \eqref{perturb} with perturbed initial datum $f_{0,\var}$ such that
  (in the following we omit the subscript $\var$ for the solution)
  $$
  \| f(t)\|_{H^k_{per}} \,\leq \C_k(M), \quad t\,\in\,[0,\bar\tau].
  $$ 
  We split the proof into three steps: first, we give a classical
  estimate on the loss term $L^R(f)$, second we establish an estimate
  of $f^-$ with respect to the amplitude of the perturbation
  $P_\var(f)$, and third we use the spreading properties of
  $Q^{R,+}(f,f)$ to prove that there exists $\hat\tau \in (0,\bar
  \tau)$ such that
  $$
  f(\hat\tau,v) > 0, \qquad v \in \mathcal{D}_L. 
  $$ 
\smallskip

\noindent {\bf Step~1.} Applying Proposition~\ref{prop1} for the
control of $\|f(t)\|_{L^1}$ on the time interval $[0,\bar\tau]$, we
get
\begin{equation}
\label{est1}
\|L^R(f)\|_{L^\infty} \,\leq\, C(R,B) \,\|f\|_{L^1} \,\leq \, C(R,B) \,M,
\end{equation}
which gives  for all $ t\,\in\,[0,\tau_0]$, with $\tau_0=\min\left\{\bar\tau, \ln 2 /(M\, C(R,B))\right\}$
\begin{equation}
\label{est1-1}
2\,\geq e^{ M\, C(R,B) \,t} \ge \,e^{-\int_0^t L^R(f(s))\,ds} \,\geq\, e^{-\,M\, C(R,B) \,t} \,\geq\, \frac{1}{2}.
\end{equation}
\smallskip

\noindent {\bf Step~2.} Let us split $f$ as $f=f^+ - f^-$, with
$f^\pm= \max\{0,\pm f\}$ and use the monotonicity of $Q^{R,+}$ for
nonnegative distribution functions; it follows that
\begin{eqnarray}
\nonumber
Q^{R,+}\left(f,f\right) &=& Q^{R,+}\left(f^+ - f^-,f^+ - f^-\right)
\\
\label{c:0}
&\geq & -\,\left[  Q^{R,+}\left( f^+,f^- \right) +  Q^{R,+}\left(f^-,f^+\right)\right].
\end{eqnarray}
On the time interval $[0,\tau_0]$, we apply Proposition \ref{prop1} to
estimate $\|f(t)\|_{L^1}$ and since $\|f^+(t)\|_{L^1} \leq
\|f(t)\|_{L^1}$; we get from Lemma \ref{lmm4.0}
\begin{equation}
\label{Qpfpfm}
\| Q^{R,+}\left( f^+,f^- \right)\|_{L^\infty}, \,\, \| Q^{R,+}\left( f^-,f^+ \right)\|_{L^\infty} \,\,\leq\,\, \C_\infty(R,B)\, M\, \|f^-(t)\|_{L^\infty}, 
\end{equation}
which yields using (\ref{c:0})
\begin{equation}
\label{QplusLinf}
Q^{R,+}\left(f,f\right) \,\geq\, -2\, \C_\infty(R,B)\, M\, \|f^-(t)\|_{L^\infty}.
\end{equation}
Thus from the Duhamel representation of the solution $f$, we have for
$v\in\mathcal{D}_L$,
\begin{eqnarray*}
  f(t,v) &=& f_{0,\var}(v) \,e^{-\int_0^t L^R(f(s))(v)\,ds} 
  \\
  && + \int_0^t \left[ Q^{R,+}(f(s),f(s)) + P_\var(f(s))\right](v)\,e^{-\int_s^t L^R(f(u))(v)\,du}\,ds,
  \\
  &\geq & - 2 \, \psi(\var) - \int_0^t \left[ Q^{R,+}(f(s),f(s)) + P_\var(f(s))\right](v)\,e^{-\int_s^t L^R(f(u))(v)\,du}\,ds.
\end{eqnarray*}
Hence, we get from the lower estimate (\ref{QplusLinf}) of
$Q^{R,+}(f,f)$ and the smallness assumption (\ref{hyp3:P}) of the
perturbation $P_\var(f)$, for all $v\in\mathcal{D}_L$
$$
f^-(t,v) \,=\, \max\{0,-f(t,v)\} \,\leq\, 2 \, \psi(\var) + 
2\, \int_0^t \left( 2\, \C_\infty(R,B)\, M\, \|f^-(s)\|_{L^\infty} + \varphi(\var) \right) \,ds.
$$
Finally, we take the supremum in $v\in\mathcal{D}_L$ and apply
Gronwall's lemma to get for any $0\leq\tau\leq \tau_0$
\begin{equation*}
\| f^-(t)\|_{L^\infty} \,\leq\, \big( 2 \, \psi(\var) 
+ 2 \, \tau_0 \, \varphi(\var) \big) \,
e^{4\, \C_\infty(R,B)\, M\,t}:=\eta(\var), \quad t\in[0,\tau_0],  
\end{equation*}
which proves (\ref{c::1}) on the time interval $[0,\tau_0]$.
\smallskip

\noindent {\bf Step~3.} Let us prove that there exists $\hat\tau \in
(0,\tau_0)$ such that
$$
f(\hat\tau,v) >0, \quad v\in\mathcal{D}_L. 
$$
We start again with the Duhamel representation of the solution
\begin{eqnarray*}
f(t,v) &=& f_{0,\var}(v) \,e^{-\int_0^t L^R(f(s))(v)\,ds} 
\\
&& + \int_0^t \left[ Q^{R,+}(f(s),f(s)) + P_\var(f(s))\right](v)\,e^{-\int_s^t L^R(f(u))(v)\,du}\,ds,
\end{eqnarray*}
but we now take into account the fact that the first term is
essentially positive and we use the spreading property of the operator
$Q^{R,+}$ (Lemma~\ref{lem:spreadQ}).

On the one hand since the initial datum is smooth enough ($k > d/2$ is
large enough such that $f_{0,\var}$ is H\"older), there exists an
explicit $\delta>0$ depending on $\C_k(M)$ such that for $\var$ small
enough, there exists $v_0\in\mathcal{D}_L$ such that
$$
f_{0,\var}(v) \,\,\geq\,\, \frac{\eta}{2} \,{\bf 1}_{B(v_0,\delta)}(v)
- \psi(\var), \quad{\rm with \,\,} 
\eta \,=\, \frac{\|f_0\|_{L^1}}{(2L)^d}.
$$

On the other hand, using the lower bound on the gain operator
(\ref{QplusLinf}) and the estimate (\ref{c::1}) of $f^-(t)$, it gives
$$
Q^{R,+}\left(f,f\right) \,\geq\, - 2\, \C_\infty(R,B)\,
M\,\eta(\var) 
$$  
Finally, using (\ref{est1-1}) and the smallness assumption on
$P_\var(f)$, it first yields for any $0<\tau<\tau_0$
$$
f(t,v) \,\geq\,  A_0\,{\bf 1}_{B(v_0,\delta)}(v) \, 
- \, \varphi_1(\var),
$$
with 
\begin{equation}
\label{A0B0}
A_0 : = \frac{\eta}{4}, \qquad 
\varphi_0(\var) : =  2 \, \tau_0 \, 
\big( 2\, \C_\infty(R,B)\, M\, \eta(\var) + \varphi(\var) \big) \,+\, \psi(\var).
\end{equation}
Now we choose $\var_1>0$ small enough such that
$$
0\,<\, \frac{A_0}{2} \,\leq\, A_0 -  \varphi_0 (\var_1) \quad
\Longleftrightarrow \quad\varphi_0 (\var_1) \,\leq\,\frac{A_0}{2}. 
$$
Thus, we get on any  time intervall $[0,\tau]\subset[0,\tau_0]$
$$
f^+(t,v) \,\geq\, \frac{A_0}{2}\, {\bf 1}_{B(v_0,\delta)}(v).
$$
Hence, using the spreading properties of the operator $Q^{R,+}(f,f)$
of Lemma~\ref{lem:spreadQ} and the monotonicity of $Q^{R,+}$ for
nonnegative distribution functions, it follows that
\begin{eqnarray*}
Q^{R,+}(f^+,f^+) &\geq& \frac{A_0^2}{4}\,
Q^{R,+}\big({\bf 1}_{B(v_0,\delta)},{\bf 1}_{B(v_0,\delta)}\big) \\
&\geq& \frac{A_0^2}{4}\,C_0 \, {\bf 1}_{B(v_0, \mu \, \delta)}.
\end{eqnarray*}
Next, we again use the uniform bounds previously established in
\eqref{Qpfpfm} on $Q^{R,+}(f^+,f^-)$ and $Q^{R,+}(f^-,f^+)$:
\begin{eqnarray*}
Q^{R,+}(f,f) &\geq& Q^{R,+}(f^+,f^+) - Q^{R,+}(f^-,f^+) -Q^{R,+}(f^+,f^-) \\
&\geq& \frac{A_0^2}{4}\,C_0 \, {\bf 1}_{B(v_0, \mu \, \delta)} 
- 2\, \C_\infty(R,B)\, M\,\big( \eta(\var) + \varphi_0(\var) \big). 
\end{eqnarray*}
Finally, from the smallness assumption (\ref{hyp3:P}) of the
perturbation $P_\var(f)$, it yields for $t\in[\tau/2, \tau]$ and with
(\ref{est1-1})
\begin{eqnarray*}
f(t,v) &\geq & - \psi(\var) 
+ \int_0^t \left[ Q^{R,+}(f(s),f(s)) 
+ P_\var(f(s))\right](v)\,e^{-\int_s^t L^R(f(u))(v)\,du}\,ds,
\\
&\geq &  \frac{\tau}{2}\,\frac{A_0^2}{4}\,C_0 \, 
  {\bf 1}_{B(v_0,\mu \, \delta)} -\, 2 \, \tau\, \left[ 2\, \C_\infty(R,B)\,
  M\, \big( \eta(\var) + \varphi_0 (\var) \big)\,+\, \varphi(\var) \right]
- \psi(\var) \\
& = & A_1 \,{\bf 1}_{B(v_0, \mu \, \delta)} \,-\, \varphi_1(\var)
\end{eqnarray*}
with 
$$
A_1  = \frac{\tau}{8}\,A_0^2\,C_0, \qquad 
\varphi_1(\var) := 2 \, \tau_0\, \left[ 2\, \C_\infty(R,B)\,
  M\, \big( \eta(\var) + \varphi_0 (\var) \big)\,+\, \varphi(\var) \right]
- \psi(\var) .
$$

Now, we proceed by induction: assume that there exists
$(A_j,\var_j,\varphi_j)$ such that on the time interval
$[\tau-\tau/2^{j},\tau] \subset [0,\tau_0]$ and for $\var \in (0,
\var_j)$, we have
$$
f(t,v) \,\,\geq \,\, A_{j}\, \,{\bf 1}_{B(v_0,
  \mu^{j}\,\delta)}(v) \,-\,
\varphi_{j}(\var),
$$
where
$$
A_j := \left( \frac{\tau}{8}\right)^{2^j-1}\,A_0^{2^j} \,C_0 ^{2^{j-1}-1} 
$$
and $\varphi_j(\var) \to 0$ as $\var \to 0$. 

Using the same method as before, we first set $\var_{j+1}$ such that
$$
f^+(t,v) \,\geq\, \frac{A_j}{2} {\bf 1}_{B(v_0, \mu^{j}\,\delta)}
$$
and prove that for $t\in[\tau-\frac{\tau}{2^{j+1}},\tau]$ 
\begin{eqnarray*}
Q^{R,+}(f,f)  &\geq& \,  \left(\frac{A_j}{2}\right)^2 \,C_0 
\,{\bf 1}_{B(v_0, \mu^{j+1}\,\delta)} 
- \, 2\, \C_\infty(R,B)\, M\, \big( \eta(\var) + \varphi_{j}(\var) \big). 
\end{eqnarray*}
Then, from the Duhamel formula and the smallness assumption of
$P_\var(f)$, we finally get the following lower bound
$$
f(t,v)\,\, \geq\, \, A_{j+1} \,{\bf 1}_{B(v_0, \mu^{j+1}\,\delta)} 
\,-\, \varphi_{j+1}(\var),
$$ 
with 
$$
A_{j+1}  = \frac{\tau}{8}\,A_j ^2\,C_0, \qquad 
\varphi_{j+1}(\var) := 2 \, \tau_0\, \left[ 2\, \C_\infty(R,B)\,
  M\, \big( \eta(\var) + \varphi_{j} (\var) \big)\,+\, \varphi(\var) \right]
- \psi(\var) .
$$

Since $\mu>1$, the ball $B(v_0,\mu^{j}\,\delta)$ eventually
recovers the periodic box $[-L,L]^d$ {\it i.e.}, for some $J$ large
enough: $[-L,L]^d\subset B(v_0,\mu^{J}\,\delta)$, and for all $t\in
[\tau-\frac{\tau}{2^J},\tau]$, by applying $J$'s times the previous
induction we get for $\var \in (0,\var_{J})$:
$$
f(t,v) \,\,\geq \,\, A_{J} \,{\bf 1}_{B(v_0,\mu^{J}\,\delta)}(v)
\,-\, \varphi_{J}(\var).
$$

Finally, up to reducing $\var$ further, we have proved that there exists $(\hat\tau,\hat\var)$ 
which only depend on the collision kernel $B$, the initial datum $f_0$, $L$ and the perturbation function 
$\varphi= \varphi(\var)$ such that for all $0 < \var < \hat\var$,
$$
\forall \, v\in \mathcal{D}_L, \quad f(\hat\tau,v) > 0. 
$$
\end{proof}


\subsection{Existence and regularity on a bounded time interval}

\begin{proposition} 
\label{lem:bdedtime}

Let us consider a fixed time $T >0$, a collision kernel $B$ which
satisfies the assumptions
\eqref{HSkernel}-\eqref{hyp:phi}-\eqref{hyp:theta}, a truncation $Q^R$
defined by (\ref{eq:QRclass}) or (\ref{eq:QRfast}), and a sequence of
smooth balanced perturbations $\big(P_\var = P_\var(f)\big)_{\var >
  0}$ which satisfy~\eqref{hyp1:P}-\eqref{hyp2:P}-\eqref{hyp3:P}.

We assume that $f_0$ is a non-negative function, not zero everywhere,
and such that $f_0 \in H^k _{per}(\mathcal{D}_L)$ with $k\in\N$ and $k
> d/2$.  We define $M=2\,\|f_0\|_{L^1}$ and $(f_{0,\var})_{\var >0}$ a
sequence of smooth perturbations of $f_0$ (which is non
necessarily positive) such that
$$
\int_{\mathcal{D}_L} f_{0,\var} = \int_{\mathcal{D}_L} f_0 \quad
  \mbox{ and } \quad  \| f_0 - f_{0,\var} \|_{H^k_{per}} \,\,\leq \,\, \psi(\var)
$$
where $\psi(\var)$ goes to zero when $\var$ goes to zero.

Then, there exists $\hat \var >0$, which only depends on the
$H^k_{per}(\mathcal{D}_L)$ and $L^1(\mathcal{D}_L)$ norms of $f_0$,
such that for all $\var \in (0,\hat\var)$,
\begin{itemize}

\item[$(i)$] there is a unique smooth solution
  $f_\var=f_\var(t,\cdot)$ on $[0,T]$ to the perturbed equation
  \eqref{perturb} with initial datum $f_{0,\var}$;

\item[$(ii)$] this belongs to $H_{per}^k(\mathcal{D}_L)$ (with bound
  growing at most exponentially);

\item[$(iii)$] there is some explicit $\eta(\var) > 0$ (with
  $\eta(\var) \rightarrow 0$ as $\var$ goes to $0$) such that the
  non-positive part is $\eta(\var)$-small:
\[ 
\forall \, t \in [0,T], \quad \| f_\var^-(t,\cdot) \|_{L^\infty} \le
\eta(\var)
\]
where $f_\var^-$ denotes $\max \{ 0, - f_\var\}$; 

\item[$(iv)$] this solution satisfies for any $p<k$
$$
\forall \, t \in [0,T], \quad \| f(t,\cdot) - f_{\var}(t,\cdot)
\|_{H^p_{per}} \,\,\leq \,\, \bar \varphi(\var)
$$
where $f(t,\cdot)$ is the solution of unperturbed periodized Boltzmann
equation (\ref{unperturb}), and $\bar \varphi(\var)$ is another
explicit function which goes to zero as $\var$ goes to zero. Hence up
to reducing $\var$, the perturbed solution remains close to the
unperturbed solution on the finite time interval on which we have
constructed it.
\end{itemize} 
\end{proposition}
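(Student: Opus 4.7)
The plan is to iterate the local-in-time results already established --- existence/uniqueness (Proposition~\ref{prop1}), the regularity bootstrap (Lemma~\ref{lem:l1hk}), and positivity recovery (Lemma~\ref{lmm4.4}) --- so as to cover the entire interval $[0,T]$. The mechanism that makes the iteration work uniformly is exact mass conservation: the weak form \eqref{eq:QRweak} with $\varphi \equiv 1$ yields $\int Q^R(f,f)=0$, and the balanced condition \eqref{hyp1:P} yields $\int P_\var(f_\var)=0$, so
\[
\int_{\mathcal{D}_L} f_\var(t,v)\,dv \,=\, \int_{\mathcal{D}_L} f_{0,\var}(v)\,dv \,=\, \|f_0\|_{L^1}
\]
for all admissible $t$.

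First, Proposition~\ref{prop1} with $M=2\|f_0\|_{L^1}$ provides a smooth solution on $[0,\bar\tau]$ with $\|f_\var(t)\|_{L^1}\le M$, and Lemma~\ref{lmm4.4} supplies a time $\hat\tau\in(0,\bar\tau)$ depending only on $M$, $R$ and $B$ such that $f_\var(\hat\tau,\cdot)>0$ pointwise and $\|f_\var^-(t)\|_{L^\infty}\le \eta(\var)$ on $[0,\hat\tau]$. Because $f_\var(\hat\tau)$ is strictly positive, mass preservation forces $\|f_\var(\hat\tau)\|_{L^1}=\|f_0\|_{L^1}$, so we may restart the argument at $t=\hat\tau$ using $f_\var(\hat\tau)$ itself as the new initial datum (so the analog of $\psi(\var)$ vanishes for the restart), and reapply Proposition~\ref{prop1} and Lemma~\ref{lmm4.4} with the same $M$ and thus the same $\hat\tau$. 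Iterating $N=\lceil T/\hat\tau\rceil$ times covers $[0,T]$; at each restart, the required smallness on $\var$ is tightened by a factor of the form $e^{C\hat\tau}$ coming from the Gronwall step inside Lemma~\ref{lmm4.4}, and the final threshold $\hat\var$ is obtained as the minimum over the $N$ stages. This yields items $(i)$ and, via Lemma~\ref{lem:l1hk} applied on each subinterval, item $(ii)$ with an $H^k_{per}$-bound growing at most exponentially in time. The iterated version of estimate~\eqref{c::1} (with vanishing restart contribution) gives item $(iii)$, for a new (still vanishing) function $\eta(\var)$.

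For item $(iv)$, set $g_\var := f - f_\var$ where $f$ solves the unperturbed equation \eqref{unperturb} with datum $f_0$. Then
\[
\partial_t g_\var \,=\, Q^R(g_\var, f) + Q^R(f_\var, g_\var) - P_\var(f_\var),
\]
with $\|g_\var(0)\|_{H^p_{per}}\le \psi(\var)$. Using the Leibniz formula~\eqref{form:1} together with Lemma~\ref{lmm4.0} to control $Q^R$ in $H^p_{per}$, the uniform $H^k_{per}$-bounds on $f$ and $f_\var$ from $(ii)$ (available for any $p<k$), and the smallness~\eqref{hyp3:P} of $P_\var(f_\var)$, a standard Gronwall argument on $\|g_\var\|_{H^p_{per}}^2$ produces
\[
\|g_\var(t)\|_{H^p_{per}} \,\le\, \bar\varphi(\var), \qquad t\in[0,T],
\]
with $\bar\varphi(\var)\to 0$ as $\var\to 0$.

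The main obstacle is ensuring that the positivity-recovery mechanism of Lemma~\ref{lmm4.4} can be restarted uniformly, without a degradation of the basic time step $\hat\tau$ or of the $\eta(\var)$-smallness that would prevent reaching the fixed time $T$. This is resolved by the exact mass preservation noted above: it pins $\|f_\var(k\hat\tau)\|_{L^1}$ to $\|f_0\|_{L^1}$ at every restart, so that $M$ --- and therefore $\hat\tau$ --- does not drift; only the finite number $N\sim T/\hat\tau$ of iterations enters the final smallness threshold $\hat\var$, and everything remains explicit in terms of $T$, $R$, $B$, and the $L^1$ and $H^k_{per}$ norms of $f_0$.
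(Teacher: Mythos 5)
Your proposal is correct and follows essentially the same route as the paper: iterate Proposition~\ref{prop1} and Lemma~\ref{lmm4.4} over subintervals of length $\hat\tau$, using strict positivity at each restart time together with exact mass conservation (from \eqref{eq:QRweak} and \eqref{hyp1:P}) to pin the $L^1$ norm back to $\|f_0\|_{L^1}$ so that $M$ and $\hat\tau$ do not drift, and then conclude $(iv)$ by a bilinear splitting of $Q^R(f,f)-Q^R(f_\var,f_\var)$ and Gronwall. Your identification of the uniform restart of the positivity mechanism as the crux, and its resolution via mass preservation, matches the paper's argument exactly.
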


\begin{proof} 
We set 
\begin{equation}
\label{Mkt}
M_k(T) := \|f_0\|_{H^k_{per}}\, e^{\C\,M\,T}.
\end{equation} 
First, applying Proposition~\ref{prop1} we have proven that there exists a small $\bar\tau>0$ such that the perturbed Boltzmann equation (\ref{perturb}) admits an unique smooth solution on the time interval $[0,\bar\tau]$ with
$$
\|f_\var(t) \|_{L^1} \,\leq \, M
$$
and
$$
\|f_\var(t) \|_{H^k_{per}} \,\leq \, M_k(\bar\tau) \,\leq M_k(T).
$$
Moreover from Lemma~\ref{lmm4.4}, there exist $\hat\tau\leq\bar\tau$ and $\hat\var>0$, only depending on $M$, $M_k(\bar\tau)<M_k(T)$, $R$ and the collision kernel $B$ such that for all $0<\var<\hat\var$,
$$
\forall \, v  \in \mathcal{D}_L, \quad f_\var(\hat\tau,v) > 0,
$$
$$
\| f^-(t)\|_{L^\infty} \,\leq\, \eta(\var), \quad t\in[0,\hat\tau].  
$$
and
$$ 
\forall \, t \in [0,\hat \tau], \quad \| f(t) \|_{H^k _{per}} \,\leq \|f_0\|_{H^k_{per}}\, e^{\C\,M\,2\,\hat\tau}. 
$$

Then, from the preservation of mass under the action of $Q^R$ and $P_\var$:
$$
\int_{\mathcal{D}_L} Q^{R}(f_\var,f_\var)(v)\,dv\,=\, \int_{\mathcal{D}_L} P_{\var}(f_\var)(v)\,dv\,=\, 0,
$$
we have that
$$
\int_{\mathcal{D}_L} f_\var(\hat\tau,v)\,dv \,=\,\int_{\mathcal{D}_L} f_0(v)\,dv. 
$$
Since $f_0$ is a non negative function, it gives that at time $\hat\tau$
$$
\| f_\var(\hat\tau) \|_{L^1} \,=\,\| f_0 \|_{L^1},
$$
and on the time interval $t\,\in\,[0,\hat\tau]$ we have
$$
\| f_\var(t) \|_{L^1} \,\leq\, M \quad{\rm and}\quad \| f_\var(t) \|_{H^k_{per}} \,\leq\, M_k(T).
$$
Therefore, we consider the perturbed Boltzmann equation (\ref{perturb}) starting from $f_\var(\hat\tau)$ as initial data. On the time interval $[\hat\tau,2\,\hat\tau]$, we apply Proposition~\ref{prop1} and get that
$$
\|f_\var(t)\|_{L^1} \,\leq\, 2 \, \|f_\var(\hat\tau) \|_{L^1} \,=\,  2 \, \|f_0 \|_{L^1}\,=\,M, \quad \forall\,t\,\in\,[\hat\tau,2\,\hat\tau] 
$$ 
and
\begin{eqnarray*}
\|f_\var(t)\|_{H^k_{per}}  &\leq& \|f_\var(\hat\tau)\|_{H^k_{per}}\, e^{\C\,M\,(2\,\hat\tau\,-\,\hat\tau)} 
\\
&\leq& \|f_0\|_{H^k_{per}}\, e^{\C\,M\,2\,\hat\tau}  
\\
&\leq& \|f_0\|_{H^k_{per}}\, e^{\C\,M\,T} \,=\, M_k(T),\quad \forall\,t\,\in\,[\hat\tau,2\,\hat\tau]. 
\end{eqnarray*}
Moreover, since $\hat\tau$ only depends on $M$, $M_K(T)$, $B$ and $R$, we can again apply Lemma~\ref{lmm4.4} on the time intervall $[\hat\tau,2\,\hat\tau]$, which yields that
$$
f_\var(2\,\hat\tau) >0.
$$
We finally proceed by induction to prove existence and uniqueness of a smooth solution $f_\var$ of the perturbed Boltzmann equation (\ref{perturb}) on the time interval $[0,T]$, which proves assertions $(i)$, $(ii)$ and $(iii)$.

To prove (iv), we compute the difference between the solution $f(t)$ to the unperturbed 
problem \eqref{unperturb} and the solution $f_\var(t)$ to \eqref{perturb}:
$$
\frac{\partial (f - f_\var)}{\partial t} = \frac{1}{2}\left( Q^R(f-f_\var,f+f_\var) + Q^R(f+f_\var,f-f_\var)\right)  + P_\var(f_\var).
$$
Then, using the smoothness of $f$ and $f_\var$, we have  from Lemma~\ref{lmm4.1} for any $p<k$ 
$$
\|Q^R(f-f_\var,f+f_\var) \|_{H^p_{per}}, \,\, \|Q^R(f+f_\var,f-f_\var) \|_{H^p_{per}}\,\,\leq\,\, \C_p(M)\,\|f+f_\var\|_{H^p_{per}} \,\|f-f_\var\|_{H^p_{per}} 
$$ 
and since the perturbation is small (assumption \eqref{hyp3:P})
$$
\| P_\var(f) \|_{H^p_{per}}\,\, \le\,\, \varphi(\var),
$$
it yields  that for all $t\,\in [0,T]$
$$
\|f(t)-f_\var(t)\|_{H^p_{per}} \,\leq  \bar \varphi(\var)
$$
for some function $\bar \varphi(\var)$ going to zero as $\var$ goes to zero.
\end{proof} 

\section{Asymptotic behavior and global in time stability}\label{sec5}
\setcounter{equation}{0}

In this section we shall study the asymptotic behavior of the (unperturbed) 
periodized Boltzmann equation \eqref{unperturb} based on a regularity study (in the spirit of \cite{MV:04}) and 
the entropy -- entropy production theory (mainly relying on the method developed 
in \cite{Vill:cerc}). Finally on the basis of these results 
we shall prove a global in times stability result for the perturbed equation \eqref{perturb}.

\subsection{Regularity study of the periodized Boltzmann equation} 

Let us prove the following result 
\begin{proposition} 
\label{prop:reg} 
Let us consider $0 \le f_0 \in L^1(\cal{D}_L)$ such that $f_0 \in H^k_{per}(\cal{D}_L)$ for some 
$k \ge 0$. Then there is a constant $\C > 0$ depending on the $L^1$ and $H^k_{per}(\mathcal{D}_L)$ norms 
of $f_0$ such that the unique global non-negative solution $(f(t))_{t \ge 0}$ to the periodized 
equation (\ref{unperturb}) satisfies 
$$
\forall \, t \ge 0, \quad \| f(t) \|_{H^k_{per}} \le \C. 
$$ 
\end{proposition}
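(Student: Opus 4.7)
The plan is to couple mass conservation with a coercive energy estimate at each Sobolev level, and induct on $k$. First, since the weak formulation \eqref{eq:QRweak} applied with test function $\varphi\equiv 1$ gives $\int_{\mathcal{D}_L} Q^R(f,f)\,dv = 0$, the global nonnegative solution preserves mass: $\|f(t)\|_{L^1} = \|f_0\|_{L^1} =: M_0$ for all $t\ge 0$. This upgrades the exponentially-growing $L^1$ control used in Lemma~\ref{lem:l1hk} to a uniform one, which is the crucial improvement over the generic existence theory already established for the perturbed equation.

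Next, I would establish a pointwise lower bound $L^R(f)(v)\ge\nu_0>0$ uniformly in $v\in\mathcal{D}_L$. In the periodic setting with $R\ge\sqrt 2\,L$, the collision integration defining $L^R(f)$ covers the full torus, and wrapping $\mathcal{B}(\cdot,\cdot)$ around $\mathcal{D}_L$ produces a periodized convolution kernel $\tilde w$ which, although the raw kernel $\mathcal{B}^{class}$ vanishes at zero relative velocity, is strictly positive on the torus thanks to contributions from neighbouring periodic copies. A compactness argument then supplies a positive minimum $\tilde w_{\min}$, yielding $L^R(f)(v)\ge\tilde w_{\min} M_0$ uniformly in $v$.

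With these two ingredients, I would proceed by induction on $k$. For $|\alpha|=k$, differentiate the equation, pair with $\partial^\alpha f$, and use the Leibniz formula already exhibited in the proof of Lemma~\ref{lem:l1hk}:
$$
\partial^\alpha Q^R(f,f) \;=\; \sum_{\beta\le\alpha}\binom{\alpha}{\beta}Q^R(\partial^\beta f,\partial^{\alpha-\beta}f).
$$
The diagonal term $\beta=0$, split as $Q^R(f,\partial^\alpha f)=Q^{R,+}(f,\partial^\alpha f)-L^R(f)\,\partial^\alpha f$, produces the coercive contribution $-\int L^R(f)|\partial^\alpha f|^2\,dv\le -\nu_0\|\partial^\alpha f\|_{L^2}^2$, together with a gain contribution bounded by $C(R,B)M_0\|\partial^\alpha f\|_{L^2}^2$ via Lemma~\ref{lmm4.0} with $p=2$; the other diagonal term $\beta=\alpha$ is treated symmetrically (after an integration by parts or a bilinear symmetrization). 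The off-diagonal terms $0<|\beta|<k$ are handled by Lemma~\ref{lmm4.0} combined with Gagliardo--Nirenberg interpolation on the torus, producing bounds in terms of $\|f\|_{H^{k-1}_{per}}$, which is uniform by the inductive hypothesis. Summing over $|\alpha|\le k$ yields a differential inequality of the form
$$
\frac{d}{dt}\|f\|_{H^k_{per}}^2 \;\le\; -c\,\|f\|_{H^k_{per}}^2 \;+\; C,
$$
from which Gronwall gives the uniform-in-time bound.

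The main obstacle is the coercivity step: ensuring that the dissipation produced by the loss term strictly dominates the expansion produced by $Q^{R,+}$ at top order, using only conservation of mass and nonnegativity. This rests entirely on the uniform positivity of $L^R(f)$, which is a genuine feature of the periodized geometry (the wrapping of $\mathcal{B}$ around the torus kills the degeneracy at zero relative velocity) and would fail on $\R^d$. If the bare comparison $\nu_0$ versus $CM_0$ is too tight, one would refine the gain-term estimate using the weak form \eqref{eq:QRweak} and a Povzner-type symmetrization, or alternatively invoke the entropy production machinery of Section~\ref{sec5} to feed back an additional dissipative quantity — in either case, the crucial input remains the uniform mass conservation combined with the periodized structure.
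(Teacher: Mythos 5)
Your skeleton---uniform $L^1$ control from mass conservation plus a coercive loss term at top Sobolev order---is indeed the Mouhot--Villani scheme that the paper invokes, but the step you yourself flag as ``the main obstacle'' is a genuine gap, and the paper's Subsection~5.1 exists precisely to fill it. In your top-order energy estimate the dangerous term is $\int Q^{R,+}(f,\partial^\alpha f)\,\partial^\alpha f\,dv$, which Lemma~\ref{lmm4.0} only bounds by $\C_2(R,B)\,M_0\,\|\partial^\alpha f\|_{L^2}^2$, while the loss term contributes $-\nu_0\|\partial^\alpha f\|_{L^2}^2$ with, at best, $\nu_0=\tilde w_{\min}M_0$. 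Since $\tilde w_{\min}$ is controlled by the same kernel that produces $\C_2(R,B)$, there is no reason for $\nu_0>\C_2(R,B)\,M_0$; generically the net coefficient is positive and Gronwall returns exactly the exponentially growing bound of Lemma~\ref{lem:l1hk}, which is not enough. The missing idea is the \emph{regularization property of the gain operator}: the paper splits $Q^{R,+}=Q^{R,+}_s+Q^{R,+}_{ns}$, proves the Lions/Radon-transform estimate $\|Q^{R,+}_s(f,f)\|_{H^{r+(d-1)/2}_{per}}\le C\,\|f\|_{L^1}\|f\|_{H^r_{per}}$, and shows that the non-smooth remainder contributes only $\delta\,\|f\|_{L^1}\|f\|_{H^k_{per}}$ with $\delta$ arbitrarily small (at the price of a large constant in front of a \emph{lower} norm). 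Only then is the top-order gain contribution of the form $\delta M_0\|f\|_{H^k_{per}}^2+C_\delta\cdot(\text{lower order})$, which the loss term can absorb. Neither a Povzner-type symmetrization (a moment device) nor the entropy production functional of Section~\ref{sec5} supplies this smoothing; your two proposed fallbacks do not address the difficulty.

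A second, related problem is your lower bound $L^R(f)\ge\nu_0$. The periodized kernel satisfies $\tilde w(0)=\sum_{n\neq 0}\mathbf{1}_{\{|2Ln|\le R\}}\,c\,|2Ln|^\gamma$, which vanishes unless $R\ge 2L$; the paper only assumes $R\ge\sqrt2\,L$, so the ``wrapping kills the degeneracy at zero relative velocity'' argument fails in general, $\tilde w$ still behaves like $|u|^\gamma$ near $u=0$, and concentration of $f$ near a point $v$ makes $L^R(f)(v)$ small there despite mass conservation. The standard repair (as in \cite{MV:04}) is a non-concentration estimate of the type $L^R(f)(v)\ge c\,r^\gamma\bigl(M_0-C\,r^{d/p'}\|f\|_{L^p}\bigr)$, which requires a uniform-in-time $L^p$ bound for some $p>1$ to be established \emph{first}---and that bound is itself obtained from the $L^p$ version of the gain-regularization estimates (Corollary~\ref{cor1} and the subsequent decomposition lemma). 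So both missing pieces reduce to the same one: without the smoothing estimates on $Q^{R,+}$ your induction cannot be closed at any level, including $k=0$.
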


We proceed as in \cite{MV:04}. In particular we shall 
extend Lions regularity result on $Q^+$ to the truncated case $Q^{R,+}$ \cite{lions1, lions2}. 
Hence we shall first prove the regularity property on the gain operator when the collision kernel is smooth 
and compactly supported, avoiding cancellations at zero relative velocities. 
Then, we shall include the non-smooth part of the kernel using the loss operator. 

We shall split the collision kernel into a smooth and a non-smooth
part. As a convention, we shall use subscripts ``$s$'' for smooth and
``$ns$'' for the non-smooth parts.  In terms of the classical
truncation (\ref{eq:QRclass}) we set
\begin{equation*}
\left\{
\begin{array}{l}  
\mathcal{B}^R_s(|g|,\cos \theta)= \mathcal{B} (|g|,\cos
  \theta)\,\,\chi^R_\eta(|g|)\, \, \zeta_\eta(|g|) \, \, 
  \Theta_\eta\left(\cos \theta \right),
  \vspace{0.2cm} \\
  \mathcal{B}^R_{ns}(|g|,\cos \theta) = \mathcal{B}^R(|g|,\cos
  \theta) - \mathcal{B}^R_s(|g|,\cos \theta),
\end{array}
\right.
\end{equation*}
where $\chi^R _\eta(g)$ is the mollified $C^\infty$ version of ${\bf
  1}_{|g| \le R}$, $\zeta_\eta(g)$ is the mollified $C^\infty$ version of
${\bf 1}_{|g|\ge \eta}$, and $\Theta_\eta$ is a $C^\infty$ function on
$[-1,1]$ which is $1$ on $-1+2\eta\leq u \leq 1-2\eta$, and $0$ in
$[-1,-1+\eta)$ and $(1-\eta,1]$ (the parameter $\eta$ is the
mollification parameter).

In terms of the ``fast'' truncation (\ref{eq:QRfast}) we set
\begin{equation*}
  \left\{
    \begin{array}{l}
  \mathcal{B}^R_s(|y|,|z|) = \mathcal{B}
  (|y|,|z|)\,\,\chi^R_\eta(|z|)\,\, 
  \chi^R_\eta(|y|)\, \, \zeta_\eta(|z+y|) \, \,  
  \Theta_\eta\left(\frac{|y|}{\sqrt{|y|^2+|z|^2}}\right),
  \vspace{0.3cm}\\
  \mathcal{B}^R_{ns}(|y|,|z|) = 
  \mathcal{B}^R(|y|,|z|) - \mathcal{B}^R_s(|y|,|z|),
  \end{array}
  \right.
\end{equation*}
with the same notations.

We deduce the following decomposition of the collision operator:
$$
Q^{R,+} = Q^{R,+}_s + Q^{R,+}_{ns},
$$
where (for instance, with the variables from the ``fast'' truncation)
\begin{equation}
\label{smooth:Qp}
Q^{R,+}_s(f,f) = \int_{y\in \R^d}\int_{z\in \R^d} \mathcal{B}^R_s (|y|,|z|)\,\delta(y \cdot z)\, f(v+z) \,f(v+y)  \,dy\, dz.
\end{equation}
Under the assumption that both $\Phi$ and $b$ defined in
(\ref{HSkernel}) are smooth, the regularized truncature introduced
above ensures that there exist two functions $\Phi^R_\eta$ and
$b^R_\eta$ such that
\begin{equation}
\label{smooth:ker}
\left\{ 
\begin{array}{l}
B^R_\eta(|z|,\cos\theta)=\Phi_\eta^R(|z|)\,b^R_\eta(\cos\theta) \vspace{0.3cm} \\ 
\Phi^R_\eta \in C^\infty_0(\R^d \backslash \{0\}), 
\quad b^R_\eta \in C^\infty_0(-1,1). 
\end{array} 
\right.
\end{equation}
In the following lemma we shall prove the regularity property of $Q^{R,+}_s$.
\begin{lemma}
  Let $B^R_\eta(|v-v_\star|,\cos\theta)$ satisfy the assumption
  (\ref{HSkernel})-(\ref{hyp:theta}) and (\ref{smooth:ker}). Then, for
  all $r\in \R^+$
$$
\big\| Q^{R,+}_s(f,f) \big\|_{H^{r+\frac{d-1}{2}}_{per}}\ \ \leq\ \
C_{reg}(r,\mathcal{B}^R_s) \ \|f\|_{L^1} \ \|f\|_{H^r_{per}},
$$
where the constant $C_{reg}(r,\mathcal{B}^R_s)$ only depends on $r$
and on the collision kernel.
\end{lemma}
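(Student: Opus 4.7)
The approach is to adapt the classical Lions--Wennberg $(d-1)/2$-derivatives regularity gain for the Boltzmann gain operator to the truncated periodized setting. The smoothing mechanism---integration against the constraint $y\cdot z=0$, which effectively averages over a codimension-one variety---is preserved by the truncation, and the mollifiers $\chi^R_\eta$, $\zeta_\eta$, $\Theta_\eta$ built into $\mathcal{B}^R_s$ are designed precisely to exclude the degenerate configurations (vanishing relative velocities, extreme cosines, unbounded $y$ or $z$) where this gain could break down.

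First I would expand $f$ on the torus $\mathcal{D}_L$ in Fourier series, and use the translation invariance of $Q^{R,+}_s$ to derive the bilinear mode-coupling formula
$$\widehat{Q^{R,+}_s(f,f)}_k \;=\; \sum_{l+m=k} \beta_s(l,m)\, \hat f_l\, \hat f_m,$$
with symbol
$$\beta_s(l,m) \;=\; \int_{\R^{2d}} \mathcal{B}^R_s(|y|,|z|)\, \delta(y\cdot z)\, e^{i\frac{\pi}{L}(l\cdot y + m\cdot z)}\, dy\, dz.$$
By Plancherel, the $H^{r+(d-1)/2}_{per}$-norm of $Q^{R,+}_s(f,f)$ is the $\ell^2$-norm of these coefficients weighted by $(1+|k|^2)^{(r+(d-1)/2)/2}$. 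The core analytic step is then to prove the symbol decay estimate
$$|\beta_s(l,m)|\;\le\; C_{\mathcal{B}^R_s}\,\bigl(1+\min(|l|,|m|)\bigr)^{-(d-1)/2}.$$
To obtain this I would use the identity $\delta(y\cdot z)\,dz = |y|^{-1}\,dS_{y^\perp}(z)$, reducing the $z$-integral to an integral of the smooth compactly supported amplitude $z\mapsto \mathcal{B}^R_s(|y|,|z|)$ on the hyperplane $y^\perp$, tested against the oscillatory factor $e^{i(\pi/L)\,m_{\perp y}\cdot z}$, where $m_{\perp y}$ is the orthogonal projection of $m$ onto $y^\perp$. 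Iterated non-stationary phase on $y^\perp$ then yields decay of order $(1+|m_{\perp y}|)^{-(d-1)/2}$, with the symmetric decay in $l_{\perp z}$; a geometric bookkeeping argument---outside a small-measure set of $y$'s one has $|m_{\perp y}|$ comparable to $|m|$---upgrades this after the $y$-integration to the required bound in $\min(|l|,|m|)$.

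Given the symbol bound, the Sobolev estimate follows by a routine bilinear split. In the regime $|l|\le|m|$ one bounds $|\hat f_l|\le (2L)^{-d}\|f\|_{L^1}$ to factor out the $L^1$-norm, and applies Cauchy--Schwarz in $m$ against the weight $(1+|m|^2)^{r/2}|\hat f_m|$; the $(d-1)/2$ gain in $|l|$ makes the $l$-summation convergent, while the equivalent Sobolev weight on $k\sim m$ is absorbed by the $\|f\|_{H^r_{per}}$ factor, and the symmetric regime $|m|\le|l|$ is handled identically. The hard part will be the symbol estimate: the delta forces integration on a singular variety whose geometry varies with $y$, and tracking the $(d-1)/2$ gain through the $y$-integration is where the careful design of $\mathcal{B}^R_s$ enters decisively---it is what guarantees that no boundary term or stationary point can spoil the non-stationary phase argument. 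Once the symbol estimate is secured, the remainder reduces to standard bilinear Fourier estimates on the torus.
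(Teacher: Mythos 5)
Your plan diverges from the paper's (which writes $Q^{R,+}_s(f,f)=\int f(v')\,(\tau_{v'}\circ T\circ\tau_{-v'})f\,dv'$ as a superposition of translated Radon transforms and invokes the $H^r_{per}\to H^{r+\frac{d-1}{2}}_{per}$ mapping property of $T$ from Mouhot--Villani), and the divergence is where the gap lies. First, the symbol estimate you propose is the wrong quantity: the extra Sobolev weight $(1+|k|)^{(d-1)/2}$ with $k=l+m$ lives at the scale of $\max(|l|,|m|)$, so a bound $|\beta_s(l,m)|\le C(1+\min(|l|,|m|))^{-(d-1)/2}$ cannot absorb it. A two-mode test already kills it: for $\hat f=a\delta_0+b\delta_{m_0}$ the mode $k=m_0$ of $Q^{R,+}_s(f,f)$ carries $\beta_s(0,m_0)\,ab$, and the lemma forces $|\beta_s(0,m_0)|\lesssim(1+|m_0|)^{-(d-1)/2}$, whereas your bound gives no decay at all since $\min(0,|m_0|)=0$. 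Your heuristic for the rate is also off: the phase $m\cdot z$ on the flat hyperplane $y^\perp$ is linear, so non-stationary phase against the smooth compactly supported amplitude gives \emph{rapid} decay in $|m_{\perp y}|$ (there is no curvature producing a $(d-1)/2$ stationary-phase rate); the polynomial rate only appears after the angular $y$-integration, where the set $\{|m_{\perp y}|\le\lambda\}$ has angular measure $\sim(\lambda/|m|)^{d-1}$, yielding $|\beta_s(l,m)|\le C(1+\max(|l|,|m|))^{-(d-1)}$.

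Second, and more seriously, even this corrected (and stronger) pointwise bound does not close your ``routine bilinear split.'' In the regime $|l|\le|m|$ one gets $w_k|\beta_s(l,m)|\le C(1+|m|)^{r}(1+|m|)^{-(d-1)/2}$, and after factoring out $\|f\|_{L^1}$ and applying Cauchy--Schwarz the remaining sum is $\sum_{m\in\Z^d}(1+|m|)^{-(d-1)}$, which diverges for every $d\ge2$; no choice of Schur weights repairs this. The obstruction is structural: the $\frac{d-1}{2}$-derivative gain of the gain operator is an almost-orthogonality ($TT^*$) phenomenon for the Radon transform --- one needs the off-diagonal decay of $\langle Te_j,Te_{j'}\rangle$, not just the size of individual matrix elements --- and pointwise bounds on the kernel modes $\beta_s(l,m)$ discard exactly that cancellation. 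To make a Fourier-side proof work you would have to reprove the $T T^*$ estimate in the periodic setting, which is no longer ``standard bilinear Fourier estimates on the torus''; the paper's route through the operator $T$ and the estimate \eqref{res2} is the efficient way to package this.
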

\begin{proof}
  We closely follow the proof given by Lions \cite{lions1,lions2} and
  simplified and then reformulated in \cite{Wenn:94,MV:04}. Again the
  preservation of the translation invariance by the truncation is
  fundamental.  Starting from the collision operator in the form
  (\ref{smooth:Qp}) and performing a change of variable we get for
  $v\in\mathcal{D}_L$
$$
Q^{R,+}_s(f,f) = \int_{\R^d \times \R^d}
\tilde{\mathcal{B}}^R_s(|v^\prime_\star-v^\prime|,|v^\prime-v|)\,\delta\left((v^\prime_\star-v)
  \cdot (v^\prime-v)\right)\,f(v_\star^\prime) \,f(v^\prime)
\,dv^\prime_\star\, dv^\prime,
$$
where $\tilde{\mathcal{B}}_s^R$ only depends on $\mathcal{B}_s^R$.
Then we set \cite{wennberg}
$$
Tg(y) = \int_{y+y^\perp} \tilde{\mathcal{B}}_s^R(|z|,|y|) \,g(z) \,dz, \quad \tau_z g(\cdot) = g(\cdot-z), 
$$
where $$
y^\perp = \left\{ z \in \R^d, \quad z^t\cdot y = 0\right\}
$$
and easily get for $v\in\mathcal{D}_L$
$$
Q^{R,+}_s(f,f) \ \ = \ \ \int_{\R^d} f(v^\prime)\,\left(\tau_{v^\prime} \circ T \circ \tau_{-v^\prime}\right)f(v) \, dv^\prime.
$$ 
Now, we want to estimate Sobolev norms of $Q^{R,+}_s$ as a function
defined in the torus $\mathcal{D}_L$. Applying the Fubini theorem with
the discrete and Lebesgue measures and the Cauchy-Schwarz inequality,
it leads to estimate the Sobolev norms of the Radon transform $T$ on
the torus $\mathcal{D}_L$
\begin{equation}
\label{res1}
\big\| Q^{R,+}_s(f,f) \big\|_{H^{r+\frac{d-1}{2}}_{per}}^2 \ \ \leq \ \ \|f\|_{L^1} \,\int_{\R^d} |f(v^\prime)|\,\left\|\, \tau_{v^\prime} \circ T \circ \tau_{-v^\prime}\, f \,\right\|_{H^{r+\frac{d-1}{2}}_{per}}^2 \,dv^\prime.
\end{equation}

On the one hand, since the kernel $\tilde{\mathcal{B}}_s^R$ is
compactly supported in $y$ and $z$, the operator $T$ maps periodic
functions $g$ to a compactly supported function $Tg$ with $\supp\left(
  Tg \right) \subset B_R \subset\mathcal{D}_L$. Then, we can consider
$Tg$ as a function in the whole space $\R^d$ .

On the other hand, using the regularized truncations $\chi_\eta^R$ and
$\Theta_\eta$ and the smoothness of $\Phi$ and $b$ in
(\ref{HSkernel}), it yields that
$B^R_\eta=\Phi_\eta^R(|v-v_\star|)\,b_\eta^R(\cos\theta)$ with
$$
\Phi_\eta^R\in C^\infty_o(\R), \quad b_\eta^R \in C^\infty_o(-1,1). 
$$
Then, we can directly apply the result in \cite{MV:04}, where the
authors proved the following regularity estimates on the Radon
transform $T$ for smooth kernels
$$
 \| Tg \|_{H^{r+\frac{d-1}{2}}} \ \ \leq \ \ C_{reg}(r,{\mathcal{B}}_s^R) \, \| g\|_{H^r}
$$
for a function $g$ defined in $\R^d$. However in the proof of the
latter inequality, we can replace $g$ by the smooth and compactly
supported function $g\,\chi_\eta^R$ for which
$\supp\left(\chi_\eta^R\,g\right) \subset B(0,R)\subset
\mathcal{D}_L$. Thus, for all $g$ defined in the torus
$\mathcal{D}_L$, we get
\begin{equation}
\label{res2}
\| Tg \|_{H^{r+\frac{d-1}{2}}_{per}}\ \ =\ \| Tg \|_{H^{r+\frac{d-1}{2}}}\ \ \leq \ \ C_{reg}(r,\mathcal{B}_s^R)  \, \| g\|_{H^r_{per}}.
\end{equation}
Finally gathering (\ref{res1}) and (\ref{res2}), we obtain the result
$$
\big\| Q^{R,+}_s(f,f) \big\|_{H^{r+\frac{d-1}{2}}_{per}} \ \ \leq \ \
C_{reg}(r,{\mathcal{B}}_s^R) \|f\|_{L^1} \, \| f\|_{H^r_{per}}.
$$
\end{proof}
\begin{corollary} \label{cor1} Let $B^R_\eta(|v-v_\star|,\cos\theta)$
  satisfy the assumption (\ref{HSkernel})-(\ref{hyp:theta}) and
  (\ref{smooth:ker}). Then, for all $p\in (1,\infty)$
$$
\| Q^{R,+}_s(f,f) \|_{L^q}\ \ \leq \ \ C_{reg}(r,\mathcal{B}^R_s)\
\|f\|_{L^1} \ \|f\|_{L^p},
$$
with
$$
q = \left\{
\begin{array}{ll}
  \displaystyle{\frac{p}{2-\frac{1}{d} + p\left(\frac{1}{d}-1\right)}} & \textrm{ if } p \in(1,2]
  \\
  \displaystyle{p\,d }  & \textrm{ if } p \in[2,\infty).
\end{array}\right.
$$
\end{corollary}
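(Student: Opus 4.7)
My plan is to derive three endpoint estimates and then apply Riesz--Thorin interpolation to the linear operator $T_f g := Q^{R,+}_s(f,g)$ with $f$ held fixed; the $L^1$ norm of $f$ will be absorbed into the operator norm in every case, so setting $g=f$ at the end will produce the desired bilinear bound.

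First I would establish three $L^p$--$L^q$ estimates for $T_f$. At the endpoints $p=1$ and $p=\infty$, the proof of Lemma~\ref{lmm4.0} applies directly to the gain part of the smoothed operator $Q^{R,+}_s$ (whose kernel $\mathcal{B}^R_s$ is pointwise dominated by $\mathcal{B}^R$) and yields
\[
\|T_f g\|_{L^1} \,\le\, C\,\|f\|_{L^1}\,\|g\|_{L^1}, \qquad \|T_f g\|_{L^\infty} \,\le\, C\,\|f\|_{L^1}\,\|g\|_{L^\infty}.
\]
The midpoint estimate comes from the previous lemma applied with $r=0$, combined with the Sobolev embedding $H^{(d-1)/2}_{per}(\mathcal{D}_L) \hookrightarrow L^{2d}(\mathcal{D}_L)$ (which holds on the torus for $d \ge 2$ since $(d-1)/2 = d/2 - d/(2d)$), yielding
\[
\|T_f g\|_{L^{2d}} \,\le\, C\,\|f\|_{L^1}\,\|g\|_{L^2}.
\]

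Next I would apply Riesz--Thorin interpolation to $T_f$ separately on the two ranges. For $p \in [2,\infty)$, interpolating between $(L^2, L^{2d})$ and $(L^\infty, L^\infty)$ with parameter $\theta = 1 - 2/p$ gives $1/p = (1-\theta)/2$ and $1/q = (1-\theta)/(2d) = 1/(p\,d)$, so $q = pd$ as claimed. For $p \in (1,2]$, interpolating between $(L^1, L^1)$ and $(L^2, L^{2d})$ with $\theta = 2 - 2/p$ gives $1/p = (1-\theta) + \theta/2$ and
\[
\frac{1}{q} \,=\, (1-\theta) + \frac{\theta}{2d} \,=\, \frac{2d - 1 + p(1-d)}{p\,d},
\]
which rearranges to the stated $q = p/(2 - 1/d + p(1/d-1))$. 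Since every endpoint norm carries the same factor $\|f\|_{L^1}$, Riesz--Thorin produces an interpolated operator norm bound $\le C\,\|f\|_{L^1}^{1-\theta}\,\|f\|_{L^1}^\theta = C\,\|f\|_{L^1}$; taking $g = f$ in the resulting linear inequality closes the argument.

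The main difficulty here is pure bookkeeping: verifying that the arithmetic from Riesz--Thorin matches the claimed exponent formula in the $p \in (1,2]$ range, and that the endpoint constants genuinely combine multiplicatively so that the $\|f\|_{L^1}$ factor survives interpolation with exponent one. A minor technical point is that the Sobolev embedding on the torus is used with no loss of constant; this is standard, and one may alternatively note that the Radon-type argument behind the previous lemma shows that $Q^{R,+}_s(f,f)$ is supported in a ball $B_R$ strictly contained in $\mathcal{D}_L$, so its periodic Sobolev norms coincide with the full-space ones on its support and the embedding reduces to the classical case in $\R^d$.
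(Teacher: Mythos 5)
Your proof is correct and is essentially the argument the paper intends: the paper's own proof is the single sentence ``direct consequence of Sobolev embedding and interpolation between $L^p$ spaces,'' and your write-up supplies exactly the missing details --- the embedding $H^{(d-1)/2}_{per}(\mathcal{D}_L)\hookrightarrow L^{2d}(\mathcal{D}_L)$ applied to the preceding regularity lemma with $r=0$, the $L^1$ and $L^\infty$ endpoints from Lemma~\ref{lmm4.0}, and Riesz--Thorin applied to the frozen-argument linear operator. The exponent arithmetic checks out and is consistent at the endpoints ($q\to 1$ as $p\to 1^{+}$ in the first branch, and $q=2d$ at $p=2$ from both branches).
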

\begin{proof}
  It is a direct consequence of Sobolev embedding and interpolation
  between $L^p$ spaces.
\end{proof}
Now we extend the regularity of $Q^{R,+}$ to general non-smooth
kernels.
\begin{lemma}
  Let $B$ be a collision kernel satisfying
  (\ref{HSkernel})-(\ref{hyp:theta}). Then, for all $p>1$, there exist
  constants $C$, $\kappa$ and $q<p$ ($q$ only depending on $p$ and
  $d$), such that for all $\delta>0$, and for all measurable function
  $f$
  $$
  \big\|Q^{R,+}(f,f)\big\|_{L^p}\ \ \leq\ \ C\,\delta^{-\kappa} \, \|f
  \|_{L^1}\,\|f\|_{L^q} \ +\ \delta \, \|f \|_{L^1}\,\|f\|_{L^p}.
  $$  
\end{lemma}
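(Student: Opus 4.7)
The plan is to use the smooth/non-smooth splitting $Q^{R,+} = Q^{R,+}_s + Q^{R,+}_{ns}$ introduced above, parameterized by the mollification scale $\eta > 0$ to be optimized in terms of $\delta$ at the end. The idea is standard: the smooth piece is controlled in $L^p$ by a \emph{lower} $L^q$ norm thanks to its regularizing property (Corollary~\ref{cor1}), while the non-smooth piece is only controlled by the full $L^p$ norm but with a small constant coming from the smallness of the region of integration.

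For the smooth piece, I would apply Corollary~\ref{cor1} to $\mathcal{B}^R_s$, which satisfies \eqref{smooth:ker} by construction. This yields
$$
\|Q^{R,+}_s(f,f)\|_{L^p} \leq C_{reg}(p,\mathcal{B}^R_s)\, \|f\|_{L^1}\,\|f\|_{L^q}
$$
with $q<p$ depending only on $p$ and $d$. The key point is to quantify how $C_{reg}(p,\mathcal{B}^R_s)$ depends on $\eta$: since the mollified cutoffs $\chi^R_\eta$, $\zeta_\eta$ and $\Theta_\eta$ have derivatives blowing up polynomially in $\eta^{-1}$, an inspection of the regularity proof gives $C_{reg}(p,\mathcal{B}^R_s) \leq C\,\eta^{-\kappa_1}$ for some explicit $\kappa_1>0$.

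For the non-smooth piece, I would apply Lemma~\ref{lmm4.0} to $\mathcal{B}^R_{ns}$:
$$
\|Q^{R,+}_{ns}(f,f)\|_{L^p} \leq C_p(R,\mathcal{B}^R_{ns})\, \|f\|_{L^1}\,\|f\|_{L^p}.
$$
The constant $C_p(R,\mathcal{B}^R_{ns})$ can be dominated by a quantity of the form $\sup_v \int \mathcal{B}^R_{ns}$. Since $\mathcal{B}^R_{ns}$ is supported either where $|g| \le 2\eta$ or where $\cos\theta$ is within $2\eta$ of $\pm 1$, and since $\Phi(z)=z^\gamma$ is locally bounded and $b$ is integrable by \eqref{hyp:theta}, we obtain $C_p(R,\mathcal{B}^R_{ns}) \leq C\,\eta^{a}$ for some $a>0$ (depending on $\gamma$, $d$, and the integrability modulus of $b$ near $\pm 1$).

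Combining both bounds gives
$$
\|Q^{R,+}(f,f)\|_{L^p} \leq C\,\eta^{-\kappa_1}\,\|f\|_{L^1}\,\|f\|_{L^q} + C\,\eta^{a}\,\|f\|_{L^1}\,\|f\|_{L^p},
$$
and choosing $\eta$ so that $C\eta^a = \delta$, i.e.\ $\eta \sim \delta^{1/a}$, yields the claim with $\kappa = \kappa_1/a$. The main obstacle I anticipate is the explicit bookkeeping of the $\eta$-dependence of the smooth-part constant $C_{reg}(p,\mathcal{B}^R_s)$: the existing statement of Corollary~\ref{cor1} does not display it, so one has to revisit the proof of the $T$-operator regularity (the Radon-type estimate) and quantify each Sobolev norm of the mollified kernel in terms of the mollification scale $\eta$.
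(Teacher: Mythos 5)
Your proposal is correct and follows essentially the same route as the paper: the same smooth/non-smooth splitting at mollification scale $\eta$, the same use of Corollary~\ref{cor1} (with the roles of $p$ and $q$ inverted) for the regularized part with a constant blowing up polynomially in $\eta^{-1}$, and the same final optimization $\eta\sim\delta^{1/a}$. The only divergence is in the remainder: the paper decomposes $Q^{R,+}_{ns}$ into four pieces (one for each cutoff, including the mollification near $|g|=R$, which your support description omits but which is handled identically), proves explicit $L^1$ and $L^\infty$ endpoint bounds --- with the smallness $\eta$, resp.\ $\eta^\gamma$ for the small-relative-velocity piece, appearing only at the $L^\infty$ endpoint --- and concludes by Riesz--Thorin interpolation, whereas you invoke the bilinear $L^1\times L^p\to L^p$ bound of Lemma~\ref{lmm4.0} directly with a kernel-size constant; both are legitimate, yours being slightly more direct provided one checks that the constant in Lemma~\ref{lmm4.0} is indeed controlled by an $L^1$-type norm of the kernel.
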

\begin{proof}
  We use a decomposition approach and split the operator $Q^{R,+}$ as
  the sum of a smooth part and a non-smooth part
  $$
  Q^{R,+} = Q^{R,+}_s + Q^{R,+}_{ns},
  $$
  where $Q^{R,+}_s$ is given by (\ref{smooth:Qp}). Then, applying
  Corollary \ref{cor1} we have for all $p\in (1,\infty)$, there exist
  $q<p$, namely (the role of $q$ and $p$ are exchanged here with
  respect to Corollary \ref{cor1})
  $$
  q = \left\{
    \begin{array}{ll}
      \displaystyle{\frac{(2d-1)p}{d+(d-1)p}} & \textrm{ if } p \in(1,2d]
      \\
      \displaystyle{\frac{p}{d} }  & \textrm{ if } p \in[2d,\infty).
    \end{array}\right.
  $$
  and $C_{reg}(\eta,\mathcal{B}_s^R)$, depending on the the
  regularization parameter $\eta$ and blowing-up polynomially when
  $\eta\rightarrow 0$ such that
  \begin{equation}
    \label{egal:1}
    \big\| Q^{R,+}_s(f,f) \big\|_{L^p}\ \ \leq \ \ C_{reg}(\eta,\mathcal{B}^R_s)\ \|f\|_{L^1} \ \|f\|_{L^q}.
  \end{equation}
  Now, we need to estimate the remainder
  $Q^{R,+}_{ns}=Q^{R,+}-Q^{R,+}_s$. To this aim, we split it as
  $$
  Q^{R,+}_{ns}(f,f) =  Q^{R,+}_{1}(f,f) + Q^{R,+}_{2}(f,f) +
  Q^{R,+}_{3}(f,f) + Q^{R,+} _4 (f,f),
  $$
  with
  \begin{eqnarray*}
    Q^{R,+}_1(f,f) &=&  \int_{\R^d\times\R^d}
    \mathcal{B}(|y|,|z|)\,\delta(y \cdot z)\,\chi^R(|z|)\,
    \left[\chi^R(|y|)-\chi^R_\eta(|y|)\right]\, \zeta_\eta \,
    \Theta_\eta \, f^\prime\,f^\prime_\star\,dy\,dz,
    \\
    Q^{R,+}_2(f,f) &=&  \int_{\R^d\times\R^d}
    \mathcal{B}(|y|,|z|)\,\delta(y \cdot z)\,\chi^R_\eta(|y|)\, 
    \left[\chi^R(|z|)-\chi^R_\eta(|z|)\right]\, \zeta_\eta \,
    \Theta_\eta \, f^\prime\,f^\prime_\star\,dy\,dz,\\
   Q^{R,+}_3(f,f) &=&  
    \int_{\R^d\times\R^d} \mathcal{B}(|y|,|z|)\,\delta(y \cdot
    z)\,\chi^R_\eta(|z|)\,\chi_{\eta}^R(|y|) \, \zeta_\eta \, \left[1-\Theta_\eta\left(\frac{|y|}{\sqrt{|y|^2+|z|^2}}\right)\right]\, f^\prime\,f^\prime_\star\,dy\,dz \\
   Q^{R,+}_4(f,f) &=&  
    \int_{\R^d\times\R^d} \mathcal{B}(|y|,|z|)\,\delta(y \cdot
    z)\,\chi^R_\eta(|z|)\,\chi_{\eta}^R(|y|) \, \Theta_\eta \,
    (1-\zeta_\eta) \, f^\prime\,f^\prime_\star\,dy\,dz.
  \end{eqnarray*}
  On the one hand, we give a first estimate in $L^1(\mathcal{D}_L)$
  applying directly the estimate in Lemma \ref{lmm4.0}:
  \begin{equation}
    \label{n1}
    \| Q^{R,+}_{\alpha} \|_{L^1} \ \ \leq \ \  C_1(R,\mathcal{B}) \,\| f\|_{L^1} \| f\|_{L^1},\quad \alpha \in\{1,2,3,4\}.
  \end{equation}
  On the other hand, we treat for instance the operator
  $Q^{R,+}_1(f,f)$ and have for a fixed $v\in\mathcal{D}_L$
  \begin{eqnarray*}
    \lefteqn{\left|Q^{R,+}_1(f,f)(v)\right| }
    \\
    &\leq&  \int_{\R^d\times\R^d} \mathcal{B}(|y|,|z|)\,\delta(y \cdot z)\,\chi^R(|z|)\,\left|\chi^R(|y|)-\chi^R_\eta(|y|)\right|\, |f^\prime|\,|f^\prime_\star|\,dy\,dz
    \\
    &\leq&  \int_{\mathcal{D}_L} f^\prime\, \left(\int_{\R^d} \mathcal{B}(|v-v^\prime_\star|,|v-v^\prime|)\,\delta((v-v^\prime_\star)\cdot (v-v^\prime))\, \right.
    \\
    && \quad \times \left. \left|\chi^R(|v-v^\prime_\star|)-\chi^R_\eta(|v-v^\prime_\star|)\right|\, |f^\prime_\star|\,dv^\prime_\star\,\right) dv^\prime
    \\
    &\leq & \|f\|_{L^1}\, \| f\|_{L^\infty} \,\sup_{y\in \R^d} \left( \int_{z \in  \ y^\perp} \mathcal{B}^R(|z|,|y|) \,\left|\chi^R(|z|)-\chi^R_\eta(|z|)\right|\, dz\right)
    \\
    &\leq & C_\infty(R,B)\,\eta\,   \|f\|_{L^1}\, \| f\|_{L^\infty}.
  \end{eqnarray*}
  Using similar techniques, we prove that for $\alpha\in\{1,2,3\}$
  \begin{equation}
    \label{ninfini}
    \| Q^{R,+}_{\alpha} \|_{L^\infty} \, \, \leq \, \,  C_\infty(R,\mathcal{B}) \,\eta \,\| f\|_{L^1} \| f\|_{L^\infty}.
  \end{equation}
  For the fourth term, we have using the cancellation of the collision
  kernel $B$ at small relative velocities as $|v-v_*|^\gamma$:
  \begin{equation}
    \label{4infini}
    \| Q^{R,+}_{4} \|_{L^\infty} \, \, \leq \, \,  C_\infty(R,\mathcal{B}) \,\eta^\gamma \,\| f\|_{L^1} \| f\|_{L^\infty}.
  \end{equation}
  Finally, by the Riesz-Thorin interpolation Theorem, from
  (\ref{ninfini}-\ref{4infini}) and (\ref{n1}), we deduce that for
  $p\in[1,+\infty]$ there exist $\C_p(R,B)>0$ and $\beta \in (0,1]$
  such that
  \begin{equation}
    \label{egal:2}
    \| Q^{R,+}_\alpha(f,f) \|_{L^{p}}\,\, \leq\,\, \C_p(R,B) \,\eta^\beta \| f \|_{L^1} \, \| f \|_{L^p}.
  \end{equation}
  To sum up we have obtained for all $p>1$ and $\eta>0$, there exist
  $C>0$, $q<p$, $\kappa_0>0$ and $\beta\in (0,1)$ such that
  $$
  \|Q^{R,+}(f,f)\|_{L^p}\, \, \leq\, \, C \, \eta^{-\kappa_0} \, \|f
  \|_{L^1}\,\|f\|_{L^q} \, \, +\, \, \eta^\beta \, \|f
  \|_{L^1}\,\|f\|_{L^p}.
  $$
  The conclusion follows by choosing $\eta$ small enough.
 \end{proof}

\begin{proof}[Proof of Proposition \ref{prop:reg}]
  Now the proof of the propagation of regularity bounds is done
  exactly as in \cite[Section 4 and Subsections 5.1 \& 5.2]{MV:04}
  (except for the simplification that there is no moments estimates to
  take care of).
\end{proof}

\subsection{Entropy -- entropy production inequalities}

The periodized equation (\ref{unperturb}) preserves non-negativity,
and for a non-negative distribution $f$ one can formally compute an
$H$ theorem (see \cite{FMP}):
$$
\frac{d}{dt} H(f(t)) = - D(f(t)) \le 0 
$$
with 
$$
H(f) = \int_{\cal{D}_L} f \, \log f \, dv 
$$
and 
$$
D(f) = - \int_{\cal{D}_L} Q^R (f,f) \, \log f \, dv = \frac14 \,
\int_{\cal{D}_L \times \cal{C}_R} (f^\prime f^\prime_\star - f
f_\star) \, \log \left(\frac{f^\prime f^\prime_\star}{f
    f_\star}\right) \, \cal{B}(y,z) \, dv \, dy \, dz.
$$

Then we can state the result which relates the entropy functional $H$
and the entropy production functional $D$:
\begin{proposition}
\label{prop:eep}
We consider the periodized Boltzmann collision operator for some
truncation parameter $R> \sqrt 2 \, L$, and we assume that the
collision kernel satisfies $B \ge b_0 \, |v-v_\star|^\gamma$, $\gamma
>0$, for $|v-v_\star| \le R$. Then for any $\eta, \alpha >0$ there is
$k \in \N$ and $M, K >0$ (depending only on $\eta, \alpha,b_0, \gamma,
R$) such that
$$
D(f) \ge K \, H(f | m_\infty)^{1+\eta}, \qquad m_\infty =
\frac{\rho}{|\cal{D}_L|}, \quad \rho= \int_{\cal{D}_L} f \, dv,
$$
for any $\alpha \le f \in L^1(\cal{D}_L)$ with
$H^k_{per}(\mathcal{D}_L)$ norm bounded by $M$.
\end{proposition}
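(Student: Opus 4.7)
The approach is to adapt Villani's quantitative entropy production argument from \cite{Vill:cerc} to the periodized Boltzmann operator. The key structural simplification in the present setting is that the assumption $R > \sqrt{2}\, L$ guarantees that every pair of velocities in the torus participates in a collision, so the only collisional invariants are the constants (the quadratic invariants giving rise to Maxwellians do not survive periodization). Consequently the only equilibria of mass $\rho$ on $\mathcal{D}_L$ are the constants $m_\infty = \rho/|\mathcal{D}_L|$, which explains why these appear as the reference state in the inequality to be proved.

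First I would invoke the pointwise bound $(a-b)\log(a/b) \ge 4(\sqrt{a}-\sqrt{b})^2$ and the kernel lower bound $B \ge b_0 |v-v_\star|^\gamma$ to reduce to an estimate on the ``square-root'' dissipation
$$D(f) \ge 4 b_0 \int_{\mathcal{D}_L \times \mathcal{C}_R} |v-v_\star|^\gamma \bigl(\sqrt{f'f'_\star} - \sqrt{ff_\star}\bigr)^2 \, dv\, dy\, dz.$$
Since $\alpha \le f$ by hypothesis, and $f \le \|f\|_{L^\infty} \le C(M)$ thanks to the Sobolev embedding $H^k_{per}\hookrightarrow L^\infty$ for $k>d/2$, the function $h := \log(f/m_\infty)$ is uniformly bounded, so all logarithms and quotients appearing in the analysis are non-degenerate.

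The next step, in the spirit of \cite{Vill:cerc}, is to show that the right-hand side above controls a Fisher-type functional $\int f\,|\nabla \log f|^2 \, dv$ up to a remainder $R(f)$ depending on higher derivatives of $f$. Combining this with the logarithmic Sobolev inequality on the torus,
$$H(f|m_\infty) \le C_{LS} \int \frac{|\nabla f|^2}{f}\, dv,$$
yields a linear Cercignani-type inequality $D(f) \ge c\, H(f|m_\infty) - R(f)$, and interpolation between $H^k_{per}$ and $L^2$ allows one to absorb $R(f)$ at the cost of a small power, producing $D(f) \ge K\, H(f|m_\infty)^{1+\eta}$ where $\eta$ can be made arbitrarily small by choosing $k$ large enough (and $M, K$ depend on $\alpha, R, b_0, \gamma$ through the log-Sobolev constant and the interpolation constants).

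The main obstacle is the passage from the square-root dissipation to the Fisher-information estimate: Villani's original argument proceeds via a ``blowing-up'' of small post-collisional velocity differences together with an integration by parts on $\ens{S}^{d-1}$, controlled by moment bounds. In our periodized setting moments are trivial thanks to compact support, which is a simplification, but one must carefully verify that the collision kernel restricted to $|v-v_\star|\le R$ with the kinematics of either (\ref{eq:QRclass}) or (\ref{eq:QRfast}) provides enough directional coverage to produce a coercive gradient quadratic form. This is again where the assumption $R>\sqrt 2\, L$ plays its role, ensuring that no gradient direction is ``blind'' to the collision process. A less efficient but conceptually simple fallback is a compactness/contradiction argument using the $H^k$ bound: if the inequality failed, a sequence $(f_n)$ with $D(f_n)/H(f_n|m_\infty)^{1+\eta}\to 0$ would admit a subsequential limit $f_\infty$ with $D(f_\infty)=0$ and $H(f_\infty|m_\infty)>0$, contradicting the characterization of equilibria; this yields the inequality but without explicit constants.
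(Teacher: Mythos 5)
Your proposal correctly identifies the high-level framework (adapting \cite{Vill:cerc}, with the constants as the only equilibria because periodization destroys the energy invariant), but the step you yourself flag as ``the main obstacle'' --- passing from the collisional dissipation to the relative entropy --- is precisely the content of the proof, and neither of the two routes you sketch for it closes the gap. The paper's argument (Lemma~\ref{lem:cerc}) does not go through the bound $(a-b)\log(a/b)\ge 4(\sqrt a-\sqrt b)^2$, a log-Sobolev inequality, or an integration by parts on $\ens{S}^{d-1}$. Instead: after Jensen's inequality in $\sigma$ one is reduced to $\bar D(f)=\int (F-G)\log(F/G)$ with $F=ff_\star$ and $G$ the spherical average of $f'f'_\star$; the key structural observation is that on the torus $G$ depends \emph{only} on $v+v_\star$ (the $|v|^2+|v_\star|^2$ dependence is killed by periodicity). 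One then regularizes by the heat semigroup $S_t$, computes $-\frac{d}{dt}\bar D(S_tf)$ as a weighted Fisher-type quadratic form, applies the difference operator $\nabla_v-\nabla_{v_\star}$ (which annihilates $\nabla G$ because $G$ depends only on $v+v_\star$), uses Jensen in $v_\star$ to recover the relative Fisher information $I(S_tf\,|\,m_\infty)$, and integrates in $t$ using $\frac{d}{dt}H(S_tf\,|\,m_\infty)=-I(S_tf\,|\,m_\infty)$ to conclude $\bar D(f)\ge C\,H(f\,|\,m_\infty)$. You never use the crucial fact about $G$, and without it there is no identifiable mechanism producing the entropy on the right-hand side. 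Moreover, your explanation of the exponent $1+\eta$ (interpolation absorbing a Fisher-information remainder, with $\eta\to0$ as $k\to\infty$) is not the correct one: in the paper the loss comes solely from the degeneracy of $B$ at zero relative velocity, via the splitting $B\ge\delta^\gamma\big(B_0-{\bf 1}_{|v-v_\star|\le\delta}\big)$, the bound $\tilde D_\delta\le C\,H(f\,|\,m_\infty)^{1-\var}\,\delta^{d/4}$ of Lemma~\ref{lem:diag} (which is where $\alpha\le f$ and the $H^k$ bound are used), and an optimization in $\delta$.

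The compactness fallback also has a genuine hole. If $D(f_n)/H(f_n\,|\,m_\infty)^{1+\eta}\to 0$ with $H(f_n\,|\,m_\infty)\to 0$, the limit point is the equilibrium itself and you get $0/0$ with no contradiction; ruling this out requires a quantitative lower bound on $D$ in a neighborhood of $m_\infty$ (a linearized/spectral-gap analysis together with control of the quadratic remainder), which is exactly the kind of estimate the compactness argument was meant to avoid. So as written the proposal establishes neither the functional inequality away from equilibrium (missing the semigroup argument) nor near equilibrium (missing the linearized analysis), and I would regard it as an outline of intent rather than a proof.
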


\Remarks 

1. Note that this is a functional inequality independent of the flow
of the Boltzmann equation itself.  \smallskip

2. In the case of the classical Boltzmann equation with $v \in \R^d$
the entropy production functional controls the relative entropy
according to the Maxwellian equilibrium. Here the equilibrium is a
constant, defined by the mass of $f$ divided by the total volume of
the torus.

\medskip

We shall adapt the method developed in \cite[Proof of
Theorem~2.1]{Vill:cerc}.  In the first step, we treat the case of a
collision kernel $B$ which is uniformly bounded from below. In this
case we prove the equivalent of the so-called Cercignani conjecture in
the context of the Boltzmann operator periodized in the velocity
space.

\Remark Note that the assumption $R \ge \sqrt 2 \, L$ allows to
replace in the bound from below, when needed, the truncation by the
integration over the whole torus.

\medskip

\begin{lemma}\label{lem:cerc} 
  Let us consider a collision kernel $B$ which satisfies $B \ge b_0
  >0$, a truncation $Q^R$ defined by (\ref{eq:QRclass}) or
  (\ref{eq:QRfast}), together with $R \ge \sqrt 2 \, L$. Then there is
  an explicit constant $K$ such that for any $0 \le f \in
  L^1(\cal{D}_L)$ we have
  $$
  D(f) \ge K \, H(f | m_\infty).
  $$
\end{lemma}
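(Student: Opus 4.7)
We adapt the variational method of Villani~\cite{Vill:cerc} for Cercignani's conjecture under a uniformly positive collision kernel. Two features of the present periodic setting simplify the analysis considerably: the equilibrium $m_\infty$ is a constant (so no moment bounds on Maxwellian tails enter the argument), and the assumption $R\ge\sqrt{2}\,L$ ensures that the collision integration on $\mathcal{C}_R$ covers the whole torus, so that the pre/post-collision change of variables $(v,v_\star)\leftrightarrow(v',v'_\star)$ acts as a measure-preserving involution on $\mathcal{D}_L\times\mathcal{D}_L$.

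The first step is a Fisher-type lower bound: combining the elementary pointwise inequality $(a-b)\log(a/b)\ge 4(\sqrt{a}-\sqrt{b})^2$ with the hypothesis $B\ge b_0>0$ yields
\[
D(f) \;\ge\; b_0 \int_{\mathcal{D}_L\times\mathcal{C}_R} \bigl(\sqrt{ff_\star}-\sqrt{f'f'_\star}\bigr)^2 \, \mathcal{B}(y,z)\,dv\,dy\,dz.
\]
(In the fast truncation the factor $\delta(y\cdot z)$ is absorbed by restriction to the constraint surface.) The right-hand side is a symmetric Dirichlet-form-type quantity acting on the two-point function $\sqrt{ff_\star}$ on $\mathcal{D}_L\times\mathcal{D}_L$, for which the constant state $m_\infty\otimes m_\infty$ is the unique stationary point of the associated pairwise collision transformation.

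The second step, which is the core of the argument, is to show that this Dirichlet form controls the relative entropy of the pair density: namely, that
\[
\int \bigl(\sqrt{ff_\star}-\sqrt{f'f'_\star}\bigr)^2\,\mathcal{B}\,dv\,dy\,dz \;\ge\; K_0\, H\bigl(f\otimes f \,\big|\, m_\infty\otimes m_\infty\bigr)
\]
for an explicit $K_0=K_0(b_0,R,L,\rho)$ with $\rho=\int f$. This is the torus analogue of Cercignani's conjecture, obtained by transplanting Villani's entropy--entropy-production variational argument. The compactness of $\mathcal{D}_L$, the uniform positivity of the equilibrium $m_\infty>0$, and the surjectivity onto $\mathcal{D}_L\times\mathcal{D}_L$ of the collision transformation (which follows from $R\ge\sqrt 2\, L$) allow the argument to go through with no moment, regularity or tail assumption on $f$. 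One then concludes by the tensorization identity $H(f\otimes f\,|\,m_\infty\otimes m_\infty)=2\rho\,H(f\,|\,m_\infty)$, which gives $D(f)\ge K\,H(f\,|\,m_\infty)$ with $K=2\,b_0\,\rho\, K_0$.

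The main obstacle is unquestionably the second step. Upgrading the natural spectral-gap ($L^2$) control provided by a Dirichlet form into an entropic, log-Sobolev-type control is exactly the content of Cercignani's conjecture, which is known to fail in general in the classical Maxwellian setting on $\mathbb{R}^d$. Here the torus geometry, the strict positivity of the equilibrium, and the absence of unbounded tails allow Villani's argument to apply in a stripped-down form, but tracking the explicit dependence of the constant $K_0$ on $b_0$, $R$, $L$ and $\rho$ still requires some care.
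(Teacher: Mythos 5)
There is a genuine gap, and it is located exactly where you flag ``the main obstacle'': your Step~2 is not merely hard, it is false as stated, and the reason is your Step~1. The reduction $(a-b)\log(a/b)\ge 4(\sqrt a-\sqrt b)^2$ is lossy in a way that destroys the conclusion. Indeed, with $\mathcal{B}$ bounded on the truncated domain and the pre/post-collisional change of variables, one has
\[
\int_{\mathcal{D}_L\times\mathcal{C}_R}\bigl(\sqrt{ff_\star}-\sqrt{f'f'_\star}\bigr)^2\,\mathcal{B}\,dv\,dy\,dz
\;\le\; 2\int \bigl(ff_\star+f'f'_\star\bigr)\,\mathcal{B}
\;\le\; C(R,L)\,\rho^2 ,
\]
a bound depending only on the mass, whereas $H(f\,|\,m_\infty)$ (hence $H(f\otimes f\,|\,m_\infty\otimes m_\infty)=2\rho\,H(f\,|\,m_\infty)$) is unbounded over densities of fixed mass $\rho$ (take $f=\rho\,\mathbf{1}_A/|A|$ with $|A|\to 0$). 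So no constant $K_0(b_0,R,L,\rho)$ can make your Step~2 inequality true. The logarithmic growth of the entropy for concentrating $f$ is matched only by the logarithmic factor in $(X-Y)\log(X/Y)$, which you discard at the outset. This is the same obstruction that makes spectral-gap (Dirichlet form) estimates strictly weaker than entropy-production estimates; it cannot be repaired by the torus geometry or by $R\ge\sqrt2\,L$.

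The paper's proof keeps the full convex integrand $(X,Y)\mapsto(X-Y)(\log X-\log Y)$ throughout. It first applies Jensen's inequality in the angular variable to replace $f'f'_\star$ by its spherical average $G$, and observes that on the torus $G$ depends only on $v+v_\star$ (Boltzmann's classical observation plus periodicity kills the dependence on $|v|^2+|v_\star|^2$). It then runs Villani's heat-semigroup interpolation: the commutation of $S_t$ with the tensor structure and with the symmetry of $G$ yields
\[
\bar D(f)\;\ge\;\int_0^{+\infty}\!\!\int \Bigl|\tfrac{\nabla S_tF}{S_tF}-\tfrac{\nabla S_tG}{S_tG}\Bigr|^2(S_tF+S_tG)\,dv\,dv_\star\,dt,
\]
the difference operator $P(A,B)=A-B$ annihilates the $G$-term, a Jensen inequality in $v_\star$ produces the relative Fisher information $I(S_tf\,|\,m_\infty)$, and integrating $\frac{d}{dt}H(S_tf\,|\,m_\infty)=-I(S_tf\,|\,m_\infty)$ over $t\in(0,\infty)$ recovers exactly $H(f\,|\,m_\infty)$. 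If you want to salvage your write-up, you must drop the square-root reduction and carry out this entropy--Fisher interpolation (or an equivalent argument that retains the logarithm); the remaining ingredients you list (constancy of $m_\infty$, surjectivity from $R\ge\sqrt2\,L$, tensorization) are correct but are not where the difficulty lies.
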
 
\begin{proof}
  We proceed in several steps.  \smallskip

  \noindent {\bf Step 1.} Since the entropy production functional is
  monotonous in terms of the collision kernel $\cal{B}$, it is no
  restriction to replace $\cal{B}$ by $1$ in the sequel for the
  estimate from below. Moreover it is always possible to bound from
  below the truncation $|v-v^\prime| \le R$ and $|v-v^\prime_\star|
  \le R$ (in case we performed the truncation for the fast spectral
  method) by the classical truncation $|v-v_\star| \le R$.  \smallskip

  \noindent {\bf Step 2.} Using Jensen's inequality on the sphere
  integration (coming back to the classical truncation by the previous
  remark) and the joint convexity of the function $(X,Y) \mapsto (X-Y)
  \, (\log X - \log Y)$ on $\R_+ \times \R_+$ we compute
  $$
  D(f) \ge \C \, \int_{v \in \cal{D}_L, \ v_\star \in B(v,R)}(F-G) \, \log \frac{F}{G} \, dv \, dv_\star =: \bar D(f)
  $$
  where $F = f\, f_\star$ and 
  $$
  G = \frac{1}{|\ens{S}^{d-1}|}\, \int_{\ens{S}^{d-1}} f^\prime \, f^\prime_\star \, d\sigma. 
  $$ 
  Let us study more precisely the function $G$. As it was already
  observed by Boltzmann himself, the function $G$ only depends on
  $v+v_\star$ and $(|v|^2 + |v_\star|^2)/2$. Moreover, here it is also
  periodic on the torus $\cal{D}_L$ since $f$ is periodic. It implies
  (when $f$ is smooth, but we can always use mollifications to relax
  this assumption here) that it in fact only depends on $v+v_\star$.
  \smallskip

  \noindent {\bf Step 3.} Let us denote by $S_t$ the semigroup of the
  heat equation on $L^1(\cal{D}_L)$ (and for brevity we keep the same
  notation for its semigroup in $L^1(\cal{D}_L ^2)$). Then the
  semigroup is compatible with the symmetries in the sense that:
  $$
  S_t ( f \, f_\star) \,=\, (S_t f) \,\, (S_t f_\star)
  $$
  and $S_t G$ only depends on $v+v_\star$ (this follows from a
  straightforward computation using the explicit formula for the Green
  kernel of $S_t$).  \smallskip

  \noindent
  {\bf Step 4.} Then we have the following computation as in \cite{Vill:cerc}: 
  $$
  \frac{d}{dt} \Bigg|_{t=0}\Bigg[ S_t \left( (F -G) \, \log \frac{F}{G} \right) - (S_t F - S_t G) \, \log \frac{S_t F}{S_t G} \Bigg] 
  = \left| \frac{\nabla F}F - \frac{\nabla G}G \right|^2 \, (F+ G),
  $$
  where $\nabla$ denotes the gradient with respect to $(v,v_\star)\in
  \R^{2d}$.

  Then we bound from below the truncation ${\bf 1}_{B(v,R)}(v_\star)$
  by the integration over the whole torus for $v_\star$ (since $R$ is
  large enough), and we compute
  \begin{eqnarray*}
    - \frac{d}{dt} \Bigg|_{t=0} \bar D (S_t f) &=& 
    \int_{\cal{D}_L\times\cal{D}_L}\left| \frac{\nabla F}F - \frac{\nabla G}G \right|^2 \, (F+ G) \, dv \, dv_\star
    \\
    && - \int_{\cal{D}_L\times \cal{D}_L}\Delta \left( (F -G) \, \log \frac{F}{G} \right) \, dv \, dv_\star 
    \\
    &=& \int_{\cal{D}_L\times\cal{D}_L}\left| \frac{\nabla F}F - \frac{\nabla G}G \right|^2 \, (F+ G) \, dv \, dv_\star.
  \end{eqnarray*}
  We deduce by the semigroup property that for all $t>0$
  \begin{equation*}
    - \frac{d}{dt} \bar D (S_t f)  \ge 
    \int_{\cal{D}_L\times\cal{D}_L}\left| \frac{\nabla S_t F}{S_t F} 
      - \frac{\nabla S_t G}{S_t G} \right|^2 \, (S_t F+ S_t G) \, dv \, dv_\star
  \end{equation*}
  and therefore
  $$
  \bar D(f) \ge \int_0 ^{+\infty} \left( \int_{\cal{D}_L\times\cal{D}_L}\left| \frac{\nabla S_t F}{S_t F} 
      - \frac{\nabla S_t G}{S_t G} \right|^2 \, (S_t F+ S_t G) \, dv \, dv_\star \right) \, dt.
  $$
  \smallskip
  
  \noindent {\bf Step 5.} We now use the fact that the operator
$$
P : 
\left\{
\begin{array}{lll}
\R^d\times \R^d &\mapsto& \R^d
\vspace{0.2cm} \\
(A,B) &\mapsto& (A-B)
\end{array}
\right. 
$$
is bounded from $\cal{D}_L \times \cal{D}_L$ to $\cal{D}_L$. Hence
$$
\left| \frac{\nabla S_t F}{S_t F} - \frac{\nabla S_t G}{S_t G} \right|^2 \ge C_1 \, \left| \frac{P \nabla S_t F}{S_t F} \right|^2
$$
since $P \nabla S_t G =\nabla_v G - \nabla_{v_\star} G = 0$ from the
fact that $G$ only depends on $v+v_\star$.  We deduce
$$
\bar D(f) \ge C\,\int_0 ^{+\infty} \left( \int_{\cal{D}_L\times
    \cal{D}_L}\left| \frac{\nabla_v S_t f}{S_t f} - \left(
      \frac{\nabla_{v_\star} S_t f}{S_t f} \right)_\star \right|^2 \,
  (S_t f \, S_t f_\star + S_t G) \, dv \, dv_\star \right) \, dt
$$
and thus (dropping the term $S_t G$)
$$
\bar D(f) \ge C\,\int_0 ^{+\infty} \left( \int_{\cal{D}_L\times
    \cal{D}_L}\left| \frac{\nabla_v S_t f}{S_t f} - \left(
      \frac{\nabla_{v_\star} S_t f}{S_t f} \right)_\star \right|^2 \,
  S_t f \, S_t f_\star \, dv \, dv_\star \right) \, dt.
$$
\smallskip

\noindent {\bf Step 6.} From now on the proof departs slightly more
from \cite{Vill:cerc}: it is simpler since we are in the torus and we
have more symmetries.  Let us show the following functional
inequality: for any smooth non-negative function $h$,
$$
\int_{\cal{D}_L\times\cal{D}_L}\left| \frac{\nabla_v h}{h} - \left(
    \frac{\nabla_{v_\star} h}{h} \right)_\star \right|^2 \, h \,
h_\star \, dv \, dv_\star\ge C_2 \, I(h | m_\infty)
$$
where 
$$
I(h | g) = \int_{\cal{D}_L} h \, \left|\nabla_v \log \frac{h}{g} \right|^2 \, dv.
$$
The proof only amounts to Jensen's inequality on the variable
$v_\star$: since $\int_{\cal{D}_L} h\,dv=\rho$,
$$
\int_{\cal{D}_L\times\cal{D}_L}\left| \frac{\nabla_v h}{h} 
- \left( \frac{\nabla_{v_\star} h}{h} \right)_\star \right|^2 \, h\, h_\star \, dv \, dv_\star \ge \frac{C}{\rho} \,  
\int_{\cal{D}_L}\left| \int_{\in \cal{D}_L} \left( \frac{\nabla_v h}{h} 
- \left( \frac{\nabla_{v_\star} h}{h} \right)_\star \right) \, h_\star \, dv_\star \right|^2 \, h \, dv.
$$
Then as 
$$
\int_{\cal{D}_L}\nabla h_\star \, dv_\star \,=\,0
$$
we deduce 
$$
\int_{\cal{D}_L\times\cal{D}_L}\left| \frac{\nabla_v h}{h} 
- \left( \frac{\nabla_{v_\star} h}{h} \right)_\star \right|^2 \, h \, h_\star \ge C  \,  \int_{\cal{D}_L}\left| \frac{\nabla_v h}{h} \right|^2 \, h \, dv \,=\, \C \, I(h|m_\infty).
$$
\smallskip

\noindent 
{\bf Step 7.}  So far we have proved 
$$
D(f) \ge \bar D(f) \ge C_3 \, \int_0 ^{+\infty} I(S_t f | m_\infty) \, dt.
$$
Then a trivial computation shows that 
$$
\frac{d}{dt} H(S_t f | m_\infty) = - I(S_t f | m_\infty).
$$
Moreover from the explicit formula for $S_t$ we have 
$$
H(S_t f | m_\infty) \xrightarrow[]{t \to +\infty} 0
$$
and thus we finally obtain 
$$
D(f) \ge \bar D(f) \ge C_3 \, \left( H(S_0 f | m_\infty) - 0 \right) \ge C_3 \, H( f | m_\infty).
$$
\end{proof}

Now we are ready to prove Proposition~\ref{prop:eep}.  
Since we deal with a bounded velocity domain we do not care about
possible decay of the collision kernel at large relative velocity (as
for soft potentials) and the only cancellation we have to treat is for
zero relative velocities. 

\begin{proof}[Proof of Proposition~\ref{prop:eep}]
We only mention the difference as compared to the previous proof. 

The reduction to a collision kernel uniformly bounded from below
studied in Lemma \ref{lem:cerc} is done as in
\cite{Vill:cerc}[Theorem~4.1]: one write for some small $\delta >0$
$$
B(|v-v_\star|) \,\,\ge\,\, \delta^\gamma \, \left( B_0 - {\bf 1}_{B(0,\delta)}(|v-v_\star|) \right) 
$$
where $B_0 \,\,\ge\,\, b_0 \,>\,0$ and we deduce 
$$
D(f) \,\,\ge\,\, \delta^\gamma \, \left( D_0(f) \,-\, \tilde D_\delta(f) \right) 
$$
where $D_0$ is the entropy production functional corresponding to $B_0$, and 
$$
\tilde D_\delta \,=\, \frac{1}{4} \, \int_{\cal{D}_L\times\cal{D}_L}\int_{\ens{S}^{d-1}}
(f^\prime f^\prime _\star - f f_\star) \, \log \frac{f^\prime f^\prime _\star}{f f_\star} \, 
{\bf 1}_{B(0,\delta)}(|v-v_\star|)  \,d\sigma\,dv\,dv_\star. 
$$

Then we have the following Lemma, which is proved exactly similarly as
\cite[Theorem~4.2]{Vill:cerc}. It is even simpler since Maxwellians
are replaced by constant functions and the study of the tail is not
needed (we omit the proof for brevity).
\begin{lemma} \label{lem:diag}
For any $\var \in (0,1)$ and $\alpha >0$, there are constants of smoothness $k,M$ and some 
corresponding constant $C_{diag} >0$ such that 
$$
\tilde D_\delta \le C_{diag} \, H(f | m_\infty)^{1-\var} \, \delta^{d/4}
$$
for any $\alpha \le f \in L^1(\mathcal{D}_L)$ with $H^k_{per}(\mathcal{D}_L)$ norm bounded by $M$. 
\end{lemma}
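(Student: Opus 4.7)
The plan is to adapt Villani's proof of \cite[Theorem 4.2]{Vill:cerc} to the periodized setting, which is actually simpler than the whole-space case because the equilibrium $m_\infty$ is a constant and no decay in $v$ needs to be controlled. First I would reduce the entropy--production integrand to an $L^2$-type expression: since $k > d/2$, Sobolev embedding gives $\|f\|_{L^\infty} \le C\,M$, so $f f_\star$ and $f^\prime f^\prime_\star$ both lie in the compact interval $[\alpha^2, C^2 M^2]$, on which the elementary inequality $(X-Y)\log(X/Y) \le C_\alpha (X-Y)^2$ holds. This gives pointwise control of the integrand and hence
\[
\tilde D_\delta \,\le\, C_{\alpha,M}\int\int_{|v-v_\star|\le \delta}\int_{\ens{S}^{d-1}} (f^\prime f^\prime_\star - f f_\star)^2 \, d\sigma\, dv\, dv_\star.
\]

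Next I would exploit the smallness of the diagonal region. The geometric fact that $|v-v_\star|\le \delta$ forces $|v^\prime - v|, |v^\prime_\star - v_\star| \le \delta$ uniformly in $\sigma$, which, combined with the factorization $f^\prime f^\prime_\star - f f_\star = f^\prime(f^\prime_\star - f_\star) + f_\star(f^\prime - f)$ and the H\"older continuity of $f$ (via Sobolev embedding $H^k_{per}\hookrightarrow C^{0,\beta}$ valid for $k > d/2 + \beta$), yields
\[
\big|f^\prime f^\prime_\star - f f_\star\big| \,\le\, C\, M\, [f]_{C^{0,\beta}}\, \delta^{\beta}.
\]
Squaring and integrating over the region $\{|v-v_\star|\le \delta\}$, whose measure is of order $\delta^d$, produces the preliminary bound $\tilde D_\delta \le C_{\alpha,M}\, [f]_{C^{0,\beta}}^2 \,\delta^{d+2\beta}$.

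To bring in the factor $H(f|m_\infty)^{1-\var}$, I would write $g = f - m_\infty$ (so $[g]_{C^{0,\beta}} = [f]_{C^{0,\beta}}$) and interpolate via Gagliardo--Nirenberg: $\|g\|_{C^{0,\beta}} \le C\, \|g\|_{L^2}^{1-\theta}\, \|g\|_{H^{k_1}_{per}}^{\theta}$, where $\theta \sim (d/2+\beta)/k_1$ can be made arbitrarily small by choosing $k_1$ large (with $k \ge k_1$). The $L^2$ norm of $g$ is controlled by the relative entropy through the pointwise inequality $x\log(x/a) - x + a \ge (x-a)^2 /(2\max(x,a))$ applied to $a=m_\infty$ and $x = f$: combined with $f \le C M$, this gives $\|g\|_{L^2}^2 \le 2 C M\, H(f|m_\infty)$. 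Substituting yields
\[
\tilde D_\delta \,\le\, C_{\alpha,M,k}\, H(f|m_\infty)^{1-\theta}\, \delta^{d+2\beta},
\]
and since $d + 2\beta \ge d/4$, we have $\delta^{d+2\beta} \le \delta^{d/4}$ for $\delta \in (0,1]$; choosing $\theta = \var$ completes the proof.

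The main obstacle is the last interpolation step: pushing the exponent on $H(f|m_\infty)$ arbitrarily close to $1$ forces the Sobolev index $k_1$ (and hence $k$) to grow like $1/\var$, so the constants $k$ and $M$ in the statement necessarily depend on $\var$, exactly as the statement allows. The periodized setting eliminates the tail-behavior-in-velocity estimates that constitute the main complication in Villani's original whole-space proof.
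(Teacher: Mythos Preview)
Your approach is correct and matches exactly what the paper indicates: the paper omits the proof entirely, noting only that it follows \cite[Theorem~4.2]{Vill:cerc} with the simplification that Maxwellians are replaced by constants and no tail analysis is needed. Your adaptation carries this out faithfully, and the stronger $\delta$-exponent $d+2\beta$ you obtain is consistent with (and implies) the claimed $\delta^{d/4}$ for $\delta \in (0,1]$.
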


But since we have
$$
D_0(f) \ge \C \, H(f|m_\infty) 
$$
from Lemma \ref{lem:cerc}, it is straightforward to get the result by
choosing correctly the parameter $\delta$.

\end{proof}

Now we can proceed to the proof of Theorem~\ref{th:existence}.
\subsection{Proof of the global in time stability}

In this subsection we shall turn to the question of obtaining uniform
bounds as well as global existence, in order to conclude the proof of
Theorem~\ref{th:existence}.  Indeed in Section~4 the smallness
assumption on the truncation parameter $\var$ {\it a priori} depends
on $T$ and could go to $0$ as $T$ goes to infinity, since it depends
on regularity bounds growing exponentially in times.

In order to overcome this difficulty, we shall combine the following
arguments:
\begin{itemize}
\item for the unperturbed problem \eqref{unperturb} we have a Liapunov
  structure and the solution converges to a unique prescribed
  equilibrium from the regularity and entropy production studies;
\item the equilibrium distribution of the unperturbed problem
  \eqref{unperturb} (that is the constant functions on the torus) are
  also equilibrium distribution of the perturbed problem
  \eqref{perturb};
\item by taking the size of the perturbation small enough (measured in
  terms of $\var$) it is possible to construct a solution to the
  perturbed problem on an arbitrarily large time interval $[0,T]$, on
  which moreover the perturbed solution remains close to the
  unperturbed solution \eqref{unperturb}, say in Sobolev norms;
\item finally the constant equilibrium functions are non-linearly
  stable for the perturbed problem, with a stability domain
  independent on the size of the perturbation.
\end{itemize}

Hence we shall deduce that as soon as the time for which the perturbed
solution departs from the unperturbed solution is larger that the
time-scale of relaxation to equilibrium for the unperturbed problem
\eqref{unperturb}, the perturbed solution shall be trapped by the
stability domain of the equilibrium before instability due to the
perturbation can develop. Let us formalize these arguments in a proof:

\begin{proof}[Proof of the global stability and asymptotic behavior in Theorem~\ref{th:existence}] 
Let us consider some initial datum $0 \le f_0 \in L^1$ which belongs
to $H^k_{per}(\mathcal{D}_L)$, and some smooth balanced perturbations
$f_{0,\var}$ of it. These perturbations have the same mass, and
therefore are corresponding to the same equilibrium (this is the
reason for this assumption). 

Since the smooth balanced perturbation preserves the constant
equilibrium of (\ref{unperturb}) and is nonlinearly locally stable in
any $H^k_{per}(\cal{D}_L)$, we fix a $\eta >0$ such that the constant
distribution $m_\infty \,=\,\rho/|{\cal D}_L|$ associated to the mass
$\rho$ of $f_0$ on the torus has attraction domain with size $\eta$ in
$H^k_{per}(\mathcal{D}_L)$ for the perturbed problem (\ref{perturb}).

On the one hand, we show that there exists a unique solution $f(t)$ to
(\ref{unperturb}) and from Propositon \ref{prop:reg} we obtain uniform
regularity bounds for all $t\geq 0$
$$
 \| f(t) \|_{H^k_{per}} \le \C.
$$ 
Moreover, from Proposition \ref{prop:eep} there exists a time $T_0$
such that the solution $f(t)$ is $\eta/2$-close to the equilibrium in
$H^p_{per}(\mathcal{D}_L)$ ($p<k$) for $t\geq T_0$ (using the
Csisz\'ar-Kullback inequality in the torus, see
\cite[Theorem~1]{Bolley-Villani} for instance):
$$
\| f(t) - m_\infty \|_{H^p _{per}} \,\,\leq \,\, \eta/2.
$$ 

On the other hand, applying Propostion~\ref{lem:bdedtime} with
$T=T_0$, we prove that there exists $\hat \var$, which only depends on
the $H^k_{per}(\mathcal{D}_L)$ and $L^1(\mathcal{D}_L)$ norms of $f_0$
such that for all $\var$ such that $0<\var< \hat\var$, there exists a
unique smooth solution $f_\var(t) \in H_{per}^k(\mathcal{D}_L)$ to
(\ref{perturb}) on $[0,T_0]$ such that
$$
\forall \, t \in [0,T_0], \quad \| f_\var(t) \|_{H^k_{per}} \,\, \leq\,\, \C(T_0),
$$
and (for any $p<k$) 
$$ 
\| f(t) - f_{\var}(t) \|_{H^p_{per}} \,\,\leq \,\, \bar \varphi_{T_0}(\var),
$$
where $f(t)$ is solution to (\ref{unperturb}) and $\bar
\varphi_{T_0}(\var)$ goes to zero when $\var$ goes to zero.

Then, we fix a perturbation parameter $\hat \var$ small enough such
that for $\var \in (0, \hat \var)$ the perturbed solution $f_\var$
satisfies
$$
\| f(t) - f_{\var}(t) \|_{H^p_{per}} \,\,\leq \,\, \eta/2.
$$
Finally, at time $T_0$ the perturbed solution $f_\var$ belongs to the
stability domain of the constant distribution for the perturbed
problem and it is trapped. 

Therefore there exists a unique global smooth solution $f_\var$, which
is uniformly bounded for all $t\geq 0$ and such that for any $p<k$
$$
\| f_\var(t) \|_{H^p_{per}} \,\, \leq\,\,    \max\left(\C(T_0),\C+\eta\right).
$$
This achieves the proof of $(i)$, $(ii)$, $(iii)$ and $(iv)$.

\end{proof}

\section{Application: stability and convergence of spectral methods}\label{sec6}
\setcounter{equation}{0}

In this section we consider the following spectral approximation of
(\ref{unperturb})
\begin{equation*}
\frac{\partial f_N}{\partial t} =\mathcal{P}_N\,Q^R(f_N,f_N),
\end{equation*}
where $\mathcal{P}_N$ denotes the orthogonal projection on $\PP_N$ in
$L^2(\mathcal{D}_L)$ (the space of trigonometric polynomials with
degree less at most $N$ in each direction).

The goal of this section is to prove the following theorem:
\begin{theorem}
\label{th:stab} 
Consider any non negative initial datum $f_0 \in
H^k_{per}(\mathcal{D}_L)$, with $k> d/2$, which is not zero
everywhere. Then there exists $N_0\in\NN$ (depending on the mass and
$H^k_{per}(\mathcal{D}_L)$ norm of $f$) such that for all $N\geq N_0$:

\begin{itemize}
\item[$(i)$] there is a unique global solution $f_N=f_N(t,\cdot)$ to the following problem
\begin{equation}
\label{spec:sol}
\left\{\begin{array}{l}
\displaystyle{\frac{\partial f_N}{\partial t}  \,=\, \mathcal{P}_N\,Q^R(f_N,f_N),}
\\
\,
\\
f_N(t=0) = \mathcal{P}_N f_0 ;
\end{array}\right.
\end{equation}
\item[$(ii)$] for any $p<k$, there exists $\C>0$ such that 
$$
\forall \, t \ge 0, \quad \| f_N (t, \cdot) \|_{H^k_{per}} \,\,\leq\,\,\C; 
$$ 

\item[$(iii)$] this solution is everywhere positive for time large
  enough, and the mass of its negative values can be made uniformly
  (in times) $L^\infty$ small as $N \to \infty$;

\item[$(iv)$] this solution $f_N$ converges to $f(t)$ the solution to
  (\ref{unperturb}) with the spectral accuracy, uniformly in time;
   
\item[$(v)$] this solution converges exponentially fast to a constant
  solution on the torus prescribed by the mass conservation law.
\end{itemize} 
\end{theorem}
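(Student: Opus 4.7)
The plan is to recast the Fourier--Galerkin scheme \eqref{spec:sol} as a smooth balanced perturbation of the periodized Boltzmann equation \eqref{unperturb} in the sense of Section~\ref{sec3}, and then invoke Theorem~\ref{th:existence}. I would set $\varepsilon := 1/N$, rewrite
\[
\frac{\partial f_N}{\partial t} \,=\, Q^R(f_N,f_N) \,+\, P_\varepsilon(f_N), \qquad
P_\varepsilon(f) \,:=\, -(\mathrm{Id} - \mathcal{P}_N)\,Q^R(f,f),
\]
and take as perturbed initial datum $f_{0,\varepsilon} := \mathcal{P}_N f_0$. Since $\mathcal{P}_N$ preserves the zeroth Fourier coefficient, this choice has the same mass as $f_0$, and $\|f_0 - \mathcal{P}_N f_0\|_{H^k_{per}} \to 0$ as $N\to\infty$ by a direct Parseval computation, providing the required function $\psi(\varepsilon)$.

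First I would verify the three abstract assumptions \eqref{hyp1:P}--\eqref{hyp2:P}--\eqref{hyp3:P}. The mass-balance condition \eqref{hyp1:P} is immediate from $\int Q^R(f,f)\,dv = 0$ together with the fact that $\mathcal{P}_N$ acts as the identity on constants, so $\int_{\mathcal{D}_L} P_\varepsilon(f)\,dv = 0$. For \eqref{hyp2:P}, the $L^2$-projection $\mathcal{P}_N$ has operator norm one on each $H^p_{per}(\mathcal{D}_L)$, so $\|P_\varepsilon(f)\|_{H^p_{per}} \le 2\|Q^R(f,f)\|_{H^p_{per}}$, and the right-hand side is controlled in the required bilinear form by the Leibniz expansion already exploited in the proof of Lemma~\ref{lem:l1hk} combined with Lemma~\ref{lmm4.0}. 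The smallness \eqref{hyp3:P} comes from the classical spectral truncation inequality
\[
\bigl\|(\mathrm{Id}-\mathcal{P}_N) g\bigr\|_{H^p_{per}} \,\le\, C\, N^{-(q-p)}\, \|g\|_{H^q_{per}}, \qquad q>p,
\]
applied to $g = Q^R(f,f)$: using the regularity-preserving bounds of Lemma~\ref{lem:l1hk}, one obtains $\|P_\varepsilon(f)\|_{H^p_{per}} \le C N^{-(q-p)} \|f\|_{H^q_{per}}^2$, which is an acceptable $\varphi(\varepsilon)$.

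Next I would verify the two remaining hypotheses of Theorem~\ref{th:existence}. Constants on the torus are equilibria of $Q^R$ (the integrand $f'f'_\star - ff_\star$ vanishes identically on them) and $\mathcal{P}_N$ fixes constants, hence they remain equilibria of the projected scheme. Nonlinear local stability of the constant equilibrium $m_\infty := \rho/|\mathcal{D}_L|$ for \eqref{unperturb} follows by combining the uniform-in-time regularity of Proposition~\ref{prop:reg}, the entropy--entropy production inequality of Proposition~\ref{prop:eep}, and the Csisz\'ar--Kullback inequality: an $H^p_{per}$-small initial perturbation of $m_\infty$ yields small relative entropy, which then decays, and an interpolation argument converts this back into $H^p_{per}$ control for all later times, independently of $\varepsilon$.

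With these verifications in hand, Theorem~\ref{th:existence} delivers a threshold $\varepsilon_0>0$, i.e.\ $N_0 := \lceil 1/\varepsilon_0\rceil$, beyond which one obtains a unique global $H^k_{per}$-smooth solution $f_N$ with uniform-in-time bounds, $L^\infty$-smallness of its negative part (eventually vanishing), convergence to the periodized solution $f(t)$ on bounded time intervals, and convergence to $m_\infty$ as $t\to\infty$; this immediately yields items $(i)$--$(iii)$ and $(v)$. For $(iv)$, the spectral rate is recovered by propagating the explicit bound $\varphi(\varepsilon) = C N^{-(k-p)}\|f\|_{H^k_{per}}^2$ through the Duhamel estimate of Proposition~\ref{lem:bdedtime}. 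The main obstacle I anticipate is obtaining this convergence \emph{uniformly in time}: the constants produced by the Gronwall loop in Lemma~\ref{lem:l1hk} grow exponentially, so a naive estimate degrades as $T\to\infty$. The rescue is precisely the trapping mechanism of Section~\ref{sec5}: the unperturbed solution enters the stability basin of $m_\infty$ in finite, $\varepsilon$-independent time (by Proposition~\ref{prop:eep}), after which the nonlinear stability of the equilibrium for the perturbed system freezes the error uniformly in $t\ge T_0$.
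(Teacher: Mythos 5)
Your overall strategy is the paper's: write the Galerkin scheme as $\partial_t f_N = Q^R(f_N,f_N) + P_N(f_N)$ with $P_N(f) = -(\mathrm{Id}-\mathcal{P}_N)Q^R(f,f)$, take $f_{0,\varepsilon}=\mathcal{P}_N f_0$, verify \eqref{hyp1:P}--\eqref{hyp2:P}--\eqref{hyp3:P} via the boundedness of $\mathcal{P}_N$ and the spectral truncation estimate, and invoke Theorem~\ref{th:existence}. Those verifications are essentially Lemma~\ref{lmm5.2} and are fine.

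There is, however, a genuine gap in how you discharge the stability hypothesis. You establish (via Propositions~\ref{prop:reg}, \ref{prop:eep} and Csisz\'ar--Kullback) stability of $m_\infty$ for the \emph{unperturbed} equation \eqref{unperturb} only, yet your own closing sentence correctly identifies that the trapping mechanism needs ``the nonlinear stability of the equilibrium for the perturbed system'': once $f_N(T_0)$ is close to $m_\infty$, it is the \emph{projected} dynamics \eqref{spec:sol} that must keep it there for all $t\ge T_0$. The entropy argument cannot supply this, because the projected equation has no $H$-theorem and its solution need not be nonnegative; and one must moreover ensure that the basin of attraction does not shrink as $N\to\infty$, otherwise the threshold $N_0$ cannot be chosen. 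The paper fills this hole with a separate linearized study (Lemma~\ref{prop:spec}): the operator $\mathcal{L}^{N,R}(f)=\mathcal{P}_N[Q^R(f,1)+Q^R(1,f)]$ is diagonalized in the Fourier basis, its eigenvalues $a_k$ are shown to be nonpositive with null space spanned by the constants, and the spectral gap $\lambda_N$ is shown to be bounded below uniformly in $N$ since $a_k\to a_\infty<0$. This is also the only source of the \emph{exponential} rate claimed in item $(v)$; the Cercignani-type inequality of Proposition~\ref{prop:eep} alone would not give it. You need to add this linearized spectral analysis (or an equivalent $N$-uniform stability statement for the projected equation) for the argument to close.
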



To prove Theorem~\ref{th:stab}, we want to apply
Theorem~\ref{th:existence} with the perturbation
$$
P_N^R(f_N) :=   \mathcal{P}_N\,Q^R(f_N,f_N) \,-\,Q^R(f_N,f_N),
$$
which preserves the mass:
$$
\int_{{\cal D}_L} P_N^R(f_N) \,dv \,=\, \int_{{\cal D}_L}
\left( \mathcal{P}_N\,Q^R(f_N,f_N) \,-\,Q^R(f_N,f_N) \right) \,dv\,=\,0,
$$ 

In the next Lemma, we prove a consistency and smoothness result for
this approximation.

\begin{lemma} 
  Consider a non negative function $f \in H_{per}^k(\mathcal{D}_L)$,
  with $k> d/2$, which is not zero everywhere. Then, there exists $\C
  >0$ depending only on the collision kernel $B$ and the truncation 
  such that for all $p\,\in\,[0,k]$ we have
  \begin{equation}
    \label{res:cons:1} 
    \| P_N^R(f) \|_{H^p_{per}} \,\,\leq\,\, \C\, \|f\|_{L^1}\, \| f\|_{H^p_{per}}.
  \end{equation}
  Moreover, 
  for all $p\,\in\,[0,k]$
  \begin{equation}
    \label{res:cons:2} 
    \| P_N^R(f) \|_{H^p_{per}} \,\,\leq\,\, \C \,\|f\|_{L^1}\, \frac{\| f\|_{H^k_{per}}}{N^{k-p}}.
  \end{equation}
  \label{lmm5.2}
\end{lemma}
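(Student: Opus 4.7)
The starting point is the identity
$$
P_N^R(f) \,=\, \mathcal{P}_N Q^R(f,f) - Q^R(f,f) \,=\, -(I - \mathcal{P}_N)\, Q^R(f,f),
$$
which reduces the question to combining a spectral approximation estimate on $(I - \mathcal{P}_N)$ with a Sobolev estimate on $Q^R(f,f)$. Using the Fourier characterization
$$
\|g\|_{H^p_{per}}^2 \,\sim\, \sum_{k \in \Z^d}\bigl(1+|k|^2\bigr)^{p} |\hat g_k|^2,
$$
and the fact that $(I-\mathcal{P}_N)$ is simply truncation onto frequencies $|k| > N$, one obtains the two standard bounds
$$
\|(I-\mathcal{P}_N)\,g\|_{H^p_{per}} \,\le\, \|g\|_{H^p_{per}}
\quad \text{and} \quad
\|(I-\mathcal{P}_N)\,g\|_{H^p_{per}} \,\le\, \frac{C}{N^{k-p}}\,\|g\|_{H^k_{per}}\quad(0\le p\le k),
$$
the latter coming from factoring out $(1+N^2)^{-(k-p)}$ on the tail. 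Applying the first with $g = Q^R(f,f)$ yields \eqref{res:cons:1} and the second yields \eqref{res:cons:2}, provided we control $\|Q^R(f,f)\|_{H^s_{per}}$ for $s = p$ and $s = k$ respectively.

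For the Sobolev bound on $Q^R(f,f)$, I rely on exactly the same mechanism as in the proof of Lemma~\ref{lmm4.1}: translation invariance of the truncated periodized operator gives the Leibniz-type identity
$$
\partial^\nu Q^R(f,f) \,=\, \sum_{\mu \le \nu} \binom{\nu}{\mu}\, Q^R(\partial^\mu f, \partial^{\nu-\mu} f),
$$
so that summing over $|\nu|\le s$ and applying Lemma~\ref{lmm4.0} with $p=2$ to each piece yields
$$
\|Q^R(f,f)\|_{H^s_{per}} \,\le\, \C(s,R,B)\, \|f\|_{L^1}\,\|f\|_{H^s_{per}}.
$$
The extreme terms ($\mu = 0$ or $\mu = \nu$) give directly $\|f\|_{L^1}\|f\|_{H^s_{per}}$ by Lemma~\ref{lmm4.0}; for the intermediate multi-indices $0<|\mu|<|\nu|$ I invoke the Sobolev embedding $H^k_{per}(\mathcal{D}_L)\hookrightarrow L^\infty(\mathcal{D}_L)$ (available since $k>d/2$) to control $\|\partial^\mu f\|_{L^1}$ by $\|f\|_{L^1}$ times lower-order Sobolev norms, which are themselves dominated by $\|f\|_{H^s_{per}}$. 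Combining these two ingredients closes \eqref{res:cons:1} and \eqref{res:cons:2}.

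The main (and only) technical point is the intermediate-derivative bookkeeping in the Leibniz sum: one must arrange the $L^1$ versus $L^2$ placement in Lemma~\ref{lmm4.0} so that a single $\|f\|_{L^1}$ factor survives while all remaining derivatives are absorbed into $\|f\|_{H^s_{per}}$. Once this is done, both inequalities follow immediately from the truncation properties of $\mathcal{P}_N$ described above, without any further structural input from the collision operator.
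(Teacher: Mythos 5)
Your proof takes essentially the same route as the paper's: write $P_N^R(f)=-(I-\mathcal{P}_N)\,Q^R(f,f)$, use the two standard spectral estimates for the projection error (the paper uses the triangle inequality plus $\|\mathcal{P}_N g\|_{H^p_{per}}\le\|g\|_{H^p_{per}}$, which is the same thing up to a factor $2$), and control $\|Q^R(f,f)\|_{H^s_{per}}$ via the Leibniz formula and Lemma~\ref{lmm4.0} with $p=2$, exactly as in Lemma~\ref{lmm4.1}.

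One caveat on the step you yourself single out as the technical point. For the intermediate Leibniz terms $Q^R(\partial^\mu f,\partial^{\nu-\mu}f)$ with $0<|\mu|<|\nu|$, your claim that $\|\partial^\mu f\|_{L^1}$ is controlled by ``$\|f\|_{L^1}$ times lower-order Sobolev norms'' is not correct as stated: on the bounded domain $\mathcal{D}_L$ one only gets $\|\partial^\mu f\|_{L^1}\le C_L\,\|\partial^\mu f\|_{L^2}\le C_L\,\|f\|_{H^{|\mu|}_{per}}$, and a factor $\|f\|_{L^1}$ cannot be peeled off a derivative norm. These terms therefore produce $\C\,\|f\|_{H^s_{per}}^2$ rather than $\C\,\|f\|_{L^1}\,\|f\|_{H^s_{per}}$. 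This is precisely what the paper's own Lemma~\ref{lmm4.1} establishes (its estimate \eqref{eq1:hk1} is quadratic in the $H^k_{per}$ norm), so the $\|f\|_{L^1}\,\|f\|_{H^p_{per}}$ form asserted in \eqref{resu:0} and in the statement of Lemma~\ref{lmm5.2} carries the same imprecision as your write-up; for the application to Theorem~\ref{th:stab} the quadratic Sobolev bound is just as serviceable, since the relevant norms are controlled anyway. Apart from this shared wrinkle, your argument matches the paper's.
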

\begin{proof}
First, we split the operator $P_N$ as
$$
\| P_N^R(f) \|_{H^p_{per}} \leq \| Q^R(f,f) \|_{H^p_{per}}  +  \| \mathcal{P}_N\,Q^R(f,f) \|_{H^p_{per}}.
$$
As in the proof of Lemma~\ref{lmm4.1} we get that for all $p\,\in [0,k]$
\begin{equation}
\label{resu:0}
\|Q^R(f,f)\|_{H^p_{per}}^2 \,\leq\,  C_p(R,B)\,\, \|f\|_{L^1}^2 \, \| f \|_{H^{p}_{per}}^{2}.
\end{equation}
Concerning the interpolation error estimate, the following result
holds. If $u\in H^p_{per}(\mathcal{D}_L)$ for some $p\geq 1$, then 
\begin{equation}
\label{error:est}
\| u - \mathcal{P}_N u \|_{H^p_{per}}  \leq \frac{\C}{N^{k-p}}\,\| u \|_{H^{k}_{per}}.
\end{equation}
Then, taking $p=k$ in the latter inequality and from (\ref{resu:0}) we
obtain
\begin{equation}
\label{resu:1}
\|\mathcal{P}_N Q^R(f,f)\|_{H^p_{per}}^2 \,\leq\, \|Q^R(f,f)\|_{H^p_{per}}^2 \,\leq\, C_p(R,B)\,\, \|f\|_{L^1}^2 \, \| f \|_{H^{p}_{per}}^{2}.
\end{equation}
Gathering (\ref{resu:0}) and (\ref{resu:1}), we finally get
$$
\| P_N^R(f) \|_{H^p_{per}}  \,\,\leq\,\,  C_p(R,B)\,\, \|f\|_{L^1} \, \| f \|_{H^{p}_{per}}.
$$
Moreover, using again the error estimate (\ref{error:est}), it yields
\begin{eqnarray*}
\| P_N^R(f) \|_{H^p_{per}}  &\leq& \C \,\frac{\|Q^R(f,f)\|_{H^k_{per}}}{N^{k-p}} 
\\
&\leq & C(R,B)\,\, \|f\|_{L^1} \, \frac{\| f \|_{H^{k}_{per}}}{N^{k-p}}.
\end{eqnarray*}
\end{proof}

 
Let us now perform a linearized study of the perturbed equation
(\ref{spec:sol}) by classical Fourier-basis decomposition.  The only
equilibrium distributions of the equation (\ref{unperturb}) are the
constant, prescribed by the mass conservation. Let us consider the
linearized version of the perturbed equation \eqref{spec:sol} around
such a constant $m_\infty$:
$$
\frac{\partial f}{\partial t}= m_\infty \, \cal{L}^{N,R} (f)
$$
where 
$$
\cal{L}^{N,R} (f) = \cal{P}_N \, \left[ Q^R(f,1) + Q^R(1,f) \right]. 
$$

Let us prove the following lemma:
\begin{lemma} 
\label{prop:spec}
The operator $\cal{L}^{N,R}$ is bounded and self-adjoint in
$L^2(\cal{D}_L)$. Moreover it is non-negative, its null space is given
by the constant functions, and it has a spectral gap $\lambda >0$.  As
a consequence, the constant are nonlinearly locally stable in any
$H^k_{per}(\cal{D}_L)$ for the equation \eqref{spec:sol}, with a
stability domain independent on $N$.
\end{lemma}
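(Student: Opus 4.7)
The plan is to treat $\mathcal{L}^{N,R}$ as the restriction to the finite-dimensional space $\PP_N$ of the bounded operator $L := Q^R(\cdot, 1) + Q^R(1, \cdot)$ on $L^2(\mathcal{D}_L)$, and to read off its spectral properties from the standard weak-form symmetrization. Boundedness of $L$ on $L^2(\mathcal{D}_L)$ follows from Lemma~\ref{lmm4.0} with $p=2$ (since $|\mathcal{D}_L|<\infty$ puts the constant function $1$ in $L^1\cap L^2$), and $\mathcal{P}_N$ is an $L^2$-orthogonal projection.

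For self-adjointness and the sign, I would linearize the weak form \eqref{eq:QRweak} around $f=1$: using the pre/post-collisional change of variables together with the $v\leftrightarrow v_\star$ symmetry yields
\[
\langle L f, \varphi\rangle_{L^2} = -\frac{1}{8}\int_{\mathcal{D}_L \times \mathcal{C}_R} \mathcal{B}(y,z)\,(f + f_\star - f' - f'_\star)(\varphi + \varphi_\star - \varphi' - \varphi'_\star)\,dv\,dy\,dz,
\]
manifestly symmetric in $(f,\varphi)$, so $L$ is self-adjoint on $L^2(\mathcal{D}_L)$ and $\mathcal{L}^{N,R}=\mathcal{P}_N L\mathcal{P}_N$ is self-adjoint on $\PP_N$; taking $\varphi=f$ shows that $-\mathcal{L}^{N,R}$ is a non-negative quadratic form. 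For the kernel, $Q^R(1,1)=0$ puts constants inside, and conversely vanishing of the form forces the collisional-invariant relation $f+f_\star=f'+f'_\star$ almost everywhere; a Fourier decomposition combined with the geometric richness of $\mathcal{C}_R$ (ensured by the standing assumption $R\ge\sqrt 2\,L$, so the truncation covers all directions on the torus) shows only the zero Fourier mode of $f$ survives, i.e.\ $f$ is constant.

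The crux of the proof is an $N$-independent spectral gap. The cleanest route is to linearize the Cercignani-type estimate of Lemma~\ref{lem:cerc}: for $f=1+\varepsilon g$ with $\int g\,dv=0$ one computes $H(f|1)=\tfrac{\varepsilon^2}{2}\|g\|_{L^2}^2+O(\varepsilon^3)$ and $D(f)=-2\varepsilon^2\langle Lg,g\rangle_{L^2}+O(\varepsilon^3)$, so the inequality $D(f)\ge K\,H(f|1)$ passes to
\[
-\langle Lg,g\rangle_{L^2}\ \ge\ \frac{K}{4}\,\|g\|_{L^2}^2 \qquad \text{for all zero-mean } g\in L^2(\mathcal{D}_L),
\]
with a constant independent of $N$. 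Since $\mathcal{P}_N$ preserves both the zero-mean subspace and $\PP_N$, this transfers directly to a uniform spectral gap $\lambda=K/4>0$ for $-\mathcal{L}^{N,R}$ on zero-mean elements of $\PP_N$.

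For the nonlinear local stability, decompose $f_N=m_\infty+g_N$ with $\int g_N\,dv=0$ by mass conservation; note that $\partial^\alpha g_N$ is also mean-zero for $|\alpha|\ge 1$ by integration by parts on the torus. Since $\mathcal{P}_N$, $L$ and $Q^R$ all commute with $\partial^\alpha$, differentiating \eqref{spec:sol}, pairing with $\partial^\alpha g_N$, summing over $|\alpha|\le k$ and combining the spectral gap above with the bilinear estimate of Lemma~\ref{lmm4.1} yields
\[
\frac{d}{dt}\|g_N\|_{H^k_{per}}^2 \ \le\ -2\lambda m_\infty\,\|g_N\|_{H^k_{per}}^2 + C\,\|g_N\|_{H^k_{per}}^3,
\]
with $N$-independent constants; a standard trapping/bootstrap argument then produces an $N$-independent stability radius $\eta\sim \lambda m_\infty/C$. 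The main obstacle throughout is precisely the $N$-uniformity of the spectral gap: without it, the bootstrap radius would shrink with $N$ and the stability conclusion would collapse.
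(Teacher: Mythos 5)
Most of your argument runs parallel to the paper's (boundedness, the symmetrized weak form giving self-adjointness and non-negativity of $-\cal{L}^{N,R}$, and the energy/bootstrap argument for nonlinear stability), but the step you yourself identify as the crux --- the $N$-independent spectral gap --- has a genuine gap. You derive it by linearizing Lemma~\ref{lem:cerc}, which is stated under the hypothesis $B \ge b_0 > 0$. The kernels actually covered by the paper satisfy \eqref{hyp:phi}, i.e.\ $\Phi(|v-v_\star|) = |v-v_\star|^\gamma$ with $\gamma > 0$, which vanishes on the diagonal $v = v_\star$, so Lemma~\ref{lem:cerc} does not apply to them. The statement that does apply, Proposition~\ref{prop:eep}, only gives $D(f) \ge K\, H(f\,|\,m_\infty)^{1+\eta}$ with $\eta>0$, and under your expansion $f = 1 + \varepsilon g$ its right-hand side is $O(\varepsilon^{2+2\eta})$ while the left-hand side is $O(\varepsilon^2)$; the linearization therefore yields only $-\langle Lg,g\rangle \ge 0$ and no gap. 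To rescue your route you would need an additional linearized estimate near the diagonal (e.g.\ bound the part of the Dirichlet form supported on $|v-v_\star|\le\delta$ by $C\,\delta^{d}\|g\|_{L^2}^2$ and absorb it into the gap of the lower-bounded part, mimicking the proof of Proposition~\ref{prop:eep} at quadratic order), which you do not supply.

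The paper sidesteps this entirely: by translation invariance the Fourier modes $e_k$ diagonalize $\cal{L}^{N,R}$, with eigenvalues
$a_k = -\tfrac14 \int_{\cal{D}_L\times\cal{C}_R} \big| (e_k)^\prime + (e_k)^\prime_\star - e_k - (e_k)_\star \big|^2 \, \cal{B}(y,z)\, dy\, dz\, dv \le 0$;
the null-space computation shows $a_k<0$ for $k\neq 0$, and dominated convergence gives $a_k \to -\int_{\cal{C}_R}\cal{B} < 0$ as $|k|\to\infty$, so $\inf_{k\neq 0}|a_k|>0$ furnishes the uniform gap for any locally integrable kernel, degenerate on the diagonal or not. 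This is both simpler and strictly more robust than the entropy-production route. I would also note that your identification of the null space (``Fourier decomposition combined with geometric richness'') is asserted rather than proved; the paper's version --- the collision-invariant relation $\phi^\prime+\phi^\prime_\star-\phi-\phi_\star=0$ forces the third derivatives of $\phi$ to vanish, and periodicity then forces $\phi$ to be constant --- is the concrete argument you would need there.
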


\begin{proof}
  The boundedness is trivial. Then, the periodized operator $Q^R$ is
  translation invariant, which implies that the Fourier modes
  $$
  e_k(v) = \frac{\exp \left[ i \, \frac{\pi}L \, ( k \cdot v ) \right]}{| \cal{D}_L |} 
  $$
  for $k \in \Z^d$ are trivially eigenfunctions of $\cal{L}^{N,R}$. This provides 
  a complete orthonormal eigenbasis in $L^2(\cal{D}_L)$. A trivial 
  computation yields 
  $$
\cal{L}^{N,R} (e_k) = a_k \, e_k \, {\bf 1}_{|k| \le N} \ \ \mbox{with } \  
a_k := - \int_{\cal{C}_R} \big[ 1 + e_k(y+z) - e_k(y) - e_k(z) \big] \, \cal{B}(y,z) \, dy \, dz.
$$  
In particular we deduce that $\cal{L}^{N,R} = \cal{L}^{N,R} \, \cal{P}_N$, 
and the self-adjointness comes from the following identity obtained 
by the usual changes of variables: 
\begin{multline*}
\int_{\cal{D}_L} \cal{L}^{N,R}(f) \, g \, dv 
= - \frac14 \, 
\int_{\cal{D}_L \times \cal{C}_R} \left[ \cal{P}_Nf ' +  \cal{P}_Nf^\prime_\star -  \cal{P}_Nf -  \cal{P}_Nf_\star \right] \\ 
    \times  \left[  \cal{P}_Ng^\prime +  \cal{P}_Ng^\prime_\star -  \cal{P}_Ng -  \cal{P}_Ng_\star \right] \, \cal{B}(y,z) \, dy \, dz \, dv.
\end{multline*}

Another formula for $a_k$ is readily deduced from the previous
representation:
$$
a_k = - \frac14 \, \int_{\cal{D}_L \times \cal{C}_R} \big|
(e_k)^\prime + (e_k)^\prime_\star - (e_k) - (e_k)_\star \big|^2 \,
\cal{B}(y,z) \, dy \, dz \, dv.
$$

One sees from the second representation that $a_k = a_{-k} \le 0$ for
any $k \in \Z^d$, and from the first representation and Lebesgue
theorem it is easily seen that for $|k| \to \infty$ the coefficients
$a_k$ converge as
$$
a_k \xrightarrow[|k|\to \infty]{} a_\infty =  - \int_{\cal{C}_R} \cal{B}(y,z) \, dy \, dz \ \in \, (-\infty,0).
$$

Hence we deduce that $a_k \in [a_\infty,0]$ for any $k \in \Z^d$, with
asymptotic convergence towards $a_\infty$ for large $k$. Moreover the
null space can be computed: for some smooth periodic function $\phi$,
the equation
$$
\phi^\prime + \phi^\prime_\star - \phi - \phi_\star = 0
$$
implies that the third-order derivative of $\phi$ is zero, and the
periodicity then imposes that it is a constant.  Thus the null space
is spanned by $e_0$. It concludes the proof of the
existence of a spectral gap
$$
\lambda_N := \min \big\{ |a_k|, \ k \in [|-N,N|]^d, \ k \not = 0 \big\} >0
$$
which is uniformly bounded from below as $N \to +\infty$, since 
$$
\lambda_N \to \lambda_\infty := \min \big\{ |a_k|, \ k \in \Z^d, \ k
\not = 0 \big\} >0.
$$

The non-linear stability in $L^2$ comes from the fact that for the perturbation 
$h = f - m_\infty$, we have the following control on the bilinear part: 
$$
\| \cal{P}_N Q^R (h,h) \|_{H^k_{per}} \le C_{\cal{B},R} \, \| h \|^2 _{L^2} 
$$
for some given constant $C_{\cal{B},R}>0$ independent on $N$. 

Finally using the eigenbasis of the Fourier modes, a similar study can
be performed in any Sobolev space $H^k_{per}(\cal{D}_L)$.
\end{proof} 

\Remark Exact computations could be made for particular physical
collision kernels $\cal{B}$ (in a similar way as the computation of
the kernel modes $\beta(l,m)$ in \cite{FiRu:FBE:03,FiRu:04,MP:03,FMP}).

\subsection{Proof of Theorem \ref{th:stab}}
Consider the numerical solution $f_N$ given by solving
(\ref{spec:sol}). We can formulate the problem as a perturbation of
the truncated Boltzmann equation. Indeed setting
$$
P_N(f_N) = -(Id - \mathcal{P}_N) Q^R(f_N,f_N), 
$$  
the problem (\ref{spec:sol}) can be written as
$$
\frac{\partial f_N}{\partial t}  = Q^R(f_N,f_N) + P_N(f_N).
$$
Then, applying Lemma~\ref{lmm5.2}, the perturbation $P_N$ satisfies
the assumptions of Theorem~\ref{th:existence}. 
Moreover since $f_0\,\in H^k_{per}(\mathcal{D}_L)$, we have
straightforwardly
$$
\| f_N(0) \|_{H^p_{per}} \leq \| f_0\|_{H^p_{per}}, \quad \| f_N(0) -
f_0 \|_{H^p _{per}} \to 0.
$$
Hence, we can directly apply Theorem~\ref{th:existence} to the
perturbation $P_N$, which proves that there exists $N_0$ large enough
and only depending on $f_0$, the kernel $B$ and the truncation, such
that for all $N\geq N_0$, the perturbed system admits a unique
uniformly smooth solution, which converges to a constant, and
satisfies all the points in Theorem~\ref{th:existence}.

\bigskip


\end{document}